\documentclass[a4paper, reqno, 12pt]{amsart}

\usepackage[usenames,dvipsnames]{color}
\usepackage{amsthm,amsfonts,amssymb,amsmath,amsxtra}
\usepackage[all]{xy}
\SelectTips{cm}{}
\usepackage{xr-hyper}
\usepackage{verbatim}

\usepackage[margin=1.25in]{geometry}
\usepackage{mathrsfs}

\RequirePackage{xspace}
\RequirePackage{etoolbox}
\RequirePackage{varwidth}
\RequirePackage{enumitem}
\RequirePackage{tensor}
\RequirePackage{mathtools}
\RequirePackage{longtable}
\RequirePackage{multirow}

\usepackage[
 citecolor=Red,
  linkcolor=Black,
   urlcolor=Blue]{hyperref}
\usepackage{cleveref}

\usepackage{tikz-cd}
\usepackage{tikz}
\usetikzlibrary{shapes.geometric, arrows}
\tikzstyle{block} = [thick, rectangle, font = \small, minimum width=3cm, minimum height=1cm, text centered, text width=3.5cm, draw=black]
\tikzstyle{bigblock} = [rectangle, dashed, minimum width=7.5cm, minimum height=10cm, draw=black]
\tikzstyle{arrow} = [ultra thick,->,>=stealth]
\tikzstyle{darrow} = [dashed,ultra thick,<->,>=stealth]
\tikzstyle{harrow} = [ultra thick,right hook->,>=stealth]

\def\ge{\geqslant}
\def\le{\leqslant}
\def\a{\alpha}
\def\b{\beta}

\def\l{\lambda}

\def\i{^{-1}}

\def\<{\langle}
\def\>{\rangle}

\newcommand{{\BG}}{\ensuremath{\mathbb {G}}\xspace}

\newcommand{{\BK}}{\ensuremath{\mathbb {K}}\xspace}

\newcommand{\BR}{\ensuremath{\mathbb {R}}\xspace}

\newcommand{\CB}{\ensuremath{\mathcal {B}}\xspace}

\newcommand{\CX}{\ensuremath{\mathcal {X}}\xspace}

\newcommand{\CZ}{\ensuremath{\mathcal {Z}}\xspace}

\def\JB{\prescript{J}{}{B}}
\def\JU{\prescript{J}{}{U}}
\def\oB{\mathring{\mathcal B}}
\def\oZ{\mathring{\mathcal Z}}
\def\IB{\prescript{I}{}{\tilde B}}

\newtheorem{theorem}{Theorem}
\newtheorem{proposition}[theorem]{Proposition}
\newtheorem{lemma}[theorem]{Lemma}
\newtheorem {conjecture}[theorem]{Conjecture}
\newtheorem{corollary}[theorem]{Corollary}

\theoremstyle{definition}
\newtheorem{definition}[theorem]{Definition}
\newtheorem{example}[theorem]{Example}

\newtheorem{remark}[theorem]{Remark}

\numberwithin{equation}{section}
\numberwithin{theorem}{section}

\setitemize[0]{leftmargin=*,itemsep=\the\smallskipamount}
\setenumerate[0]{leftmargin=*,itemsep=\the\smallskipamount}

\renewcommand{\to}{%
   \ifbool{@display}{\longrightarrow}{\rightarrow}%
   }
\let\shortmapsto\mapsto
\renewcommand{\mapsto}{%
   \ifbool{@display}{\longmapsto}{\shortmapsto}%
   }
\newlength{\olen}
\newlength{\ulen}
\newlength{\xlen}
\newcommand{\xra}[2][]{%
   \ifbool{@display}%
      {\settowidth{\olen}{$\overset{#2}{\longrightarrow}$}%
       \settowidth{\ulen}{$\underset{#1}{\longrightarrow}$}%
       \settowidth{\xlen}{$\xrightarrow[#1]{#2}$}%
       \ifdimgreater{\olen}{\xlen}%
          {\underset{#1}{\overset{#2}{\longrightarrow}}}%
          {\ifdimgreater{\ulen}{\xlen}%
             {\underset{#1}{\overset{#2}{\longrightarrow}}}
             {\xrightarrow[#1]{#2}}}}%
      {\xrightarrow[#1]{#2}}
   }
\makeatother
\newcommand{\xyra}[2][]{%
   \settowidth{\xlen}{$\xrightarrow[#1]{#2}$}%
   \ifbool{@display}%
      {\settowidth{\olen}{$\overset{#2}{\longrightarrow}$}%
       \settowidth{\ulen}{$\underset{#1}{\longrightarrow}$}%
       \ifdimgreater{\olen}{\xlen}%
          {\mathrel{\xymatrix@M=.12ex@C=3.2ex{\ar[r]^-{#2}_-{#1} &}}}%
          {\ifdimgreater{\ulen}{\xlen}%
             {\mathrel{\xymatrix@M=.12ex@C=3.2ex{\ar[r]^-{#2}_-{#1} &}}}
             {\mathrel{\xymatrix@M=.12ex@C=\the\xlen{\ar[r]^-{#2}_-{#1} &}}}}}%
      {\mathrel{\xymatrix@M=.12ex@C=\the\xlen{\ar[r]^-{#2}_-{#1} &}}}%
   }
\makeatletter
\newcommand{\xla}[2][]{%
   \ifbool{@display}%
      {\settowidth{\olen}{$\overset{#2}{\longleftarrow}$}%
       \settowidth{\ulen}{$\underset{#1}{\longleftarrow}$}%
       \settowidth{\xlen}{$\xleftarrow[#1]{#2}$}%
       \ifdimgreater{\olen}{\xlen}%
          {\underset{#1}{\overset{#2}{\longleftarrow}}}%
          {\ifdimgreater{\ulen}{\xlen}%
             {\underset{#1}{\overset{#2}{\longleftarrow}}}
             {\xleftarrow[#1]{#2}}}}%
      {\xleftarrow[#1]{#2}}
   }
\newcommand{\isoarrow}{%
   \ifbool{@display}{\overset{\sim}{\longrightarrow}}{\xrightarrow\sim}%
   }

\begin{document}

\title[]{Total positivity in twisted flag varieties}
\author[Xuhua He]{Xuhua He}
\address{Department of Mathematics and New Cornerstone Science Laboratory, The University of Hong Kong, Pokfulam, Hong Kong, Hong Kong SAR, China}
\email{xuhuahe@hku.hk}
\author{Kaitao Xie}
\address{Department of Mathematics and New Cornerstone Science Laboratory, The University of Hong Kong, Pokfulam, Hong Kong, Hong Kong SAR, China}
\email{kaitaoxie@connect.hku.hk}

\thanks{}

\keywords{Kac-Moody groups, flag varieties, double flag varieties, double Bruhat cells, total positivity}
\subjclass[2020]{14M15, 20G44, 15B48}


\begin{abstract}
Let $G$ be a Kac-Moody group, split over $\BR$. The totally nonnegative part of $G$ and its (ordinary) flag variety $G/B^+$ was introduced by Lusztig. It is known that the totally nonnegative parts of $G$ and $G/B^+$ have remarkable combinatorial and topological properties. 

In this paper, we consider the totally nonnegative part of the $J$-twisted flag variety $G/{}^J B^+$, where ${}^J B^+$ is the Borel subgroup opposite to $B^+$ in the standard parabolic subgroup $P_J^+$ of $G$. The $J$-twisted flag varieties include the ordinary flag variety $G/B^+$ as a special case. Our main result show that the totally nonnegative part of $G/{}^J B^+$ decomposes into cells, and the closure of each cell is a regular CW complex. This generalizes the work of Galashin-Karp-Lam \cite{GKL22} and the joint work of Bao with the first author \cite{BH24} for ordinary flag varieties. 

As an application, we deduce that the totally nonnegative part of the double flag variety $G/B^+ \times G/B^-$ with respect to the diagonal $G$-action has similar nice properties. We also establish some connections between the totally nonnegative part of the double flag with the canonical basis of the tensor product of a lowest weight module with a highest weight module of $G$. 

As another application, we show that the link of identity in a totally nonnegative reduced double Bruhat cell of $G$ is a regular CW complex. This generalizes the work of Hersh \cite{Her14} on the link of $U_{\geq0}^-$ and gives a positive answer to an open question of Fomin and Zelevinsky. 
\end{abstract}

\maketitle


\section{Introduction}

\subsection{Total positivity and remarkable polyhedral spaces}
The theory of total positivity in reductive groups and their flag varieties was introduced by Lusztig \cite{L94} and has been studied extensively in many areas including cluster algebras, higher Teichmüller theory, and theoretical physics. We refer to \cite{L20} and references therein for an overview.

We briefly recall the totally nonnegative flag variety. Let $G$ be a symmetrizable minimal Kac-Moody group that is split over $\mathbb R$. Let $B^+,B^-$ be a pair of Borel subgroups of $G$ that are opposite to each other, and let $W$ be the Weyl group of $G$. Lusztig \cite{L94, L20} defined a submonoid $G_{\geq0}$ of $G$, which he called the \textit{totally nonnegative monoid}.

Let $\CB^+ :=G/B^+$ be the (ordinary) flag variety. It admits a stratification $\CB^+ = \bigsqcup\oB_{v,w}^+$, where each $\oB_{v,w}^+$ is an intersection of a $B^-$-orbit with a $B^+$-orbit, and $v$ and $w$ are elements in the Weyl group $W$ indexing these orbits.

Lusztig defined the \textit{totally nonnegative flag variety} $\CB^+_{\geq0}$ as the Hausdorff closure of $G_{\geq0}B^+/B^+$ in $\CB(\mathbb R)$, and he introduced the decomposition $$\CB^+_{\geq0} = \bigsqcup (\oB_{v,w}^+\cap\CB^+_{\geq0}).$$ He called $\CB^+_{\geq0}$ a \textit{remarkable polyhedral space} without giving a precise definition. Based on later works, we provide the following definition: 

\begin{definition}
Let $X=\sqcup X_\a$ be a stratification of topological spaces over $\BR$ and $Y \subseteq X$ be a semi-algebraic subset. Set $Y_\a=Y \cap X_\a$. We say that $Y=\sqcup_\a Y_\a$ is a {\it remarkable polyhedral space} if for any $\a$: 
\begin{enumerate}
    \item (Connected components) $Y_\a$ is either empty or a connected component of $X_\a(\mathbb R)$;
    \item (Cell structure) If $Y_\a \neq \emptyset$, then $Y_\a \simeq\mathbb R_{>0}^{\dim_{\BR} X_\a}$ is a semi-algebraic cell;
    \item (Cellular decomposition) The Hausdorff closure $\overline{Y_\a}$ is the disjoint union of totally positive cells $\bigsqcup_{\b\leq\a} Y_\b$, where $\beta\leq\alpha$ if $X_\beta\subseteq\overline{X_{\alpha}}$.
   
\end{enumerate}

Moreover, we say that $Y$ has the {\it regularity property} if $Y$ is a regular CW complex, i.e., for every $\a$, there exists a homeomorphism from a closed ball to $\overline{Y_{\alpha}}$, which restricts to a homeomorphism from the interior of the ball to $Y_\alpha$.
\end{definition}

The following result summarizes some key properties for the totally nonnegative flag variety $\CB_{\geq0}^+$.

\begin{theorem}\label{thm:flag-main}
The totally nonnegative flag variety $\CB^+_{\geq 0}=\bigsqcup(\oB^+_{v, w}\cap\CB^+_{\geq0})$ is a remarkable polyhedral space with regularity property.
\end{theorem}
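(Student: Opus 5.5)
The plan is to assemble Theorem~\ref{thm:flag-main} from the established ingredients in the literature rather than proving it from scratch, organised by the three defining properties and the regularity property. Throughout I will use the Marsh--Rietsch type parametrization, in its Kac--Moody version. For $v\le w$ in $W$, fix a reduced expression $\underline{w}=s_{i_1}\cdots s_{i_n}$ of $w$ and the positive distinguished subexpression $\underline{v}_+$ for $v$ in $\underline{w}$. This gives a map
\[
\mathbb R_{>0}^{\ell(w)-\ell(v)}\to \oB^+_{v,w},
\]
built as a product of factors $y_{i_k}(t_k)$ at the positions not in $\underline{v}_+$ and $\dot s_{i_k}$ at the positions in $\underline{v}_+$, followed by projection to $G/B^+$. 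The first step is to show that this map is a semi-algebraic homeomorphism onto $\oB^+_{v,w}\cap\CB^+_{\geq0}$. That settles the cell structure (2). It also shows the cell is nonempty exactly when $v\le w$, and that it has the correct dimension $\dim\oB^+_{v,w}=\ell(w)-\ell(v)$.

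For the connected component property (1), I would follow Rietsch's argument. The image of the parametrization is contained in a connected component, since it is connected. Conversely, it is open and closed in $\oB^+_{v,w}(\mathbb R)$. Openness follows because it is a semi-algebraic cell of full dimension, via invariance of domain. Closedness inside the stratum follows from positivity of suitable generalized minors (or, in the Kac--Moody case, of matrix coefficients against canonical basis elements), which cut out $\CB^+_{\geq0}$ within $\oB^+_{v,w}(\mathbb R)$ by strict inequalities whose boundary lies in smaller strata.

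For the cellular decomposition (3), the inclusion of the closure of a cell in $\bigsqcup_{(v',w')\le(v,w)}$ is clear from the stratification. For the reverse inclusion I let some parameters $t_k$ tend to $0$ or $\infty$ and use the subword property of Bruhat order, which shows that every smaller positive cell is reached. This uses the standard poset $\{(v,w):v\le w\}$, ordered by $v\le v'\le w'\le w$.

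Regularity is the main obstacle. My first attempt would follow Galashin--Karp--Lam. I would embed the closure $\overline{Y_{v,w}}$ into a neighbourhood of a torus fixed point via a Bruhat-cell chart. There I would find a contractive flow, namely the action of a regular dominant one-parameter subgroup of the torus, combined with a positive unipotent element that preserves total nonnegativity. The aim is to produce a homeomorphism of the chart with $\mathbb R^N$ that scales the nonnegative part into a closed ball, with the boundary corresponding to the lower strata. Their criterion, which covers a smooth, totally nonnegative, contractive, star-shaped situation, then gives the closed-ball statement. The hard point is the Kac--Moody case, where $\CB^+$ is infinite dimensional and global charts are unavailable. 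There I would reduce to the finite-dimensional Schubert variety $\overline{B^+wB^+/B^+}$ intersected with an opposite cell. I would then use the Bao--He product structure, which identifies $\overline{Y_{v,w}}$ locally with a product of a totally nonnegative $U^-$-piece and a link, and apply the contractive-flow argument on that finite-dimensional piece. Interpolating between these is the step I expect to require the most care.
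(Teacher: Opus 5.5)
Properties (1) and (2) are fine if you cite the Marsh--Rietsch parametrization (Kac--Moody version in \cite{BH21a}). Closedness of $\CB^+_{\geq0}\cap\oB^+_{v,w}(\mathbb R)$ in the stratum is automatic, because $\CB^+_{\geq0}$ is closed by definition, so no minors are needed.

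The closure relation, however, is only asserted. Sending a parameter to $0$ deletes a letter, and for a fixed reduced word the letter you need may be occupied by $\underline v_+$. For example, in type $A_2$ with $\underline w=(s_1,s_2,s_1)$ and $v=s_1$ one has $\underline v_+=(1,1,s_1)$, so the cell for $(s_1,s_1s_2)$ is not obtained this way. You would have to show that a suitable reduced word always exists. Sending a parameter to $\infty$ requires pushing $x_i(t^{-1})$ through the remaining factors, which works only under positivity conditions of the kind in \Cref{springer_fiber_KL}.

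The paper quotes this theorem from \cite{BH24} and avoids such limits. The parametrization gives $\CB^+_{v',w',>0}\subseteq\dot rB^-B^+/B^+$ for $v'\le r\le w'$. Then the case $J=\emptyset$ of \Cref{alg_cut} and \Cref{preserve_connected_component} shows that $\overline{\CB^+_{v,w,>0}}\cap\oB^+_{v,r}=\sigma_{r,+}(\CB^+_{v,w,>0})$ is a connected component contained in $\CB^+_{\geq0}$, hence equal to $\CB^+_{v,r,>0}$. Everything else is then packaged in \Cref{thm:product}.

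The genuine gap is regularity. Write $Y_{v,w}=\CB^+_{v,w,>0}$. For $v<w$, no chart $\dot rB^-B^+/B^+$ contains $\overline{Y_{v,w}}$: such a chart contains only the fixed point $\dot rB^+$, while $\overline{Y_{v,w}}$ contains both $\dot vB^+$ and $\dot wB^+$. A torus flow preserves $\overline{Y_{v,w}}$, but it only identifies the part of $\overline{Y_{v,w}}$ in one chart with a cone over a link at the vertex $\dot rB^+$. A flow twisted by a positive unipotent element (the device that shows all of $\CB^+_{\geq0}$ is a ball in finite type) does not preserve the Richardson strata, so it says nothing about a single closed cell.

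Combining product structure with contractive flows, as you propose, yields only local information. By induction on dimension, the links and $\overline{Y_{v,w}}$ are compact topological manifolds with boundary, the boundary being $\overline{Y_{v,w}}\setminus Y_{v,w}$. What turns this into a closed ball is a global argument your plan never mentions, and it is exactly what your ``interpolating'' step would have to be:
\begin{enumerate}
\item By induction the boundary is a regular CW complex.
\item Its face poset with $\hat0,\hat1$ adjoined is the interval $[\hat0,[v,w]]$ in $\hat Q^{\emptyset}$, which is graded, thin and shellable by \Cref{Weyl_shellable}.
\item Hence the boundary is a sphere by Bj\"orner's theorem (\Cref{shellable_regular}).
\item Since $Y_{v,w}\cong\mathbb R_{>0}^{\ell(w)-\ell(v)}$ is an open ball, the generalized Poincar\'e conjecture with Brown's collar theorem (\Cref{thm:poincare}) makes $\overline{Y_{v,w}}$ a closed ball with interior $Y_{v,w}$.
\end{enumerate}
The same induction is already needed to show the links are balls. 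This is how \cite{GKL22} and \cite{BH24} conclude, and it is the argument reproduced in \S\ref{links}--\S\ref{general_regularity}. Contractive flows alone do not give the regularity property.
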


For reductive groups, Rietsch \cite{Ri99,Ri06} proved that $\CB_{\geq 0}^+$ is a remarkable polyhedral space, and Galashin, Karp and Lam \cite{GKL22} established the regularity property. For Kac-Moody groups, these statements were established by Bao and the first author in \cite{BH21a,BH24}.

\subsection{Motivations}
The motivation in this paper is to study the totally nonnegative double flag varieties and the totally nonnegative reduced double Bruhat cells.

The {\it double flag varieties} we consider is $G/B^+\times G/B^-$ and its {\it totally nonnegative part} is defined as
$$(G/B^+\times G/B^-)_{\geq0} = \overline{G_{\text{diag},\geq0}\cdot(B^+/B^+, B^-/B^-)}\ \ \text{(Hausdorff closure).}$$
The {\it double Bruhat cells} of a Kac-Moody group are the intersections of the $B^+\times B^+$-orbits and the $B^-\times B^-$-orbits on $G$. They play an important role in the theory of cluster algebras. The maximal torus $T$ of $G$ acts freely on each double Bruhat cell $G^{w,u}$ of $G$, and the quotient $L^{w,u}:= G^{w,u}/T$ is called a {\it reduced double Bruhat cell}. There is a natural embedding $$G/T \to G/B^+\times G/B^-,$$ which is compatible with total positivity.

The totally nonnegative double flag variety was first considered by Webster and Yakimov \cite{WY07}. They conjectured that it admits a cellular decomposition and verified $G=GL_2$ case. For many years, this was the only known case. In \cite{BH22}, Bao and the first author developed a thickening method and established the desired properties for another family of double flag varieties: $G/B^+ \times G/B^+$.  The key idea is to construct a larger Kac-Moody group $\tilde G$ and a sub-complex of $(\tilde G/\tilde B^+)_{\geq0}$ that forms a fiber bundle over $(G/B^+\times G/B^+)_{\geq0}$. When $G$ is of finite type, the totally nonnegative part of $G/B^+ \times G/B^+$ and $G/B^- \times G/B^+$ coincide. For general Kac–Moody groups, however, this identification fails, and the two double flag varieties are essentially different. The method of \cite{BH22} does not apply directly to the case $G/B^+ \times G/B^-$ beyond the finite-type cases, and a new approach is required.

To overcome this obstacle and to study the totally nonnegative double flag variety and totally nonnegative reduced double Bruhat cells for an arbitrary Kac–Moody group, we are led to a generalization of the ordinary flag variety—the \textit{twisted flag variety}—and to the investigation of its totally nonnegative part. The twisted setting provides the flexibility needed to apply a thickening argument to $G/B^+ \times G/B^-$ without the finite-type assumption. 

\subsection{Generalization to twisted flag variety}
Let $I$ be the index set of simple roots of $G$. For a subset $J\subseteq I$, we denote $W_J$ the corresponding parabolic subgroup of $W$, and $W^J$ the set of minimal representatives in the cosets in $W/W_J$. We also have a ``twisted" Bruhat order $\leq^J$, which, roughly speaking, is a mixture of the opposite Bruhat order in $W_J$ and the Bruhat order in $W^J$.

Let $P^+_J$ be the corresponding standard parabolic subgroup. Let $\JB^+$ be the opposite Borel subgroup of $B^+$ inside $P_J^+$. The {\it $J$-twisted flag variety} is defined to be $\CB^J := G/\JB^+$. When the Dynkin diagram of $J$ is not of finite type, $\CB^J$ is NOT $G$-equivariantly isomorphic to $\CB^+$.

The \textit{totally nonnegative $J$-twisted flag variety} is defined to be the Hausdorff closure of $G_{\geq0}\JB^+/\JB^+$ in $\CB^J(\mathbb R)$. In \S\ref{sec:J-twsited}, we define a stratification $\CB^J = \bigsqcup_{v \leq^J w}\oB^J_{v,w}$, and obtain the induced decomposition $$\CB^J_{\geq0} = \bigsqcup_{v \leq^J w} (\oB^J_{v, w}\cap\CB^J_{\geq0}).$$
Our first main result extends Theorem \ref{thm:flag-main} to the twisted setting.

\begin{theorem}\label{thm:J-flag}
The totally nonnegative $J$-twisted flag variety $\CB^J_{\geq 0}=\bigsqcup(\oB^J_{v,w}\cap\CB^J_{\geq0})$ is a remarkable polyhedral space with regularity property. 
\end{theorem}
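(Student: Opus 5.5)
We reduce Theorem \ref{thm:J-flag} to the ordinary case, Theorem \ref{thm:flag-main}, by a thickening argument in the spirit of \cite{BH22}. We build a larger Kac--Moody group $\tilde G$ whose flag variety $\tilde G/\tilde B^+$ contains a closed, $G$-stable union of strata that fibres over $\CB^J$. Concretely, we enlarge the Dynkin diagram by adjoining a copy $\tilde J$ of $J$ and identify the Borel subgroup ${}^J B^+$ with $U^-_J T U^{+,J}$, where $U^{+,J}$ is the unipotent radical of $P_J^+$. The goal is to find $\tilde w \in \tilde W$ and a parabolic $\tilde P$ such that:
\begin{itemize}
\item there is a $G$-equivariant map $\pi: \tilde G\cdot \tilde B^+\text{-stratum closure} \to \CB^J$, compatible with total positivity, i.e. $\pi$ maps $\tilde G_{\ge 0}$-orbits to $G_{\ge 0}\,{}^J B^+/{}^J B^+$;
\item the fibres of $\pi$ over the positive part are totally positive cells, namely a product of copies of $\mathbb R_{>0}$ coming from $U^-_{J,\ge0}$-type data.
\end{itemize}
An alternative, possibly cleaner, route identifies $\CB^J$ with a quotient of a Richardson-type subvariety of $\tilde G/\tilde B^+$ by a free unipotent action. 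Under this identification, the twisted order $\le^J$ on pairs $(v,w)$ becomes the ordinary Bruhat order on the corresponding pairs $(\tilde v,\tilde w)$ in $\tilde W$.

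The key steps, in order, are as follows.
\begin{enumerate}
\item Define the stratification $\oB^J_{v,w}$ as intersections of $B^-$- and $B^+$-orbits on $\CB^J$. Indexing the $B^\pm$-orbits on $G/{}^J B^+$ by $W$ via the twisted Bruhat decomposition gives the order $\le^J$.
\item Construct the embedding $\iota: W \to \tilde W$, of the form $x \mapsto x\cdot \tilde w_J$-twist, so that $\iota$ takes $\le^J$ to the Bruhat order on an interval and $\oB^J_{v,w}$ corresponds to $\oB^+_{\iota v,\iota w}$ up to a fibration.
\item Prove that the corresponding map of totally nonnegative parts is a homeomorphism $\oB^+_{\iota v,\iota w,\ge0}\cong \oB^J_{v,w,\ge0}$, or, if not, a trivial bundle with contractible fibre that extends continuously to closures. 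This uses Lusztig's parametrization of $U^\pm_{\ge0}$ and the stability of positivity under the relevant projections.
\item Transport the three remarkable-polyhedral-space axioms and the regularity property from Theorem \ref{thm:flag-main} applied to $\tilde G$. Connected components and the cell structure transfer directly. Closure relations follow from continuity of the map together with the order-preserving property of $\iota$. Regularity transfers once the map on the closure $\overline{\oB^+_{\iota v,\iota w,\ge0}}$ is a homeomorphism onto $\overline{\oB^J_{v,w,\ge 0}}$.
\end{enumerate}

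The main obstacle is step 3: proving that the comparison map is a homeomorphism on Hausdorff closures, not merely on open cells. When $J$ is not of finite type, $\CB^J$ is not $G$-isomorphic to $\CB^+$, and ${}^J B^+$ involves the infinite group $U_J^-$. It is therefore unclear a priori that limits of totally positive points in $\CB^J$ lift to limits in $\tilde G/\tilde B^+$. I would first attempt to prove this by a compactness argument: show the map is proper on the totally nonnegative parts, for instance by exhibiting it as a restriction of a projection between flag varieties of $\tilde G$, which is closed. Then injectivity on each cell, combined with the bijection of index sets, gives a continuous bijection of compact Hausdorff spaces, hence a homeomorphism. If properness fails, the fallback is to verify the Bj\"orner-type criterion of \cite{GKL22} directly, using a twisted version of the Fomin--Shapiro contraction.
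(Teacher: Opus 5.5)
There is a genuine gap. Everything rests on the comparison map of your steps 2--3, which is never constructed, and in the only case where the theorem has content it cannot exist in the form you propose.

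When $W_J$ is finite, the theorem is immediate from Theorem~\ref{thm:flag-main} via the stratified isomorphism $g\JB^+/\JB^+\mapsto g\dot w_{J,0}B^+/B^+$. Your map $x\mapsto x\tilde w_J$ is modelled on this. When $W_J$ is infinite, however, the definition of $\le^J$ gives $x\le^J e$ if and only if $x\in W_J$, so $e$ has infinitely many elements below it. In any Bruhat order every lower ideal $\{y\le \tilde y\}$ is finite. Hence no order-preserving injection $\iota:(W,\le^J)\to(\tilde W,\le)$ exists for any Coxeter group $\tilde W$; there is no substitute for $w_{J,0}$.

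The geometry reflects the same obstruction. The cell $\oB^J_w\supseteq\dot w^JB_J^+\dot w_J\JB^+/\JB^+$ is infinite-dimensional, being a finite-codimension Birkhoff cell in the ind-variety $L_J/B_J^-$. Richardson varieties of $\tilde G/\tilde B^+$, by contrast, are finite-dimensional, so your alternative route through a Richardson-type subvariety also fails globally. At best one could hope for a separate comparison for each closure $\overline{\CB^J_{v,w,>0}}$, but the proposal gives no construction and no reason such a map would respect total positivity. Note also that the paper uses thickening in the opposite direction: it embeds $G/B^+\times G/B^-$ into a \emph{twisted} flag variety $\tilde\CB^I$, because ordinary flag varieties of a thickened group only model $G/B^+\times G/B^+$.

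What is missing is an intrinsic argument on $\CB^J$ itself. The paper constructs the stratified product structure $\sigma^J_r:\dot r\,\JU^-\JB^+/\JB^+\cong\oB^J_r\times\oB^{J,r}$. It then proves (Theorem~\ref{thm:J_product}) that, for a connected component $Y_{v,w}$ of $\oB^J_{v,w}(\BR)$, one hypothesis suffices: $Y_{v',w'}\subseteq\dot r\,\JU^-\JB^+/\JB^+$ for all $v\le^J v'\le^J r\le^J w'\le^J w$. From this it derives the cell structure, the closure relations, and regularity, using contractive flows and links, shellability of $\le^J$-intervals, Bj\"orner's theorem, and Theorem~\ref{thm:poincare}.

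Your fallback, a ``twisted Fomin--Shapiro contraction'', points in this direction. But it omits exactly this inclusion property, and without it the links are not known to be manifolds with boundary. Verifying the inclusion is the real difficulty, because there is no twisted highest-weight theory. The paper proves it simultaneously with a Marsh--Rietsch-type parametrization $\CB^J_{v,w,>0}=G^J_{(v,w),>0}\JB^+/\JB^+$. It bootstraps from the case $v\in W_J$, $w\in W^J$, to the case $w\in W^J$, and then to the general case, using only the known positivity results on $G/P_J^+$ and on $L_J/B_J^-$.
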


To study the double flag variety $G/B^- \times G/B^+$, we construct an auxiliary larger Kac–Moody group $\tilde G$ and a twisted flag variety $\tilde \CB^{\tilde J}$ of $\tilde G$. Inside $\tilde \CB^{\tilde J}$, we define a subvariety $\tilde Z$ that forms a fiber bundle over $G/B^+ \times G/B^-$; the bundle projection is compatible with the stratifications and with total positivity. Applying Theorem \ref{thm:J-flag} to this setting yields our second main result.

\begin{theorem}\label{thm:double-flag}
 The totally nonnegative double flag variety $(G/B^+\times G/B^-)_{\geq0}$ is a remarkable polyhedral space with regularity property.
\end{theorem}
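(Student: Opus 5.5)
We derive Theorem \ref{thm:double-flag} from Theorem \ref{thm:J-flag} by a thickening construction, as outlined above.

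\textbf{Step 1: the thickened group.} Take the Cartan datum of $G$ on $I$ and a disjoint copy $I'$ of $I$. Build a larger symmetrizable Kac--Moody root datum on $\tilde I = I \sqcup I'$. We choose the off-diagonal entries between $I$ and $I'$ so that the parabolic Levi of $\tilde G$ attached to $\tilde J$ ($\tilde J=I$ or $I'$, whichever makes the twist interchange $B^+$ and $B^-$) contains $G$, and so that $\tilde B^+\cap G=B^+$, $\tilde B^-\cap G=B^-$, and ${}^{\tilde J}\tilde B^+\cap G=B^-$. We also need $\tilde G_{\geq0}\cap G = G_{\geq0}$, which follows from Lusztig's generators: the simple root subgroups and the positive torus of $G$ are among those of $\tilde G$.

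\textbf{Step 2: the subvariety $\tilde Z$.} Choose a point $\tilde w$, for instance the longest-type element or a Coxeter-type element in $W_{I'}$. Define $\tilde Z\subseteq \tilde\CB^{\tilde J}$ as a union of strata $\oB^{\tilde J}_{\tilde v,\tilde w'}$. Concretely, $\tilde Z$ should be the $G_{\mathrm{diag}}$-saturation of a Schubert-type subvariety through the base point. We then construct a $G$-equivariant projection $\pi:\tilde Z\to G/B^+\times G/B^-$, $g\,\dot{\tilde w}\,{}^{\tilde J}\tilde B^+\mapsto (gB^+,gB^-)$. The fibers should be affine spaces, namely unipotent orbits isomorphic to $U^\pm$-pieces.

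We check three properties.
\begin{enumerate}
\item $\pi$ is a locally trivial fiber bundle.
\item $\pi^{-1}$ of each stratum $\oB^+_{v,w}\times\oB^-_{v',w'}$ of the double flag, i.e.\ of each diagonal $G$-orbit intersected with $B^\pm\times B^\pm$ orbits, is a single stratum $\oB^{\tilde J}_{\tilde v,\tilde w}$ of $\tilde\CB^{\tilde J}$, with a compatible closure order.
\item $\pi(\tilde Z\cap\tilde\CB^{\tilde J}_{\geq0})=(G/B^+\times G/B^-)_{\geq0}$.
\end{enumerate}

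\textbf{Step 3: transfer of properties.} By Theorem \ref{thm:J-flag}, $\tilde\CB^{\tilde J}_{\geq0}$ is a regular CW complex whose cells are the positive parts of strata. Hence $\tilde Z_{\geq0}$ is a closed subcomplex. Each cell of $\tilde Z_{\geq0}$ maps to a cell of the double flag.

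Next we need a section. We aim to show that $\pi$ restricted to $\tilde Z_{\geq0}$ admits a continuous section $s$ whose image is a subcomplex $\tilde Z'_{\geq0}$, obtained by fixing the fiber coordinate at the base point, on which $\pi$ is a cell-preserving homeomorphism. Then the connected components, the cell structure $\mathbb R_{>0}^d$, the closure decomposition, and regularity all transfer directly. A fallback is to show that each positive cell of $\tilde Z$ is a product of a cell of the double flag with a positive cell $\mathbb R_{>0}^k$ of the fiber. In that case we need the closure $\overline{Y_\alpha}\times[\text{fiber ball}]$ structure compatibly and must quotient by the fiber, which is harder. The first approach is preferable.

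\textbf{Main obstacle.} The hardest part is item (3) together with compatibility of the positive structures. Specifically, we must show that the totally nonnegative part of $\tilde Z$ maps exactly onto the Hausdorff closure $\overline{G_{\mathrm{diag},\geq0}\cdot(B^+,B^-)}$, not onto something larger, and that the section lands in $\tilde\CB^{\tilde J}_{\geq0}$. The approach is to use Lusztig-style parametrizations of $\tilde G_{\geq0}$ by products of positive one-parameter subgroups for the cells. We would show first that $\pi$ is compatible on the open dense positive cell, and then pass to closures using properness of $\pi$ on the compact set $\tilde Z_{\geq0}$. The twist $\tilde J$ is exactly what makes $G\cap {}^{\tilde J}\tilde B^+=B^-$ while preserving positivity. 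This is why Theorem \ref{thm:J-flag}, rather than the ordinary Theorem \ref{thm:flag-main}, is needed beyond finite type.
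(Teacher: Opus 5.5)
Your overall strategy matches the paper: thicken $G$, realize a bundle over the double flag inside a twisted flag variety, and invoke Theorem~\ref{thm:J-flag}. But the proposal has genuine gaps in both load-bearing steps.

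\textbf{The construction is never actually specified.} The pieces you do specify are not the ones that work. The paper adds a single node $\infty$ with $\tilde a_{i\infty}=\tilde a_{\infty i}=-2$ and takes $\tilde J=I$, so that $G=\tilde L_I$ and ${}^I\tilde B^+\cap G=B^-$. It then uses $\tilde\CZ=\tilde P_I^-\IB^+/\IB^+\cap\tilde P_I^+(\tilde B^+\dot s_\infty\tilde B^+)\tilde P_I^+\IB^+/\IB^+$, whose points are $g_1y_\infty(a)g_2\IB^+/\IB^+$ with $g_1,g_2\in G$ and $a\neq0$.
\begin{itemize}
\item The fiber is $\mathbb C^\times$, with positive part $\mathbb R_{>0}$. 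It is not an affine unipotent piece.
\item The right-hand factor $g_2$ sweeps out all of $G/B^-$. Your rule sending $g\dot{\tilde w}\,{}^{\tilde J}\tilde B^+$ to $(gB^+,gB^-)$ only reaches the single diagonal orbit $G\cdot(B^+,B^-)\cong G/T$. However, $(G/B^+\times G/B^-)_{\geq0}$ meets every orbit $G\cdot(B^+,\dot vB^-)$.
\item The strata you must match are not the products $\oB^+_{v,w}\times\oB^-_{v',w'}$; the paper notes the model isomorphism is incompatible with these. They are the triples $\oZ^u_{w,v}$.
\item What makes the bundle stratified is the correspondence $\oZ^u_{w,v}\leftrightarrow\mathring{\tilde\CB}^I_{i(u),th(w,v)}$, together with the fact that $th(w,v)\le^I th(w',v')$ iff $w\le w'$ and $v\ge v'$. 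In other words, the twisted order reverses the $G$-orbit index.
\end{itemize}

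\textbf{The transfer step fails as stated.}
\begin{itemize}
\item $\tilde\CZ$ is only locally closed, so $\tilde\CZ\cap\tilde\CB^I_{\geq0}$ is neither a closed subcomplex nor compact. The closure of a cell $\tilde\CB^I_{i(u),th(w,v),>0}$ contains faces lying in $\tilde\CB^{I,th(1,w^{-1}*u)}_{\geq0}$ and in $\tilde\CB^I_{i(w\circ_lv),\geq0}$, and these lie outside $\tilde\CZ$.
\item Each cell of $\tilde\CZ_{\geq0}$ is $\mathbb R_{>0}\times\CZ^u_{w,v,>0}$. Hence the image of any section meets every cell in codimension one and is never a union of cells, so regularity does not transfer directly.
\end{itemize}
What the bundle does give comes from the $\alpha^\vee_\infty$-action, which makes $\overline{\tilde\CB^I_{i(u),th(w,v),>0}}\cap\tilde\CZ$ a principal $\mathbb R_{>0}$-bundle over $\overline{\CZ^u_{w,v,>0}}$ and an open subset of a closed ball. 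From this you get the cell structure, the closure relation, and that $\overline{\CZ^u_{w,v,>0}}$ is a topological manifold with boundary.

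To get a closed ball you still need $\partial\overline{\CZ^u_{w,v,>0}}$ to be a sphere. For this the paper uses an input absent from your plan: the face poset $\hat Q=\{(w,v,u):w\circ_lv\le u\}\sqcup\{\hat0\}$ is thin and EL-shellable. This is proved by embedding $Q$ as a convex subposet of the intervals of $(\tilde W,\le^I)$ and choosing a reflection order that puts $\tilde T\cap\tilde W_I$ first. Proposition~\ref{shellable_regular} and Theorem~\ref{thm:poincare} then finish an induction on dimension.

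Finally, your item (3) does not come from properness in the paper. It comes from the fact that every stratum of $\tilde\CZ$ lies in the closure of $\mathring{\tilde\CB}^I_{i(u),th(w,1)}$, whose positive part is $U^-_{w,>0}y_\infty(\mathbb R_{>0})U^+_{u,>0}\IB^+/\IB^+$. This is combined with the parametrization in Theorem~\ref{thm:J-flag-main}(1).
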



Since the embedding of $G/T$ into the double flag variety is is compatible with the stratifications and with total positivity, we also deduce
\begin{theorem}
The link of identity in a totally nonnegative reduced double Bruhat cell of a Kac-Moody group is a regular CW complex. In particular, it is homeomorphic to a closed ball.
\end{theorem}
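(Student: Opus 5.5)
The plan is to deduce the statement from Theorem~\ref{thm:double-flag} through the embedding $G/T \hookrightarrow G/B^+\times G/B^-$, $gT\mapsto (gB^+,gB^-)$. This map is compatible with the stratifications and with total positivity.

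First, identify the target cells. The reduced double Bruhat cell $L^{w,u}=G^{w,u}/T$ maps isomorphically onto an open subset of a stratum of the double flag variety. The relevant stratum consists of pairs $(gB^+,gB^-)$ with $gB^+\in B^+wB^+/B^+$ and $gB^-\in B^-uB^-/B^-$, intersected with the open condition that $gB^+$ and $gB^-$ are in opposite position. More precisely, I would use the diagonal $G$-action to move each point of $(G/B^+\times G/B^-)$ whose two components are in generic relative position to a pair $(hB^+,hB^-)$. Here $h$ is well defined modulo $T=B^+\cap B^-$, which gives an isomorphism of $G/T$ onto the open $G$-orbit. Total positivity matches: $(G/T)_{\geq0}$ is the image of $G_{\geq0}$, and it sits inside $\overline{G_{\text{diag},\geq0}\cdot(B^+/B^+,B^-/B^-)}$ as the part lying in the open orbit.

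Second, locate the identity. The identity $T\in L^{w,u}_{\geq0}$ maps to the base point $(B^+/B^+,B^-/B^-)$. This point is the unique point of the minimal (zero-dimensional) totally nonnegative cell in the stratification of $(G/B^+\times G/B^-)_{\geq0}$ indexed by the pair of trivial Weyl group elements. So the totally nonnegative part of $L^{w,u}$ is identified with the open star of this vertex inside the closed cell $\overline{Y_{w,u}}$: it is the union of the cells $Y_{v',u'}$ with $(v',u')\le (w,u)$ whose closures contain the base point. I would check that every cell below $Y_{w,u}$ whose closure contains the vertex lies in the open orbit. This follows from the relative-position description of the strata, since the opposite-position condition on the pair is the condition that the "$\min$" indices are trivial.

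Third, invoke the general topological fact used by Hersh and by Galashin--Karp--Lam. In a regular CW complex that is a closed ball, with a vertex $p$ whose open star is a cone neighborhood, the link of $p$ is a regular CW complex whose face poset is the interval above $p$ in the face poset, minus $p$. Furthermore, if the complex is a shellable/regular ball, the link is homeomorphic to a sphere or ball. To get an actual link (rather than an abstract one), I would use the $\mathbb R_{>0}$-action of a regular dominant cocharacter of $T$. This action contracts $L^{w,u}_{\geq0}$ to the identity. The link is then the quotient $(L^{w,u}_{\geq0}\setminus\{1\})/\mathbb R_{>0}$, or equivalently the intersection with a transversal sphere. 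Its cells are the quotients of the cells $Y_{v',u'}$ by $\mathbb R_{>0}$; their closures in the link are balls because $\overline{Y_{v',u'}}$ is a ball with the vertex on its boundary, and the flow is free and proper away from the fixed point. Regularity then follows cell by cell. Since the top cell $\overline{Y_{w,u}}$ is a ball and the vertex lies on its boundary, the link is a closed ball.

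The main obstacle is this last step: making the contraction argument rigorous. The cocharacter flow must be shown to extend continuously to the closures and to be transverse to a chosen sphere inside each closed cell. I would first try to find a positive $T$-weight function that is strictly monotone along the flow on each cell, using positive coordinates from Theorem~\ref{thm:J-flag} via the thickening. This approach follows Hersh's treatment of $U^-_{\geq0}$ and Fomin--Zelevinsky's question.
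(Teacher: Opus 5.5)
\textbf{There is a genuine gap, in your third step.} Your first two steps match the paper. It embeds $G/T$ into $\CZ$ by $gT\mapsto(g,B^-)$, so that $L^{w,u}_{>0}=\CZ^u_{w,1,>0}$ and $L^{w,u}_{\geq0}=\overline{\CZ^u_{w,1,>0}}\cap\CX$. This is exactly the open star of the vertex $\CZ^1_{1,1,>0}$, namely the cells with middle index $1$.

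The problem is the ``general topological fact'' you invoke, together with your justification that each closed link cell is a ball because the corresponding closed cell is a ball with the vertex on its boundary and the flow is free and proper. That inference does not hold in the topological category. A free proper contracting flow only gives $\overline{\CZ^{u'}_{w',1,>0}}\cap\CX\cong\operatorname{Cone}(L)$, i.e.\ $L\times\mathbb R_{>0}$ is homeomorphic to an open subset of a ball. But neither a cone nor a product with $\mathbb R$ determines its base up to homeomorphism. For instance, by the double suspension theorem the open cone on the suspension of the Poincar\'e homology sphere is $\mathbb R^5$. Similarly, $M\times\mathbb R\cong\mathbb R^4$ for the Whitehead manifold $M$.

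So neither the fact that the closed cell is a ball, nor shellability of the combinatorial link, tells you that the geometric link is a ball. It does not even tell you that its strata are open cells. Your sentence ``since the top cell is a ball and the vertex lies on its boundary, the link is a closed ball'' is essentially the statement to be proved.

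\textbf{What is missing is the paper's inductive argument.} This is the same scheme as in Proposition~\ref{link_regularity}, and the paper runs it by induction on $\ell(w)+\ell(u)$.
\begin{enumerate}
\item The open link cells $Lk^{w',u'}_{>0}\cong\mathbb R_{>0}^{\ell(w')+\ell(u')-1}$ come from the explicit parametrization in Theorem~\ref{Z_parametrization}, not from an abstract quotient by the flow.
\item By Theorem~\ref{Z_total_positivity}, $\overline{\CZ^u_{w,1,>0}}$ is a manifold with boundary. Since $\CX$ is open in $\CZ$, the paper deduces that $Lk^{w,u}_{\geq0}$ is a compact manifold with boundary $\bigsqcup_{(w',u')<(w,u)}Lk^{w',u'}_{>0}$.
\item By induction this boundary is a regular CW complex. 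Its face poset, with $\hat0$ and $\hat1$ adjoined, is $[1,w]\times[1,u]$, which is graded, thin and shellable. Proposition~\ref{shellable_regular} then makes the boundary a sphere.
\item The interior is an open ball, so Theorem~\ref{thm:poincare} (generalized Poincar\'e conjecture plus Brown's collar theorem) gives a closed ball.
\end{enumerate}
Shellability and the Poincar\'e-type theorem are the essential inputs absent from your plan. By contrast, the issue you single out as the main obstacle (extending the flow to closures and transversality to a sphere) is the routine part. It is handled by the contractive-flow lemma of Galashin--Karp--Lam, as in \S\ref{links}.
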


This generalizes the results of Hersh \cite{Her14} on the link of identity in the totally nonnegative unipotent monoid $U^-_{\geq0}$, and gives a positive answer to an open problem of Fomin and Zelevinsky (see \cite[Conjecture 10.2 (1)]{GKL22}).

\smallskip

Below we outline the relationships among the main objects discussed above.

\begin{center}
\begin{tikzpicture}
\node [block] at (-2,0) (FlagFinite){$\CB^+_{\geq0}$ finite types \cite{GKL22}};
\node [block] at (2.5,0) (FlagKM){$\CB^+_{\geq0}$ general types \cite{BH24}};
\node [block] at (7,0) (twisted){$\CB^J_{\geq0}$ general types};
\node [block] at (2.5,3) (++){$(G/B^+\times G/B^+)_{\geq0}$ general types \cite{BH22}};
\node [block] at (10,3) (+-){$(G/B^+\times G/B^-)_{\geq0}$ general types};
\node [block] at (-2,-3) (LinkFinite){Links in $U^-_{\geq0}$ finite types \cite{Her14}};
\node [block] at (2.5,-3) (LinkKM){Links in $U^-_{\geq0}$ general types \cite{BH24}};
\node [block] at (10,-3) (LinkDouble){Links in $G_{\geq0}$ general types}; 
\node [bigblock] at (8.5,0) (new){};

\draw [harrow] (FlagFinite) -- (FlagKM);
\draw [harrow] (LinkFinite) -- (FlagFinite);
\draw [harrow] (FlagKM) -- (twisted);
\draw [arrow] (FlagKM) -- (++);
\draw [darrow] (++) --  (+-);
\draw [harrow] (LinkFinite) -- (LinkKM);
\draw [harrow] (LinkKM) -- (LinkDouble);
\draw [harrow] (LinkKM) -- (FlagKM);
\draw [arrow] (twisted)--(+-);
\draw [harrow] (LinkDouble)--(+-);
\end{tikzpicture}
\end{center}

In this diagram, $\hookrightarrow$ indicates that the topological property of the object at the tail is a special case of the topological property of the object at the head; $\rightarrow$ indicates that the topological property of the object at the head can be deduced from the topological property of the object at the tail.
Note that the two double flag varieties $G/B^+\times G/B^+$ and $G/B^+\times G/B^-$ coincide when $G$ is of finite type.
The main results of this paper establish the topological properties for the three objects enclosed in the dotted box.

\subsection{Difficulty and strategy}
The fundamental obstacle in extending total positivity to twisted flag varieties, compared with the ordinary ($J=\emptyset$) case, is the absence of a representation-theoretic interpretation for twisted flag varieties. In the ordinary case, one first uses the embedding of the flag variety into the projective space of a highest weight module and the associated canonical basis to establish the Marsh–Rietsch parametrization of totally positive cells \cite{MR04,BH21a}. This explicit parametrization immediately implies that each totally positive cell lies inside certain ``big cells'' of the flag variety, a key geometric property that allows the product structure method of \cite{BH24} to be applied. The product structure then reduces the study of the closure of a cell to the study of two smaller cells, leading to an inductive proof of the closure relation and the regularity theorem.

For a twisted flag variety with $J$ not of finite type, no analogous extreme‑weight theory is available, and the parametrization cannot be established a priori. Consequently, one cannot separate the parametrization step from the geometric analysis; the existence of a suitable parametrization and the verification of the required geometric properties (notably the containment in certain ``big cells'') must be proved simultaneously.

Our strategy overcomes this entanglement by a carefully designed induction. We first extend the method of product structure to the twisted setting. Then we introduce explicit candidate parametrizing sets for the totally positive cells. Starting with a restricted family of strata (those with $v \in W_J$ and $w \in W^J$), we prove that the candidate sets for these strata are indeed the totally positive cells and satisfy the required inclusion property. Using the product structure, we then extend these conclusions to a larger family (for $v \in W$ and $w \in W^J$), and finally to arbitrary pairs $(v, w)$. At each stage the product structure allows us to propagate the parametrization and the geometric properties to the next stage. In this way the parametrization and the product structure are built together, culminating in the full proof of Theorem \ref{thm:J-flag}.

\subsection{Representation-theoretic interpretation of double flag varieties}
Although we do not have a representation-theoretic interpretation of the twisted flag varieties, we have a natural connection between the double flag varieties and representation theory. 

In this subsection, we assume that $G$ is simply-laced. Let $X^{++}$ be the set of regular dominant weights of $G$. For $\lambda\in X^{++}$, let $\Lambda$ be the simple highest weight module of $G$ associated with a regular dominant weight, and $\eta_\lambda$ be a highest weight vector. Let $\mathbb P(\Lambda_\lambda)$ be the projective space of $\Lambda_\lambda$. For $v\in\Lambda_\lambda$ with $v\neq0$, denote $[v]\in\mathbb P(\Lambda_\lambda)$ the corresponding line in the projective space. We have the natural embedding
$$\CB^+\to\mathbb P(\Lambda_\lambda),\quad gB^+/B^+\to [g\eta_\lambda].$$
We identify $\CB^+$ with its image in $\mathbb P(\Lambda_\lambda)$.

Lusztig’s theory of canonical bases for quantum groups \cite{L92a} provides a basis $\mathbf B(\Lambda_\lambda)$ of $\Lambda$. Let $\mathbb P(\Lambda_\lambda)_{\geq0}$ be the set of lines spanned by a vector whose coordinates with respect to $\mathbf B(\Lambda)$ are all nonnegative. For reductive groups Lusztig \cite[Proposition 8.17]{L94} and for Kac–Moody groups Bao and He \cite{BH21a} proved that
$$\CB^+_{\geq0} = \CB^+\bigcap\mathbb P(\Lambda_\lambda)_{\geq0}.$$
This equality gives a representation‑theoretic characterization of the totally nonnegative flag variety.

It is natural to ask whether the totally nonnegative double flag variety admits a similar description.

For $\lambda\in X^{++}$, let $^{\omega}\Lambda_\lambda$ be a simple lowest weight module of $G$ associated to $\lambda$, and let $\xi_{-\lambda}$ be a lowest weight vector. For $\lambda_1,\lambda_2\in X^{++}$, we consider the tensor product $^{\omega}\Lambda_{\lambda_1}\otimes\Lambda_{\lambda_2}$ and its projective space $\mathbb P(^\omega\Lambda_{\lambda_1}\otimes\Lambda_{\lambda_2})$. There is a natural embedding
$$G/B^+\times G/B^-\to\mathbb P(^\omega\Lambda_{\lambda_1}\otimes\Lambda_{\lambda_2}),\ \ (g_1B^+/B^+,g_2B^-/B^-)\to [g_2\xi_{-\lambda_1}\otimes g_1\eta_{\lambda_2}].$$
We identify $G/B^+\times G/B^-$ with its image in $\mathbb P(^\omega\Lambda_{\lambda_1}\otimes\Lambda_{\lambda_2})$. Lusztig \cite{L92b}  introduced the canonical basis $\mathbf B(^\omega\Lambda_{\lambda_1}\otimes\Lambda_{\lambda_2})$ for  $^\omega\Lambda_{\lambda_1}\otimes\Lambda_{\lambda_2}$.

\begin{conjecture} \label{conjecture_canonical_basis}
Let $\mathbb P(^\omega\Lambda_{\lambda_1}\otimes\Lambda_{\lambda_2})_{\geq0}$ be the set of lines spanned by a vector whose coordinates with respect to $\mathbf B(^\omega\Lambda_{\lambda_1}\otimes\Lambda_{\lambda_2})$ are all nonnegative. Then
$$(G/B^+\times G/B^-)_{\geq0} = (G/B^+\times G/B^-)\bigcap\mathbb P(^\omega\Lambda_{\lambda_1}\otimes\Lambda_{\lambda_2})_{\geq0}.$$
\end{conjecture}

In a recent joint work of the first author with Fang \cite{FH25}, some positivity property of $\mathbf B(^\omega\Lambda\otimes\Lambda)$ was established. Using these results, we prove one inclusion of the conjecture in \S\ref{sec:canonical_basis}:
$$(G/B^+\times G/B^-)_{\geq0}\subseteq \mathbb P(^\omega\Lambda\otimes\Lambda)_{\geq0}.$$ 

\subsection{Acknowledgment}
We thank Huanchen Bao for many fruitful discussions on total positivity. The idea of using the twisted flag varieties to the study of double flag varieties arose from discussions with him. XH is partially supported by the New Cornerstone Science Foundation through the New Cornerstone Investigator Program and the Xplorer Prize, as well as by the Hong Kong RGC grant 14300023. KX is partially supported by  partially supported by the New Cornerstone Science Foundation through the New Cornerstone Investigator Program awarded to XH. This manuscript was prepared with the assistance of AI tools to improve linguistic readability.

\section{Preliminary}

\subsection{Kac-Moody groups}
Let $I$ be a finite set, and let $A = (a_{ij})_{i,j\in I}$ be a symmetrizable generalized Cartan matrix. A {\it Kac-Moody datum} associated with $A$ is a  sextuple $\mathcal D = (I,A,X,Y,(\alpha_i)_{i\in I}, (\alpha^\vee_i)_{i\in I})$ where $X$ and $Y$ are two free $\mathbb Z$-modules dual to each other,  and the simple roots $\alpha_i\in X$ and simple coroots $\alpha_j^\vee\in Y$ satisfy $\langle\alpha_i,\alpha^\vee_j\rangle = a_{ij}$ for all $i,j\in I$. The {\it minimal Kac-Moody group} $G$ over $\mathbb C$ associated with $\mathcal D$ is the split group over $\mathbb C$ generated by the split torus $T$ associated with $Y$ and the root groups $U_{\pm\alpha_i}$ for all $i\in I$, subject to the Tits relation. Let $U^\pm\subseteq G$ be the subgroup generated by all $U_{\pm\alpha_i}$, and let $B^\pm\subseteq G$ be the Borel subgroups generated by $T$ and $U^\pm$, respectively. For every $i\in I$, we fix isomorphisms $x_i:\mathbb R\to U_{\alpha_i}$, $y_i:\mathbb R\to U_{-\alpha_i}$ such that the map
$$\begin{pmatrix} 1 & a \\ 0 & 1 \end{pmatrix}\mapsto x_i(a), \begin{pmatrix} 1 & 0 \\ b & 1 \end{pmatrix}\mapsto y_i(b),
\begin{pmatrix} c & 0 \\ 0 & c^{-1} \end{pmatrix}\mapsto \alpha^\vee_i(c), a,b\in\mathbb C, c\in\mathbb C^*,$$
defines a homomorphism $SL_2\to G$. The datum $(T,B^+,B^-,x_i,y_i;i\in I)$ is called a {\it pinning} for $G$. Denote by $\Phi = \Phi^+\sqcup\Phi^-$ the system of real roots of $G$. 

Let $W$ be the {\it Weyl group} of $G$. For $i\in I$, let $s_i\in W$ be the corresponding simple reflection, and we write $\dot s_i = x_i(1)y_i(-1)x_i(1)\in G$. 

For any $w \in W$, choose a {\it reduced expression} $w = s_{i_1} s_{i_2} \cdots s_{i_n}$. The {\it length} of $w$, denoted $\ell(w)$, is defined to be $n$. We define the lift $\dot{w} \in G$ by $\dot{w} = \dot{s}_{i_1} \dot{s}_{i_2} \cdots \dot{s}_{i_n}$. This definition is independent of the choice of reduced expression.

\subsection{Flag varieties}\label{sec:flag}
Denote $\mathcal B^+ := G/B^+$ the {\it (thin) flag variety}, equipped with the ind-variety structure. For $v,w\in W$, define
\begin{itemize}
    \item The {\it Schubert cell} $\oB^+_w := B^+\dot wB^+/B^+$ corresponding to $w$, 
    \item The {\it opposite Schubert cell} $\oB^{+,v} := B^-\dot vB^+/B^+$ corresponding to $v$, 
    \item The {\it open Richardson variety} $\oB^+_{v,w} := \oB^+_w\cap \oB^{+,v}$ corresponding to $(v,w)$.
\end{itemize}

It follows from \cite{KL79} and \cite{Kum17} that 

(a) $\oB^+_{v,w}\neq\emptyset$ if and only if $v\leq w$ in the Bruhat order.

(b) For $v \leq w$, $\dim \oB^+_{v,w}=\ell(w)-\ell(v)$.

We have the following stratifications
$$\mathcal B^+ = \bigsqcup_{w\in W}\oB_w^+ = \bigsqcup_{v\in W}\oB^{+,v} = \bigsqcup_{v\leq w}\oB^+_{v,w}.$$
Let $\mathcal B_w^+$, $\mathcal B^{+,v}$, and $\mathcal B^+_{v,w}$ be the Zariski closures of $\oB^+_w$, $\oB^{+,v}$ and $\oB^+_{v,w}$, respectively. We have

(c) $\mathcal B^+_w = \bigsqcup_{w'\leq w}\oB^+_{w'},\  \mathcal B^{+,v} = \bigsqcup_{v'\geq v}\oB^{+,v'},\ \mathcal B^+_{v,w} = \bigsqcup_{v\leq v'\leq w'\leq w}\oB^+_{v',w'}.$

\subsection{$J$-twisted flag varieties} \label{sec:J-twsited}
Let $J\subseteq I$. Let $P_J^+$ be the {\it parabolic subgroup} of $G$ generated by $B^+$ and the root groups $U_{-\alpha_i}$ for $i\in J$. Let $P_J^-$ be the {\it opposite parabolic subgroup} of $G$ generated by $B^-$ and the the root groups $U_{\alpha_i}$ for $i\in J$. Let $L_J = P_J^+\cap P_J^-$, and $U_{P_J^\pm}$ be the unipotent radical of $P_J^\pm$. We have the Levi decomposition $P_J^\pm = L_J\ltimes U_{P_J^\pm}$. We denote by $\Phi_J = \Phi_J^+\sqcup\Phi_J^-$ the system of real roots of $L_J$.

For $J\subseteq I$, let $W_J\subseteq W$ be the corresponding parabolic subgroup, and $W^J$ be the set of all minimal representatives for the cosets in $W/W_J$. Every element $w\in W$ has a unique decomposition $w = w^J w_J$ with $w^J\in W^J$ and $w_J\in W_J$. 

We define the {\it $J$-length} of $w$ by $$\ell^J(w):=\ell(w^J)-\ell(w_J).$$ We also define the {\it $J$-Bruhat order} $\leq^J$ as follows: for $v,w\in W$, we say $v\leq^J w$ if there is $u\in W_J$ such that $$v^Ju\leq w^J \text{ and } w_J\leq u^{-1}v_J,$$ where $\leq$ denotes the usual Bruhat order on $W$. 

We consider the groups $$B^\pm_J  =L_J\cap B^\pm, \quad \JB^+ = B_J^-\ltimes U_{P_J^+}, \quad \JB^- = B_J^+\ltimes U_{P_J^-}.$$ Note that $\JB^+$ is the opposite Borel subgroup in the standard parabolic subgroup $P^+_J$ of $G$. Let $\JU^\pm$ denote the unipotent radicals of $\JB^\pm$.

Denote by $\mathcal B^J := G/\JB^+$ the {\it $J$-twisted flag variety}, equipped with the ind-variety structure. For $v,w\in W$, define
\begin{itemize}
    \item The {\it $J$-Schubert cell} $\oB^J_w := B^+\dot w\JB^+/\JB^+$ corresponding to $w$, 
    \item The {\it opposite $J$-Schubert cell} $\oB^{J,v} := B^-\dot v\JB^+/\JB^+$ corresponding to $v$, 
    \item The open $J$-twisted Richardson variety $\oB^J_{v,w} := \oB_w^J\cap \oB^{J,v}$ corresponding to $(v,w)$.
\end{itemize}

When $J=\emptyset$, we have $\JB^+ = B^+$; when $J=I$, we have $\JB^+ = B^-$. The twisted flag $\mathcal B^J$ is a generalization of the ordinary flag variety $\mathcal B^+$.

Let $\mathcal B^J_w$ and $\mathcal B^{J,v}$ denote the respective Zariski closures of $\oB^J_w$ and $\oB^{J,v}$, respectively. By \cite[Theorem 4]{BD94}, 
$$\mathcal B^J_w = \bigsqcup_{w'\leq^J w}\oB_{w'}^J,\ \mathcal B^{J,v} = \bigsqcup_{v\leq^J v'}\oB^{J,v'}.$$ 

We will show in \Cref{nonempty_pattern} that $\oB^J_{v, w} \neq \emptyset$ if and only if $v \leq^J w$.

\subsection{Totally nonnegative flag varieties: ordinary and twisted}
Let $U^+_{\geq0}$ be the submonoid of $G$ generated by $x_i(a)$ for $i\in I$ and $a\in\mathbb R_{>0}$, and $U^-_{\geq0}$ be the submonoid of $G$ generated by $y_i(a)$ for $i\in I$ and $a\in\mathbb R_{>0}$. Let $T_{>0}$ be the identity component of $T(\mathbb R)$. The {\it totally nonnegative monoid} $G_{\geq0}$ is defined to be the submonoid of $G$ generated by $U^\pm_{\geq0}$ and $T_{>0}$. It is known that $$G_{\geq0} = U_{\geq0}^+T_{>0}U^-_{\geq0} = U^-_{\geq0}T_{>0}U^+_{\geq0}.$$

The {\it totally nonnegative flag variety} $\mathcal B^+_{\geq0}$ is defined to be the Hausdorff closure $\overline{G_{\geq0}B^+/B^+}$ of $G_{\geq0}B^+/B^+$ in $\CB^+(\mathbb R)$. Since $T_{>0}U_{\geq0}^+\subseteq B^+$, we also have $\mathcal B^+_{\geq0} = \overline{U^-_{\geq0}B^+/B^+}$. 

In analogy to the totally nonnegative flag variety, we define the {\it totally nonnegative $J$-twisted flag variety} $\mathcal B^J_{\geq0}$ to be the Hausdorff closure $\overline{G_{\geq0}\JB^+/\JB^+}$ of $G_{\geq0}\JB^+/\JB^+$ in $\mathcal B^J(\mathbb R)$. 

\begin{example}
Suppose $W_J$ is finite. Let $w_{J,0}$ be the unique longest element in $W_J$. The morphism $\varphi:\mathcal B^J\to\mathcal B^+$ defined by $\varphi(g\JB^+/\JB^+) = g\dot w_{J,0}B^+/B^+$ is a $G$-equivariant isomorphism. We have
$$\varphi(G_{\geq0}\JB^+/\JB^+)=G_{\geq0}\dot w_{J,0}B^+/B^+ = G_{\geq0}U_{w_{J,0},>0}^+\dot w_{J,0}B^+/B^+ = G_{\geq0}B^+/B^+.$$
The last equality follows from Lusztig's duality \cite[Theorem 8.7]{L94}: $$U^+_{w_{J,0},>0}\dot w_{J,0}B^+/B^+ = U^-_{w_{J,0},>0}B^+/B^+.$$ 
We conclude that, when $W_J$ is finite, $\varphi$ restricts to an isomorphism $\mathcal B^J_{\geq0}\simeq\mathcal B_{\geq0}^+$.
\end{example}

Returning to the general situation, for $v, w \in W$, we set $\CB^J_{v, w, >0}=\CB^J_{\geq 0} \cap \oB^J_{v, w}$. Then
$$\CB^J_{\geq0} = \bigsqcup_{v,w}\CB^J_{v,w,>0}.$$
This is the object of major interest in this paper.

Below we recollect some combinatorial and topological results that will be used.

\subsection{Combinatorial results}\label{sec:comb}
Let $(P,\leq)$ be a poset. For every $x,y\in P$ with $x\leq y$, the \textit{interval} between $x$ and $y$ is defined as $[x,y] = \{z\in P|x\leq z\leq y\}$. A subset $C\subseteq P$ is \textit{convex} if $[x,y]\subseteq C$ for $x,y\in C$.

The covering relation is denoted by $\lessdot$. For $x,y\in P$ with $x\leq y$, a maximal chain from $y$ to $x$ is a finite sequence of elements $y = z_0\gtrdot z_1\gtrdot\cdots\gtrdot z_n = x$. In general, a maximal chain may not exist.

The poset $P$ is \textit{pure} if for any $x,y\in P$ with $x\leq y$, a maximal chain from $x$ to $y$ always exist and have the same length. A pure poset is \textit{graded} if it contains a unique minimum and a unique maximum. A pure poset is \textit{thin} if every interval of length $2$ has exactly $4$ elements.

The order complex $\triangle_{ord}(P)$ of a finite poset $P$ is a simplicial complex whose vertices
are the elements of $P$ and whose simplexes are the chains $x_0 > x_1 >\cdots > x_k$ in $P$. A
finite pure poset is \textit{shellable} if its order complex $\triangle_{ord}(P)$ is shellable, that is, its maximal faces can be ordered as $F_1,F_2,\cdots,F_n$ so that $F_k\cap (\cup_{i=1}^{k-1}F_i)$ is a nonempty
union of the maximal proper faces of $F_k$, for $k=2,3,\cdots, n$.

We consider the poset $(Q^J,\leq_{Q^J})$ of intervals in $(W,\leq^J)$.  Namely, 
$$Q^J = \{[x,y]\in W\times W|x\leq^J y\}, $$
and $[x_1,y_1]\leq_{Q^J}[x_2,y_2]$ if $x_2\leq^J x_1\leq^Jy_1\leq^Jy_2$ in $W$. Let $\hat Q^J := Q^J\sqcup\{\hat 0\}$ be the augmented poset with $\hat 0$ an adjoined minimal element. The following proposition can be proved similarly as in \cite[Proposition 3.9]{Dy92} and \cite{Wil07}. See also \cite[Proposition 3.1]{BH24} and \cite[\S4]{BH21b}.

\begin{proposition} \label{Weyl_shellable}
\begin{enumerate}
    \item Every interval in the poset $(W,\leq^J)$ is graded, thin and shellable.
    \item Every interval in the poset $(\hat Q^J,\leq_{\hat Q^J})$ is graded, thin and shellable. 
\end{enumerate}
\end{proposition}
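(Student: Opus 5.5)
Our plan is to reduce both statements to known results on the ordinary Bruhat order by an explicit order-isomorphism, and then invoke Dyer's EL-shellability together with the standard interval-poset argument. The key observation is that $(W,\leq^J)$ is the Bruhat order of a ``twisted'' Coxeter structure. Concretely, we will show that the map $w=w^Jw_J\mapsto (w^J, w_J)$ identifies $\leq^J$ with the order on $W$ arising as a Bruhat-type order for a reflection order twisted by $W_J$. Dyer's theory of twisted Bruhat orders applies here: for a suitable initial section $A=\Phi_J^+$ of the positive roots, $\leq^J$ coincides with Dyer's twisted Bruhat order $\leq_A$. We verify this first. The $J$-length $\ell^J(w)=\ell(w^J)-\ell(w_J)$ equals $\ell(w)-2|\{\beta\in\Phi_J^+ : w\beta<0\}|$, which is the twisted length $\ell_A$. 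The defining condition ``$v^Ju\leq w^J$ and $w_J\leq u^{-1}v_J$'' is then matched with the description of covering relations in $\leq_A$, namely $v\lessdot^J w$ iff $w=vt$ for a reflection $t$ with $\ell^J(w)=\ell^J(v)+1$. To carry this out, we first compare covers on each side using the Bruhat order on $W^J$ and the opposite order on $W_J$. We then show that the transitive closure agrees with the stated definition, following the Billey--Deodhar/\cite{BD94} description of closures of $J$-Schubert cells.

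With this identification, part (1) follows from \cite[Proposition 3.9]{Dy92}. Intervals in twisted Bruhat orders for initial sections admit an EL-labelling by reflections with respect to a reflection order compatible with $A$. This gives gradedness by $\ell^J$, thinness (every length-$2$ interval is a diamond, by the dihedral-subgroup argument), and shellability via lexicographic shellability of the order complex. Alternatively, and more concretely, one can realize $\leq^J$ geometrically: the closure relation $\mathcal B^J_w=\bigsqcup_{w'\leq^J w}\oB^J_{w'}$ shows intervals are Bruhat-type intervals in a Kac--Moody setting. Either way, each interval $[x,y]$ is the Bruhat interval of a Coxeter-like structure, and the classical results transfer.

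For part (2), we follow \cite{Wil07} and \cite[Proposition 3.1]{BH24}. An interval $[\hat 0,[x,y]]$ in $\hat Q^J$ is the augmented poset of subintervals of $[x,y]\subseteq(W,\leq^J)$. An interval between two genuine elements $[x_1,y_1]\leq[x_2,y_2]$ is isomorphic to $[x_2,x_1]^{op}\times[y_1,y_2]$, so it is graded, thin and shellable as a product of such posets. For intervals involving $\hat 0$ we use Williams' theorem: if $P$ is a graded, thin, EL-shellable poset, then its augmented interval poset is again graded, thin and shellable. Since $[x,y]$ is such a poset by (1), the result follows. The rank function is $\ell^J(y)-\ell^J(x)+1$, with $\hat 0$ at rank $0$.

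The main obstacle is the first step: proving rigorously that the combinatorially defined $\leq^J$, with its existential quantifier over $u\in W_J$, coincides with Dyer's twisted order and is graded by $\ell^J$. We would handle this by a subword-type argument. A reduced word for $w^J$ concatenated with a reduced word for $w_J$ encodes $w$, and $v\leq^J w$ corresponds to choosing a subword in the $W^J$ part and a superword in the $W_J$ part. Verifying the chain condition with this description is the delicate point.
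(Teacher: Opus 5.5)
Your proposal is correct in outline and takes essentially the same route as the paper, which gives no independent argument. The paper simply says the statement is proved as in \cite[Proposition 3.9]{Dy92} and \cite{Wil07} (see also \cite[Proposition 3.1]{BH24}), that is, by viewing $\leq^J$ (the closure order supplied by \cite{BD94}) as Dyer's twisted Bruhat order for the initial section $T\cap W_J$, and then running Williams' interval-poset argument.

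One point to tighten is how you quote Williams. In $\hat Q^J$ the $l$-moves traverse chains of $(W,\leq^J)$ in the opposite direction to the $r$-moves, so the argument needs both $[x,y]$ and its dual to be EL-shellable. Both hold here, for example because the dual of $\leq_A$ is $\leq_{T\setminus A}$. This is the hypothesis you should state, rather than EL-shellability of $[x,y]$ alone.
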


\subsection{Topological results}
Let $X$ be a regular CW complex. The {\it face poset} of $X$ is the poset $(Q,\leq)$, where $\beta\leq\alpha$ if and only if $X_\beta\subseteq\overline{X_\alpha}$. The following result \cite{Bjo84} connects the combinatorial properties introduced in \S \ref{sec:comb} to the regularity of CW complexes:

\begin{proposition} \label{shellable_regular}
Let $X$ be a regular CW complex with face poset $Q$. If $Q\sqcup\{\hat0,\hat 1\}$ (adjoining a minimum $\hat 0$ and a maximum $\hat 1$) is graded, thin, and shellable, then $X$ is homeomorphic to a sphere of dimension $\text{rank}(Q)-1$.
\end{proposition}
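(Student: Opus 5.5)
We would prove the proposition in three steps: pass from $X$ to the order complex $\triangle_{ord}(Q)$, show that this complex is a shellable pseudomanifold without boundary, and conclude that it is a sphere. This is Björner's argument from \cite{Bjo84}.

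\emph{Step 1 (reduction to the order complex).} Since $X$ is a regular CW complex, it is homeomorphic to the geometric realization of $\triangle_{ord}(Q)$, its barycentric subdivision. We would build the homeomorphism by induction on the dimension of cells. Each closed cell $\overline{X_\alpha}$ is a closed ball whose boundary is the union of the closed cells $\overline{X_\beta}$ with $\beta<\alpha$. By induction, that boundary is identified with the order complex of $\{\beta\in Q\mid \beta<\alpha\}$, which is a sphere. Coning from an interior point of $X_\alpha$ then extends the identification to the cone over this sphere, namely the order complex of $\{\beta\mid\beta\le\alpha\}$. This step is standard. So it suffices to show that $\triangle_{ord}(Q)$ is a sphere of the stated dimension.

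\emph{Step 2 (pure pseudomanifold without boundary).} Write $\hat Q=Q\sqcup\{\hat0,\hat1\}$.
\begin{itemize}
\item \emph{Purity.} Because $\hat Q$ is graded, every maximal chain of $Q$ has the same number of elements. Hence $\triangle_{ord}(Q)$ is pure, of dimension $\text{rank}(Q)-1$ under the paper's rank convention.
\item \emph{Pseudomanifold.} A codimension-one face is a maximal chain of $\hat Q$ with exactly one element $x$ removed, where $\hat0<x<\hat1$. Its neighbours $y<x<z$ in the chain form an interval $[y,z]$ of length $2$. By thinness this interval has exactly four elements, so there are exactly two choices for the middle element. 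Thus every codimension-one face lies in exactly two facets, and there is no boundary.
\end{itemize}

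\emph{Step 3 (shelling gives a sphere).} Since $\hat Q$ is shellable, $\triangle_{ord}(\hat Q)$ is shellable. Removing the cone points $\hat0$ and $\hat1$ shows that $\triangle_{ord}(Q)$ is shellable as well: the facets of $\triangle_{ord}(\hat Q)$ are exactly the joins of $\hat0$ and $\hat1$ with the facets of $\triangle_{ord}(Q)$, so the same ordering works.

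We then invoke the Danaraj–Klee theorem: a shellable $d$-pseudomanifold without boundary is PL-homeomorphic to $S^d$. The idea is that each step of the shelling attaches a $d$-simplex along a nonempty union of its boundary facets. As long as that union is a proper part of the boundary, the attaching region is a $(d-1)$-ball, so by induction the union built so far remains a $d$-ball. Only the final simplex is attached along its entire boundary, because of the no-boundary property, and this closes the ball up to a sphere.

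\emph{Main obstacle.} The hardest step is this inductive ball argument. One must show that at every stage except the last the attaching set is not the whole boundary of the new simplex, so that the union stays a ball rather than closing up prematurely. We would control this by counting: each codimension-one face belongs to exactly two facets, so an earlier complex whose boundary is empty would already be a closed pseudomanifold. Such a complex would have to contain all the facets, contradicting the fact that some remain to be added.
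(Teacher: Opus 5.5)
Your proposal is correct and is essentially the argument of \cite{Bjo84}, which the paper cites without proof. You identify $X$ with $\triangle_{ord}(Q)$, use gradedness and thinness of $Q\sqcup\{\hat0,\hat1\}$ to make $\triangle_{ord}(Q)$ a pseudomanifold without boundary, transfer shellability from $\triangle_{ord}(\hat Q)$ by deleting the two cone points, and conclude with the Danaraj--Klee theorem. Your sketch of that theorem leaves two standard ingredients implicit: the strong connectivity supplied by the shelling, which is what forces a closed subcomplex to contain every facet, and doing the ball-gluing in the PL category.
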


Recall that an $n$-dimensional topological manifold with boundary is a Hausdorff space $X$ such that every point $x \in X$ has an open neighborhood homeomorphic to either $\BR^n$, or $\BR_{\ge 0} \times \BR^{n-1}$, with $x$ mapped to a point in $\{0\} \times  \BR^{n-1}$ in the latter case. In this case, we say that $x$ lies on $\partial X$, the boundary of $X$.

The following theorem can be derived from the generalized Poincar\'e conjecture and Brown's collar theorem (see \cite[Theorem 2.2]{BH24} and references therein).

\begin{theorem}\label{thm:poincare} 
Let $X$ be a compact $n$-dimensional topological manifold with boundary such that 
\begin{enumerate}
    \item $\partial X$ is homeomorphic to an $(n-1)$-dimensional sphere, and
    \item $X \setminus\partial X$ is homeomorphic to an $n$-dimensional open ball. 
\end{enumerate}
Then $X$ is homeomorphic to an $n$-dimensional closed ball $D^n$.
\end{theorem}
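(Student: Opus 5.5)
The plan is to cap off the boundary sphere with a disk, show the resulting closed manifold is a topological sphere, and then recover $X$ as the complement of a nicely embedded open disk, using the generalized Schoenflies theorem. The small cases $n\le 2$ I would treat separately by the classification of compact surfaces and $1$-manifolds with boundary. Indeed, a compact connected $1$- or $2$-manifold with one boundary circle (resp. two boundary points) and contractible interior is a disk (resp. an interval). Connectedness of $X$ follows because it is the closure of the connected open ball $X\setminus\partial X$. So assume $n\ge 3$.

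First I invoke Brown's collar theorem. It gives an open embedding $c\colon \partial X\times[0,1)\to X$ with $c(x,0)=x$. Fix a homeomorphism $h\colon S^{n-1}\to\partial X$, and form the closed topological manifold
\[
M=X\cup_h D^n.
\]
It is a manifold near the gluing locus precisely because of the collar on the $X$ side and the radial collar on the $D^n$ side. Cover $M$ by two open sets. The first is $A=X\setminus\partial X$, an open $n$-ball by hypothesis. The second is $B=\operatorname{int}D^n\cup c(\partial X\times[0,1))$, an open $n$-ball obtained by extending the disk radially through the collar. Their intersection is $A\cap B\cong S^{n-1}\times(0,1)$.

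Since $n\ge 3$, the intersection $A\cap B$ is connected, so Seifert--van Kampen shows $M$ is simply connected. Mayer--Vietoris shows $H_*(M)\cong H_*(S^n)$. Hurewicz and Whitehead then give a homotopy equivalence $M\simeq S^n$, since $M$ is a compact manifold and hence has the homotopy type of a CW complex. The generalized Poincaré conjecture (Smale for $n\ge5$, Freedman for $n=4$, Perelman for $n=3$) yields $M\cong S^n$.

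Under this homeomorphism, $h(S^{n-1})=\partial X$ becomes an embedded $(n-1)$-sphere in $S^n$. This sphere is bicollared: it has the collar $c$ on one side and the radial collar in $D^n$ on the other. By the generalized Schoenflies theorem (Brown), the closures of both complementary components are closed $n$-balls. One of these closures is exactly $X$, so $X\cong D^n$.

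The step I expect to need the most care is verifying that the gluing produces a genuine topological manifold in which $\partial X$ is bicollared. This is exactly where Brown's collar theorem is indispensable: without a collar, neither the manifold structure of $M$ nor the Schoenflies step would be available. Everything after that is a citation of the deep theorems.
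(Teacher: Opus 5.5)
Your proof is correct and is essentially the route the paper has in mind: the paper gives no argument of its own, only pointing to Brown's collar theorem and the generalized Poincar\'e conjecture via \cite[Theorem 2.2]{BH24}. Capping off with $D^n$, identifying the resulting homotopy sphere with $S^n$, and finishing with Brown's generalized Schoenflies theorem is the standard way to fill that in.

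Two minor remarks. First, since $M$ is only a topological manifold, the Poincar\'e input for $n\ge 5$ is Newman's topological theorem rather than Smale's smooth one, and Perelman's theorem for $n=3$ needs Moise's theorem. Second, the Poincar\'e step can in fact be bypassed altogether. The parallel sphere $c(\partial X\times\{1/2\})$ is already a bicollared $(n-1)$-sphere in $X\setminus\partial X\cong\mathbb R^n\subset S^n$, so Schoenflies alone shows that the compact region it bounds there is a closed $n$-ball; that region is homeomorphic to $X$ by stretching the collar.
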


\section{The Product Structure} \label{sec:product_structure}
\subsection{Product structure on the flag varieties} \label{product_structure}
We begin by recalling the product structure on the flag variety $\mathcal B$ from \cite{BH24}. 

For any $r\in W$, we have the following isomorphisms:
$$(U^-\cap \dot rU^-\dot r^{-1})\times (U^+\cap\dot rU^-\dot r^{-1})\to \dot rU^-\dot r^{-1},\ (g_1,g_2)\mapsto g_1g_2,$$ 
$$(U^+\cap \dot rU^-\dot r^{-1})\times (U^-\cap\dot rU^-\dot r^{-1})\to \dot rU^-\dot r^{-1},\ (h_1,h_2)\mapsto h_1h_2.$$

We define 
$$\tilde\sigma_{r}:\dot rU^-\dot r^{-1}\to(\dot rU^-\dot r^{-1}\cap U^+)\times (\dot rU^-\dot r^{-1}\cap U^-),\ g\mapsto (g_2,h_2).$$
The morphism $\tilde\sigma_r$ is an isomorphism of ind-varieties. It induces an isomorphism
$$\sigma_r = (\sigma_{r,+},\sigma_{r,-}):\dot rU^-B^+/B^+\to\oB^+_r\times\oB^{+,r},$$ given by 
$$ g\dot rB^+/B^+\mapsto (g_2\dot rB^+/B^+, h_2\dot rB^+/B^+), \text{ for }g\in\dot rU^-\dot r^{-1}.$$

This product structure is a powerful tool for studying connected components of open Richardson varieties (see \cite[Theorem 3.5, Corollary 3.7, Theorem 4.1]{BH24}). 

\begin{theorem}\label{thm:product}
    Let $Y_{v,w} $ be a connected component  of $\oB^+_{v,w}(\BR)$.  For any $ v \le v'\le w' \le w$, define 
\[
	Y_{v', w'}  = \overline{Y_{v,w}}  \bigcap \oB^+_{v', w'}.
\]
Assume that for any $ v \le v'\le r \le w' \le w$, we have $Y_{v', w'} \subseteq  \dot{r} B^- B^+ /B^+$. Then 
\begin{enumerate}
    \item The map $\sigma_{r}$ restricts to an isomorphism $\sigma_{r} : Y_{v', w'} \cong Y_{v', r} \times Y_{r, w'}$;

    \item The closure $\overline{Y_{v,w}}=\bigsqcup_{v\leq v'\leq w'\leq w} Y_{v', w'}$ is a remarkable polyhedral space;

    \item The closure $\overline{Y_{v,w}}=\bigsqcup_{v\leq v'\leq w'\leq w} Y_{v', w'}$ has the regularity property. 
\end{enumerate}
\end{theorem}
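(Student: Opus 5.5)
Since this theorem is recalled from \cite[Theorem 3.5, Corollary 3.7, Theorem 4.1]{BH24}, the plan is to reconstruct that argument. It has three stages, in the order of the three conclusions.

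\emph{Step 1: the product isomorphism.}
First I record that $\sigma_r$ is compatible with the Richardson stratification. For $v'\le r\le w'$ it restricts to an isomorphism
\[
\oB^+_{v',w'}\cap \dot r B^-B^+/B^+ \;\cong\; \oB^+_{v',r}\times \oB^+_{r,w'} .
\]
This follows because the factorisation $g=g_1g_2=h_1h_2$ interacts well with the two orbit structures. Left multiplication by $U^+\cap \dot rU^-\dot r^{-1}$ preserves $B^+$-orbits, and left multiplication by $U^-\cap\dot rU^-\dot r^{-1}$ preserves $B^-$-orbits. Hence $\sigma_{r,+}$ records the $B^-$-orbit and $\sigma_{r,-}$ records the $B^+$-orbit.

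The map $\sigma_r$ is a homeomorphism on real points, and it is the identity on the point $\dot rB^+/B^+$. By the hypothesis $Y_{v',w'}\subseteq \dot rB^-B^+/B^+$, the closure $\overline{Y_{v,w}}$ is contained in the open chart $\dot r U^-B^+/B^+$ along all the relevant strata. Therefore $\sigma_r$ maps $\overline{Y_{v,w}}\cap(\text{chart})$ homeomorphically onto a closed subset.

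Next I must show that $\sigma_r(Y_{v',w'})=Y_{v',r}\times Y_{r,w'}$. The plan is to argue first for $(v',w')=(v,w)$. Here $\sigma_r(Y_{v,w})$ is a connected component of $\oB_{v,r}(\mathbb R)\times\oB_{r,w}(\mathbb R)$, hence equal to $C_1\times C_2$ for some components $C_1,C_2$. Taking closures and intersecting with strata, using that $\sigma_r$ respects the strata, identifies $C_1\times C_2$ with $Y_{v,r}\times Y_{r,w}$. This uses the fact that the point $\dot rB^+/B^+$, which is $Y_{r,r}$, lies in both closures.

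The general $(v',w')$ case is obtained by the same closure argument applied to $\overline{C_1}\times\overline{C_2}$. I expect this identification to be the main obstacle. In particular, one must check that $Y_{v',w'}$ is nonempty and is a single connected component, not a union. I would handle this by induction on $\ell(w)-\ell(v)$: in the inductive step, apply the product structure with $r=v'$ or $r=w'$, which reduces to strictly smaller intervals.

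\emph{Step 2: remarkable polyhedral space.}
I proceed by induction on $\ell(w')-\ell(v')$.

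\emph{Cell structure.} Pick $r$ with $v'<r<w'$. Then $Y_{v',w'}\cong Y_{v',r}\times Y_{r,w'}$, which is a product of cells by induction. The base cases are intervals of length one, where $\oB_{v',w'}(\mathbb R)\cong\mathbb R^\times$ and $Y$ is a half-line $\mathbb R_{>0}$.

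\emph{Closure relations.} These follow from the product description: $\overline{Y_{v',w'}}$ corresponds to $\overline{Y_{v',r}}\times\overline{Y_{r,w'}}$ near each $r$. Since every point of $\overline{Y_{v,w}}$ lies in some $Y_{v'',w''}$ and hence in the chart for $r=v''$, these local descriptions cover everything.

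\emph{Step 3: regularity.}
The plan is to show that $\overline{Y_{v',w'}}$ is a compact topological manifold with boundary, and then apply Theorem~\ref{thm:poincare}.

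Compactness comes from $\overline{Y_{v',w'}}$ being closed inside the compact Schubert variety $\mathcal B^+_{w'}(\mathbb R)$.

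For the local structure, take a point $x\in Y_{v'',w''}$ and use $\sigma_{v''}$. This gives the local model
\[
\overline{Y_{v',v''}}\times \overline{Y_{v'',w'}}
\]
near $x$. By induction the factor $\overline{Y_{v',v''}}$ is a closed ball, and in $\overline{Y_{v'',w'}}$ the point $x$ lies in a stratum that begins at the minimal index $v''$. Iterating the same reduction with $\sigma_{w''}$ gives a cone (the link of $x$) over a smaller sphere. Hence each point has a half-space or Euclidean neighbourhood.

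For the boundary, $\partial\overline{Y_{v',w'}}=\bigsqcup_{\beta<\alpha}Y_\beta$. By induction this is a regular CW complex whose face poset, with $\hat0$ and $\hat1$ adjoined, is an interval in $\hat Q$. By Proposition~\ref{Weyl_shellable} this poset is graded, thin and shellable, so Proposition~\ref{shellable_regular} shows the boundary is a sphere. The interior is a cell by Step 2. Theorem~\ref{thm:poincare} then shows that $\overline{Y_{v',w'}}$ is a closed ball, which completes the induction.
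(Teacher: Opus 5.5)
Your Steps 1 and 2 follow the route the paper takes for the twisted analogue (Theorem~\ref{thm:J_product} with $J=\emptyset$): split a component by $\sigma_r$, cut closures down stratum by stratum as in Lemma~\ref{alg_cut}, and build the cells inductively from the one-dimensional case. The genuine gap is in Step~3.

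\textbf{Where Step~3 fails.} Your product-chart analysis does give Euclidean or half-space neighbourhoods at every point of $\overline{Y_{v',w'}}$ except the two extreme vertices.
\begin{itemize}
\item For $x\in Y_{v'',w''}$ with $v'<v''<w'$, the chart at $r=v''$ gives a product of a neighbourhood of a vertex in the smaller ball $\overline{Y_{v',v''}}$ and a neighbourhood of $x$ in the smaller ball $\overline{Y_{v'',w'}}$.
\item For $v''=v'<w''<w'$, the chart at $r=w''$ works symmetrically.
\end{itemize}
But $\dot v'B^+/B^+$ lies in no chart $\dot rU^-B^+/B^+$ except the one with $r=v'$. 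There $\sigma_{v'}$ identifies $\overline{Y_{v',w'}}\cap\dot v'U^-B^+/B^+$ with $\{\dot v'B^+/B^+\}\times\bigsqcup_{v'\le b\le w'}Y_{v',b}$, i.e.\ with the very neighbourhood you are trying to describe. The same happens at $\dot w'B^+/B^+$ with $r=w'$.

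So your iteration is circular exactly at these two points. Your ``cone (the link of $x$) over a smaller sphere'' is asserted, not derived; moreover, at a boundary vertex the link must be a ball, not a sphere. This cannot be waved through. A space can be a manifold with boundary away from a single point without being one at that point (for example, the cone on a solid torus). As it stands you do not know that $\overline{Y_{v',w'}}$ is a manifold with boundary, so Theorem~\ref{thm:poincare} cannot be applied.

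\textbf{The missing ingredient} is the link construction of \S\ref{links}.
\begin{itemize}
\item \emph{Conical neighbourhood.} Translate $\overline{Y_{v',w'}}\cap\dot v'U^-B^+/B^+$ by $(\dot v')^{-1}$ and embed it in a finite-dimensional real vector space. For $G/B^+$ this space sits inside a highest weight module; in the twisted setting the embedding comes from Lemma~\ref{finite_roots}. A cocharacter of $T$ acts there as a contractive flow preserving every stratum. This gives a stratified homeomorphism of the neighbourhood with $\text{Cone}(Lk_{v'})$, where $Lk_{v'}$ is its intersection with a unit sphere.
\item \emph{The link is a ball.} A separate induction (Proposition~\ref{link_regularity}) shows that $Lk_{v'}$ is a closed ball of dimension $\ell(w')-\ell(v')-1$. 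The isomorphisms $Lk_{v'}\cap\dot rU^-B^+/B^+\cong Lk_{v',r}\times\text{Cone}(Lk_r)$ make it a manifold with boundary. Its boundary is a sphere by shellability, and Theorem~\ref{thm:poincare} then applies. The top vertex is handled by the symmetric variant.
\end{itemize}

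The same torus contraction is also what you need for a fact you simply assert in Step~1, namely $\dot rB^+/B^+\in\overline{Y_{v,w}}$ (Lemma~\ref{preserve_connected_component}(1)). The argument is short. $Y_{v,w}$ is $T_{>0}$-stable, being a connected component of a $T$-stable variety. Each point of $\dot rU^-B^+/B^+$ is contracted to $\dot rB^+/B^+$ by a suitable one-parameter subgroup of $T_{>0}$. Without this you cannot prove $Y_{v',w'}\neq\emptyset$, which you correctly identified as the crux of Step~1.
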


It was shown in \cite{BH24} that $\CB^+_{v, w, >0}$ satisfies the assumption in Theorem~\ref{thm:product}, thus Theorem~\ref{thm:flag-main} was deduced from the product structure. 

\subsection{Product structure on the $J$-twisted flag varieties} \label{J_product_structure}
For any $r\in W$, we have the following isomorphisms for the $J$-twisted case:
$$(U^-\cap \dot r\JU^-\dot r^{-1})\times (U^+\cap\dot r\JU^-\dot r^{-1})\to \dot r\JU^-\dot r^{-1},\ (g_1,g_2)\to g_1g_2,$$
$$(U^+\cap \dot r\JU^-\dot r^{-1})\times (U^-\cap\dot r\JU^-\dot r^{-1})\to \dot r\JU^-\dot r^{-1},\ (h_1,h_2)\mapsto h_1h_2.$$

We define 
$$\tilde\sigma_{r}^J:\dot r\JU^-\dot r^{-1}\to(\dot r\JU^-\dot r^{-1}\cap U^+)\times (\dot r\JU^-\dot r^{-1}\cap U^-),\ g\to (g_2,h_2).$$
The morphism $\tilde\sigma_r^J$ is an isomorphism of ind-varieties. It induces an isomorphism
$$\sigma_r^J = (\sigma_{r,+}^J,\sigma_{r,-}^J):\dot r\JU^-\JB^+/\JB^+\to\oB_r^J\times\oB^{J,r},$$ given by 
$$ g\dot r\JB^+/\JB^+\mapsto (g_2\dot r\JB^+/\JB^+, h_2\dot r\JB^+/\JB^+), \text{ for }g\in\dot r\JU^-\dot r^{-1}.$$
The isomorphism $\sigma_r^J$ is stratified with respect to the open $J$-twisted Richardson varieties, namely, $$\sigma_r^J(\oB_{v,w}^J\cap\dot r\JU^-\JB^+/\JB^+) = \oB^J_{v,r}\times\oB^J_{r,w}.$$

These morphisms allow us to understand the closure of a subset of $\oB^J_{v,w}$ in $\mathcal B^J$ via the image of $\sigma^J_r$, as shown in the following lemmas.

\begin{lemma} \label{alg_cut}
Let $v\leq^Jr\leq^J w$. Consider a subset $X\subseteq\oB^J_{v,w}(\mathbb R)$. Suppose that $\dot r\JB^+/\JB^+\in\overline{X}$. Let $v',w'\in W$ such that $v\leq^Jv'\leq^Jr\leq^Jw'\leq^Jw$. Then
$$\overline X\cap\oB^J_{v',r} = \overline{\sigma^J_{r,+}(X\cap\dot r\JU^-\JB^+/\JB^+)}\cap \oB^J_{v',r};$$
$$\overline X\cap\oB^J_{r,w'} = \overline{\sigma^J_{r,-}(X\cap\dot r\JU^-\JB^+/\JB^+)}\cap \oB^J_{r,w'}.$$
\end{lemma}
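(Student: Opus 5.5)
Write $O_r := \dot r\JU^-\JB^+/\JB^+$. This is an open neighbourhood of the point $\dot r\JB^+/\JB^+$ in $\CB^J$, being a translate of the big cell $\JU^-\JB^+/\JB^+$. We prove the first equality; the second is symmetric, with the roles of $\sigma^J_{r,+}$ and $\sigma^J_{r,-}$ exchanged. The plan is to use three facts: $\sigma^J_r$ is an isomorphism from $O_r$ onto $\oB^J_r\times\oB^{J,r}$; it respects the stratification; and its restriction to $\oB^J_{r}$ is the identity in the appropriate sense. The identity $\sigma^J_{r,+}(x)=x$ holds for $x\in\oB^J_r\cap O_r$. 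Indeed, such $x$ is $g\dot r\JB^+/\JB^+$ with $g\in \dot r\JU^-\dot r^{-1}\cap U^+$, so in the factorization $g=h_1h_2$ we have $h_2=1$, and in the factorization $g=g_1g_2$ we have $g_1=1$ and $g_2=g$.

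\textbf{Step 1: reduction to $O_r$.} Every point of $\oB^J_{v',r}$ with $v'\le^J r$ lies in $\oB^J_r$. One checks that $\oB^J_r\subseteq O_r$: points of $\oB^J_r$ have the form $b\dot r\JB^+/\JB^+$ with $b\in B^+$, and $B^+\dot r\JB^+=(U^+\cap\dot r\JU^-\dot r^{-1})\dot r\JB^+$ by the standard decomposition of $U^+$ relative to $\dot r$. Since $O_r$ is open, we get
$$\overline X\cap\oB^J_{v',r}=\overline{X\cap O_r}\cap\oB^J_{v',r},$$
where the closure on the right is taken in $O_r$.

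\textbf{Step 2: transport by $\sigma^J_r$.} Apply the homeomorphism $\sigma^J_r$ to the set $X\cap O_r$. Its image $\sigma^J_r(X\cap O_r)$ lies in $\oB^J_{v,r}\times\oB^J_{r,w}$. A point $x\in\oB^J_{v',r}$ is sent to $(x,\dot r\JB^+/\JB^+)$. Hence $x\in\overline{X\cap O_r}$ if and only if $(x,\dot r\JB^+)$ lies in the closure of $\sigma^J_r(X\cap O_r)$ inside $\oB^J_r\times\oB^{J,r}$.

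The inclusion "$\subseteq$" is immediate: project the closure to the first factor, using that $\sigma^J_{r,+}$ is continuous and the identity on $\oB^J_r$.

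\textbf{Step 3: the reverse inclusion, which is the main obstacle.} Suppose $x\in\overline{\sigma^J_{r,+}(X\cap O_r)}\cap\oB^J_{v',r}$. We need points of $X$ whose first coordinate approaches $x$ and whose second coordinate simultaneously approaches $\dot r\JB^+$. The hypothesis $\dot r\JB^+\in\overline X$ gives this only at the single point $(\dot r,\dot r)$.

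The idea is to use a torus contraction. Choose a regular dominant cocharacter and set $\lambda=\dot r\lambda_0\dot r^{-1}$ with $\lambda_0$ chosen, suitably adapted to $J$, so that conjugation by $\lambda(t)$ contracts $\dot r\JU^-\dot r^{-1}\cap U^-$ to $1$ as $t\to0$. This works because those roots are positive for the twisted order. Such a $\lambda(t)$ acts on $O_r$ compatibly with $\sigma^J_r$ by conjugation on each factor.

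This alone does not preserve $X$, so a refinement is needed. Argue by semialgebraicity, which is available since the relevant $X$ are semialgebraic in applications; alternatively, use the actual statement that only closures of images are involved. Take a sequence $x_n\in X\cap O_r$ with $\sigma^J_{r,+}(x_n)\to x$. Using the curve selection lemma, choose a semialgebraic path in $X$ tending to $\dot r\JB^+$. Combining this path with the $T$-equivariance of $\sigma^J_r$, one shows that the second coordinates can be forced to $\dot r\JB^+$.

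In short, the key approach I would try first is a diagonal argument in the product $\oB^J_{v,r}\times\oB^J_{r,w}$, together with the dimension count $\dim=\ell^J$. I expect making this rigorous to be the hardest part, and I would follow the argument of the analogous \cite[Lemma 3.3]{BH24} for $J=\emptyset$ essentially verbatim.
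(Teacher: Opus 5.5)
Your Steps 1 and 2 are correct. They prove $\overline X\cap\oB^J_{v',r}\subseteq\overline{\sigma^J_{r,+}(X')}\cap\oB^J_{v',r}$, where $X'=X\cap O_r$.

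Step 3 is not an argument. You name a torus contraction, curve selection, a ``diagonal argument'' and a dimension count. None of them produces points of $X$ whose $\sigma^J_{r,+}$-coordinate tends to $x$ while the $\sigma^J_{r,-}$-coordinate tends to $\dot r\JB^+/\JB^+$, and deferring to the case $J=\emptyset$ does not supply this.

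In fact nothing of this kind can work, because for an arbitrary subset $X$ the reverse inclusion fails as soon as $v<^Jr<^Jw$. Pick $x\in\oB^J_{v,r}(\mathbb R)$ and $q\in\oB^J_{r,w}(\mathbb R)$. Pick points $(p_n,p_n')\in\oB^J_{v,r}(\mathbb R)\times\oB^J_{r,w}(\mathbb R)$ converging to $(\dot r\JB^+/\JB^+,\dot r\JB^+/\JB^+)$, and put $X=(\sigma^J_r)^{-1}\bigl(\{(x,q)\}\cup\{(p_n,p_n')\}_n\bigr)$. Then $\dot r\JB^+/\JB^+\in\overline X$ and $x\in\sigma^J_{r,+}(X')\cap\oB^J_{v,r}$. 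But $\sigma^J_r(x)=(x,\dot r\JB^+/\JB^+)$ is not a limit of points of $\sigma^J_r(X')$, so $x\notin\overline X$.

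Connected, semialgebraic, $T_{>0}$-stable $X$ fail too. Take $J=\emptyset$, $G=SL_3$, $v=1$, $r=s_1s_2$, $w=w_0$, and $X=T_{>0}\cdot(\sigma^J_r)^{-1}(x,q)$ with $x$ generic. The coordinates of $x$ have weights $\alpha_1$ and $\alpha_1+\alpha_2$, and that of $q$ has weight $-\alpha_2=\alpha_1-(\alpha_1+\alpha_2)$. Hence $t\cdot q$ cannot tend to $\dot rB^+/B^+$ while $t\cdot x$ tends to $x$. So the curve-selection and equivariance ideas cannot close Step 3.

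The missing idea is a product hypothesis on $X$. The paper's proof gets the reverse inclusion from the assertion $\overline{X'}\cap O_r\simeq\overline{\sigma^J_{r,+}(X')}\times\overline{\sigma^J_{r,-}(X')}$, which is exactly the step you could not supply. That assertion is justified when $\sigma^J_r(X')=A\times B$ is a product set. This is the situation in the uses of the lemma in the paper: either $X=\oB^J_{v,w}\cap O_r$, or $X$ is a connected component of $\oB^J_{v,w}(\mathbb R)$ contained in $O_r$, whose image under $\sigma^J_r$ is a product of connected components.

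With this hypothesis your Step 3 becomes one line:
\begin{itemize}
\item $\dot r\JB^+/\JB^+\in\overline X$ gives $(\dot r\JB^+/\JB^+,\dot r\JB^+/\JB^+)\in\overline{A}\times\overline{B}$, so $\dot r\JB^+/\JB^+\in\overline B$.
\item For $x\in\overline A\cap\oB^J_{v',r}$, the point $\sigma^J_r(x)=(x,\dot r\JB^+/\JB^+)$ then lies in $\overline A\times\overline B=\overline{A\times B}$.
\item Therefore $x\in\overline{X'}\subseteq\overline X$.
\end{itemize}
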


\begin{remark}
Similar results hold when the Hausdorff closure is replaced by the Zariski closure.
\end{remark}

\begin{proof}
Let $X' = X\cap\dot r\JU^-\JB^+/\JB^+$. Since $\dot r\JU^-\JB^+/\JB^+$ is open, we have $\overline{X'}\cap\dot r\JU^-\JB^+/\JB^+ = \overline{X}\cap\dot r\JU^-\JB^+/\JB^+$. Via restriction we have
$$\overline{X'}\cap\dot r\JU^-\JB^+/\JB^+\simeq \overline{\sigma_{r,+}^J(X')}\times\overline{\sigma_{r,-}^J(X')}.$$
Since $\dot r\JB^+/\JB^+\in\overline{X}\cap\dot r\JU^-\JB^+/\JB^+\subseteq\overline{X'}$, we have $\dot r\JB^+/\JB^+\in\overline{\sigma_{r,+}^J(X')}$ and $\dot r\JB^+/\JB^+\in\overline{\sigma_{r,-}^J(X')}$. Since the isomorphism is stratified, we have
$$\overline{X'}\cap\oB_{v',r}^J\simeq (\overline{\sigma^J_{r,+}(X')}\cap\oB^J_{v',r})\times\dot r\JB^+/\JB^+\simeq \overline{\sigma_{r,+}^J(X')}\cap\oB^J_{v',r}.$$
The composition is the identity map. The second equality is proved similarly.
\end{proof}

\begin{lemma} \label{preserve_connected_component}
Suppose that $Y$ is a connected component of $\oB^J_{v,w}(\mathbb R)$. Assume that $Y\subseteq\dot r\JU^-\JB^+/\JB^+$. Let $\overline Y$ be the Hausdorff closure of $Y$ in $\mathcal B^J(\mathbb R)$. Then
\begin{enumerate}
    \item $\dot r\JB^+/\JB^+\in\overline Y$;
    \item $\overline Y\cap\oB^J_{v,r} = \sigma_{r,+}^J(Y)$ is a connected component of $\oB^J_{v,r}(\mathbb R)$;
    \item $\overline Y\cap\oB^J_{r,w} = \sigma_{r,-}^J(Y)$ is a connected component of $\oB^J_{r,w}(\mathbb R)$.
\end{enumerate}

\begin{proof}
 For any $u\in\JU^-$, there is a finite sequence $(\alpha_1,\alpha_2,\cdots,\alpha_m)$ of roots in $(\Phi^-\setminus\Phi_J^-)\sqcup\Phi_J^+$ such that $u\in\prod_{i=1}^mU_{\alpha_i}$. Then there exists a coweight $\mu$, such that $\lim_{t\to0}\mu(t)u\mu(t)^{-1} = e$. Let $\mu' = r(\mu)$. Then $\lim_{t\to0^+}\mu'(t)\dot ru\JB^+/\JB^+ = \dot r\JB^+/\JB^+$. Since $\oB^J_{v,w}$ is stable under $T$-action, and $Y$ is a connected component of $\oB^J_{v,w}(\mathbb R)$, we know that $Y$ is stable under $T_{>0}$-action. This proves part (1). 

Since $\sigma_r^J$ is an isomorphism, we see that $\sigma_{r,+}^J(Y)$ is a connected component of $\oB^J_{v,r}(\mathbb R)$, and thus is closed in $\oB^J_{v,r}$. We have $\sigma_{r,+}^J(Y) = \overline{\sigma_{r,+}^J(Y)}\cap\oB^J_{v,r}$, which equals $\overline{Y}\cap\oB^J_{v,r}$ by \Cref{alg_cut}. This proves part (2); part (3) is proved similarly.
\end{proof}

\end{lemma}

\subsection{Main result of this section}
The main purpose of this section is to establish the following theorem, which parallels \Cref{thm:product}.

\begin{theorem} \label{thm:J_product}
    Let $Y_{v,w} $ be a connected component  of $\oB^J_{v,w}(\BR)$. For any $ v \le v'\le w' \le w$, define 
\[
	Y_{v', w'}  = \overline{Y_{v,w}}  \bigcap \oB^J_{v', w'}.
\]
Assume that for any $ v \le^J v'\le^J r \le^J w' \le^J w$, we have $Y_{v', w'} \subseteq  \dot{r} \JB^- \JB^+ /\JB^+$. Then 
\begin{enumerate}
    \item The map $\sigma^J_{r}$ restricts to an isomorphism $\sigma^J_{r} : Y_{v', w'} \cong Y_{v', r} \times Y_{r, w'}$;

    \item The closure $\overline{Y_{v,w}}=\bigsqcup_{v\leq^J v'\leq^J w'\leq^J w} Y_{v', w'}$ is a remarkable polyhedral space;

     \item The closure $\overline{Y_{v,w}}=\bigsqcup_{v\leq^J v'\leq^J w'\leq^J w} Y_{v', w'}$ has the regularity property. 
\end{enumerate}
\end{theorem}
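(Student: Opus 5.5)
The plan is to transport the argument of \cite{BH24} for \Cref{thm:product} to the twisted setting. The two inputs that argument uses are the stratified isomorphism $\sigma^J_r$ of \S\ref{J_product_structure}, together with Lemmas \ref{alg_cut} and \ref{preserve_connected_component}, and the combinatorics of $(W,\leq^J)$ and $(\hat Q^J,\leq_{\hat Q^J})$ from \Cref{Weyl_shellable}. I read the statement with $\leq$ meaning $\leq^J$ throughout. We may assume $Y_{v,w}\neq\emptyset$. Note that $\dot r\JB^-\JB^+/\JB^+=\dot r\JU^-\JB^+/\JB^+$, which is exactly the domain of $\sigma^J_r$.

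For part (1), I argue by induction on $\ell^J(w')-\ell^J(v')$, for fixed $v\le^J v'\le^J r\le^J w'\le^J w$.

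\begin{itemize}
\item The case $v'=v$, $w'=w$ comes first. By hypothesis $Y_{v,w}\subseteq \dot r\JU^-\JB^+/\JB^+$. Lemma \ref{preserve_connected_component} then gives $\dot r\JB^+/\JB^+\in\overline{Y_{v,w}}$, and identifies $\sigma^J_{r,+}(Y_{v,w})=Y_{v,r}$ and $\sigma^J_{r,-}(Y_{v,w})=Y_{r,w}$ as connected components. Since $\sigma^J_r$ is a stratified isomorphism, $\sigma^J_r(Y_{v,w})$ is a connected component of $\oB^J_{v,r}(\BR)\times\oB^J_{r,w}(\BR)$ contained in $Y_{v,r}\times Y_{r,w}$. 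A product of connected components is itself a connected component, so equality holds.
\item For general $(v',w')$, the hypothesis puts $Y_{v',w'}$ inside the open set $\dot r\JU^-\JB^+/\JB^+$. Hausdorff closure commutes with the homeomorphism $\sigma^J_r$ on this open set. Hence $\overline{Y_{v,w}}\cap \dot r\JU^-\JB^+/\JB^+$ corresponds to $\overline{Y_{v,r}}\times\overline{Y_{r,w}}$, intersected with the image.
\item Intersecting with the stratum $\oB^J_{v',w'}$, whose image is $\oB^J_{v',r}\times\oB^J_{r,w'}$, gives $Y_{v',w'}\cong(\overline{Y_{v,r}}\cap\oB^J_{v',r})\times(\overline{Y_{r,w}}\cap\oB^J_{r,w'})$.
\item It remains to identify these factors with $Y_{v',r}$ and $Y_{r,w'}$, the latter being defined as $\overline{Y_{v,w}}\cap\oB^J_{v',r}$ and $\overline{Y_{v,w}}\cap\oB^J_{r,w'}$. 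This follows from Lemma \ref{alg_cut}, applied with $X=Y_{v,w}$ and using $\dot r\JB^+/\JB^+\in\overline X$.
\end{itemize}

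For part (2), I use induction on $\ell^J(w)-\ell^J(v)$, proving the following for every nonempty $Y_{v',w'}$: it is a connected component of $\oB^J_{v',w'}(\BR)$, it is a semi-algebraic cell of the correct dimension, and its closure is the union of the smaller $Y$'s.

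\begin{itemize}
\item Suppose $v<^Jv'$ or $w'<^Jw$. Choose $r=v'$ or $r=w'$ accordingly. Part (1) gives $Y_{v',w'}\cong Y_{v',w'}\times\{pt\}$ as the $\sigma^J_r$-image of a product of sets attached to a strictly shorter interval. This reduces the claim to the induction hypothesis applied to $Y_{v,r}$ or $Y_{r,w}$.
\item In the top case, cellularity of $Y_{v,w}$ is the key point, and I expect it to be the main obstacle. Following \cite{BH24}, I would pick $r$ with $v<^J r<^J w$ when $\ell^J(w)-\ell^J(v)\ge2$. Then $Y_{v,w}\cong Y_{v,r}\times Y_{r,w}$, a product of cells by induction, hence a cell. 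When the interval has length $1$, I would show directly that a connected component of the one-dimensional real $\oB^J_{v,w}$ is an open interval, using its $T_{>0}$-stability and the contracting coweight from Lemma \ref{preserve_connected_component}.
\item Nonemptiness of $Y_{v',w'}$ for every $v\le^Jv'\le^Jw'\le^Jw$ follows by iterating the statement that $\dot r\JB^+/\JB^+\in\overline{Y}$ for all $r$ in the interval. The closure relation is then formal.
\end{itemize}

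For part (3), I follow \cite[Theorem 4.1]{BH24}. By induction, each proper closure $\overline{Y_{v',w'}}$ is a closed ball. Hence $\overline{Y_{v,w}}$ is a regular CW complex whose face poset is the interval $[\,[w,v]\,,\,\cdot\,]$ in $Q^J$. By \Cref{Weyl_shellable}(2) and \Cref{shellable_regular}, its boundary $\overline{Y_{v,w}}\setminus Y_{v,w}$ is a sphere.

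To apply \Cref{thm:poincare}, I must show that $\overline{Y_{v,w}}$ is a topological manifold with boundary. For this, I would use the local product charts from part (1) near each boundary point $p\in Y_{v',w'}$. If $v'\neq v$, take $r=v'$; the chart $Y_{v,r}\times Y_{r,w}$-closure models a neighbourhood of $p$ as $(\text{cone}) \times \BR^{k}$, where the cone comes from the link of $\dot r\JB^+/\JB^+$ in $\overline{Y_{v,r}}$. That cone is a ball by contraction under the coweight, giving a half-space neighbourhood. The case $w'\neq w$ is symmetric with $r=w'$. I expect the only genuinely new verification to be the limit argument with the $J$-twisted coweight, and this was already done in Lemma \ref{preserve_connected_component}.
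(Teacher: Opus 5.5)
\textbf{Parts (1) and (2).} Part (1) is correct, and slightly more direct than the paper. You transport the closure of $Y_{v,w}$ through $\sigma^J_r$ and identify the factors with \Cref{alg_cut}. The paper instead first shows that $Y_{v',w'}=\sigma^J_{w',+}(\sigma^J_{v',-}(Y_{v,w}))$ is a connected component, and then reruns the top-case argument. Part (2) is fine in outline. For length one you can simply quote \Cref{one_dim_KL}, which gives $\oB^J_{v,w}(\BR)\simeq\BR^\times$.

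\textbf{The gap in part (3).} It lies in the sentence ``that cone is a ball by contraction under the coweight''. The contracting flow only shows that a neighbourhood of the fixed point $\dot r\JB^+/\JB^+$ in the relevant closure is homeomorphic to $\mathrm{Cone}(Lk)$, where $Lk$ is the link. That cone is a half-space $\BR^{n-1}\times\BR_{\geq0}$ only if $Lk$ is a closed $(n-1)$-ball, and the flow argument says nothing about the topology of $Lk$.

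For points of $Y_{v',w'}$ with $(v',w')\notin\{(v,v),(w,w),(v,w)\}$ you can repair this without links. Choose $r$ with $v<^Jr<^Jw$ and $v'\leq^J r\leq^J w'$, and apply the induction hypothesis to both $\overline{Y_{v,r}}$ and $\overline{Y_{r,w}}$; a product of two manifolds with boundary is again one.

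At the two extreme vertices $\dot v\JB^+/\JB^+$ and $\dot w\JB^+/\JB^+$ this repair is unavailable. The only chart containing them is $\sigma^J_v$ (resp.\ $\sigma^J_w$), which is trivial there. The neighbourhood in question is therefore a neighbourhood in $\overline{Y_{v,w}}$ itself, and induction gives nothing. You must genuinely prove that the link of each of these two points in $\overline{Y_{v,w}}$ is a closed ball of dimension $\ell^J(w)-\ell^J(v)-1$.

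\textbf{How the paper supplies it.} This is the bulk of the paper's proof of (3), carried out in four steps.
\begin{enumerate}
\item It embeds $\overline{Y_{v,w}}\cap U^-\dot v'\JB^+/\JB^+$ into a finite-dimensional space $L^{v'}_{v,w}$ via \Cref{finite_roots}, since no highest-weight embedding exists for $\mathcal B^J$. This is what makes the links and the contractive-flow lemma meaningful.
\item It proves a product structure on links, $Lk_{v'}\cap\dot r\JU^-\JB^+/\JB^+\simeq Lk_{v',r}\times\mathrm{Cone}(Lk_r)$ (\Cref{link_isomorphism}).
\item It runs a separate induction (\Cref{link_regularity}). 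The link is a manifold with boundary; its boundary is a sphere by \Cref{Weyl_shellable}(1) and \Cref{shellable_regular}; hence the link is a ball by \Cref{thm:poincare}.
\item It uses a variant of the same argument for the top vertex $\dot w\JB^+/\JB^+$.
\end{enumerate}
Your proposal needs this additional inductive statement about links before \Cref{thm:poincare} can be applied to $\overline{Y_{v,w}}$. The $J$-twisted limit argument of \Cref{preserve_connected_component} is not the only new verification required.
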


The general strategy is similar to the proof of \Cref{thm:product} in \cite{BH24}. However, there are additional difficulties in this complicated situation. First, we need some geometric properties of $\oB^J_{v,w}$, which we establish in \S\ref{KL_properties}--\S\ref{sec:dim}. Second, while in the ordinary case one studies the links of $Y_{v,w}$ using highest weight representations, such a theory is not available in the twisted situation. Instead, we embed certain subsets of each $\overline Y_{v,w}$ into finite-dimensional vector spaces and obtain parallel results in \S\ref{links}. 

We prove Theorem \ref{thm:J_product} (1) in \S\ref{component_product_structure}, (2) in \S\ref{general_remarkable}, and (3) in \S\ref{general_regularity}.

\subsection{Nonemptiness pattern of $\oB^J_{v,w}$} \label{KL_properties}
We have the following description of the nonemptiness pattern of $\oB^J_{v,w}$, generalizing \S\ref{sec:flag} (a). 
\begin{proposition} \label{nonempty_pattern}
Let $v, w \in W$. The following are equivalent:
\begin{enumerate}
    \item $\oB_{v,w}^J\neq\emptyset$;
    \item $\mathcal B^J_{v,w}\neq\emptyset$;
    \item $v\leq^J w$.
\end{enumerate}

\begin{proof}
Clearly (1) $\Rightarrow$ (2). For (2) $\Rightarrow$ (3), first note that all $T$-fixed points in $\mathcal B^J$ are $\{\dot u\JB^+/\JB^+|u\in W\}$. Since $\mathcal B^J_{v,w}$ is projective and $T$-invariant, $\mathcal B^J_{v,w}$ contains a $T$-fixed point, say $\dot u\JB^+/\JB^+$. Since $\mathcal B^J_{v,w}\subseteq\mathcal B^J_w =\sqcup_{w'\leq^J w}\oB^J_{w'}$, we have $u\leq^Jw$. Since $\mathcal B^J_{v,w}\subseteq\mathcal B^{J,v} = \sqcup_{v\leq^J v'}\oB^{J,v'}$, we have $v\leq^Ju$. Then $v\leq^Jw$.

Now we show that (3) $\Rightarrow$ (1). Consider the isomorphisms
$$(U^+\cap\dot v\JU^-\dot v^{-1})\times (U^-\cap \dot v\JU^-\dot v^{-1})\to\dot v\JU^-\dot v^{-1}\to \dot v\JU^-\JB^+/\JB^+$$
$$(u_1,u_2)\to u_1u_2\to u_1u_2\dot v\JB^+/\JB^+.$$
Since $U^-\cap\dot v\JU^-\dot v^{-1}$ is isomorphic to $\oB^{J,v}$ via $u_2\to u_2\dot v\JB^+/\JB^+$, we have
$$(U^+\cap\dot v\JU^-\dot v^{-1})\times\oB^{J,v}\to\dot v\JU^-\JB^+/\JB^+.$$
Since $\oB^J_w$ is invariant under left multiplication by $U^+$, via restriction, we have
$$(U^+\cap\dot v\JU^-\dot v^{-1})\times (\oB^{J,v}\cap\oB^J_w)\simeq \dot v\JU^-\JB^+/\JB^+\cap\oB^J_w.$$
It is sufficient to show that $\dot v\JU^-\JB^+/\JB^+\cap\oB^J_w\neq\emptyset$. Since $\dot v\JU^-\JB^+/\JB^+$ is open, we only need to show that $\dot v\JU^-\JB^+/\JB^+\cap\mathcal B^J_w\neq\emptyset$. But it contains the point $\dot v\JB^+/\JB^+$ since $v\leq^J w$.
\end{proof}

\end{proposition}

\begin{corollary} \label{product_structure_nonempty}
Let $v\leq^Jr\leq^J w$. Then $\oB_{v,w}^J\cap\dot r\JU^-\JB^+/\JB^+\neq\emptyset$ and $\dot r\JB^+/\JB^+\in\overline{\oB^J_{v,w}\cap\dot r\JU^-\JB^+/\JB^+}$.

\begin{proof}
The nonemptyness follows from \Cref{nonempty_pattern} and the isomorphism
$$\sigma_r^J:\oB^J_{v,w}\cap\dot r\JU^-\JB^+/\JB^+\simeq\oB^J_{v,r}\times\oB^J_{r,w}.$$
The last statement can be proved similarly as \Cref{preserve_connected_component} (1).
\end{proof}
\end{corollary}

\subsection{Closure relation of $\oB^J_{v,w}$}
We have the following description of the nonemptiness pattern of $\oB^J_{v,w}$, generalizing \S\ref{sec:flag} (c). 

\begin{proposition} \label{KL_closure}
We have $$\mathcal B^J_{v,w} = \bigsqcup_{v\leq^J v'\leq^J w'\leq^J w}\oB^J_{v',w'}.$$

\begin{proof}
By \cite[Theorem 4]{BD94}, we have $\mathcal B^J_{v,w}\subseteq\mathcal B_w^J\cap\mathcal B^{J,v} = \bigsqcup_{v\leq^J v'\leq^J w'\leq^J w}\oB^J_{v',w'}$.

Let $r\in W$ with $v\leq^J r\leq^J w$. Set $X = \oB^J_{v,w}\cap\dot r\JU^-\JB^+/\JB^+$. By \Cref{product_structure_nonempty}, $X\neq\emptyset$ and $\dot r\JB^+/\JB^+\in\overline{X}$. We apply \Cref{alg_cut} to $X$ to obtain
$$\overline{X}\cap\oB^J_{v,r} = \overline{\sigma_{r,+}^J(X)}\cap\oB^J_{v,r} = \mathcal B^J_{v,r}\cap\oB^J_{v,r} = \oB^J_{v,r};$$
$$\overline{X}\cap\oB^J_{r,w} = \overline{\sigma_{r,-}^J(X)}\cap\oB^J_{r,w} = \mathcal B^J_{r,w}\cap\oB^J_{r,w} = \oB^J_{r,w}.$$
This shows that $\oB^J_{v,r}\subseteq\mathcal B_{v,w}^J$ and $\oB^J_{r,w}\subseteq\mathcal B_{v,w}^J$ for any $v\leq^Jr\leq^J w$.

Now suppose $v\leq^Jv'\leq^J w'\leq^J w$. If $v = v'$, then taking $r = w'$ in the previous claim gives $\oB^J_{v,w'}\subseteq\mathcal B^J_{v,w}$. Otherwise, we have $\oB^J_{v',w}\subseteq\mathcal B^J_{v,w}$, and then $\oB^J_{v',w'}\subseteq\mathcal B_{v',w}^J\subseteq\mathcal B^J_{v,w}$.
\end{proof}
\end{proposition}

\subsection{Dimension of $\oB^J_{v,w}$}\label{sec:dim}
We first establish the results for the dimension $0$ and dimension $1$ cases. 

\begin{lemma}
For every $w\in W$, we have $\oB^J_{w,w} = \{\dot w\JB^+/\JB^+\}$.
\begin{proof}
    Let $\pi:G/\JB^+\to G/P_J^+$ be the natural projection. We have $\pi(\oB^J_{w,w})\subseteq B^+\dot wP_J^+/P_J^+\cap B^-\dot wP_J^+/P_J^+ = \{\dot w^JP_J^+/P_J^+\}$. Thus every element in $\oB^J_{w,w}$ is of the form $\dot w^Jl\JB^+/\JB^+$ for some $l\in L_J$. Now $\dot w^Jl\JB^+/\JB^+\in\oB^J_{w,w}$ implies that $lB_J^-\in B_J^+\dot w_JB_J^-/B_J^-\cap B_J^-\dot w_JB_J^-/B_J^- = \{\dot w_JB_J^-/B_J^-\}$. 
\end{proof}
\end{lemma}

\begin{proposition} \label{one_dim_KL}
Let $v,w\in W$ such that $v\leq^J w$ and $\ell^J(w)-\ell^J(v) = 1$. Then $\oB^J_{v,w}(\mathbb R)\simeq\mathbb R^\times$.

\begin{proof}
We prove by induction on $\ell(w^J)$. If $\ell(w^J) = 0$, then $w\in W_J$. We have $v\in W_J$ and $w\leq v$ since $v\leq^J w$. The natural isomorphism $\oB^J_w\simeq B_J^+\dot wB_J^-/B_J^-$ identifies $\oB_{v,w}^J$ with an ordinary open Richardson variety of $L_J$. The statement follows from \cite[Theorem 5]{BD94}.

Suppose that $\ell(w^J)>0$. Take $i\in I$ such that $s_iw^J<w^J$. We have $s_iw^J\in W^J$. By \cite[Corollary 1]{BD94}, an element in $\oB^J_w$ can be written as either 
\begin{itemize}
    \item $y_i(a)h\JB^+/\JB^+$ for some $a\neq0$ and $h\in B^+\dot s_i\dot w\JB^+$, or 
    \item $\dot s_ih'\JB^+/\JB^+$ for some $h'\in B^+\dot s_i\dot w\JB^+$.
\end{itemize}

Suppose there exists $y_i(a)h\JB^+/\JB^+\in\oB^J_{v,w}$ for some $a\neq0$ and $h\in B^+\dot s_i\dot w\JB^+$. Then $h\JB^+/\JB^+\in\oB^J_{v,s_iw}$. Since $\ell^J(s_iw) = \ell^J(w) - 1 = \ell^J(v)$, we have $v = s_iw$ by \Cref{nonempty_pattern} and $h\JB^+/\JB^+ = \dot v\JB^+/\JB^+$. It is clear that $y_i(a)\dot v\JB^+/\JB^+\in\oB^J_{v,w}$ for every $a\neq0$. We claim that there is no element of the form $\dot s_ih'\JB^+/\JB^+$ for some $h'\in B^+\dot s_i\dot w\JB^+$, and the claim follows. If there is, since $v<^J s_iv = w$, we have $h'\JB^+/\JB^+\in\oB^J_{v,s_iw}\cup\oB^J_{s_iv,s_iw}$ by \cite[Corollary 1]{BD94}. Since $s_iv = w\nleq^Js_iw$, we have $\oB^J_{s_iv,s_iw} = \emptyset$, and $\dot s_ih'\JB^+/\JB^+ = \dot w\JB^+/\JB^+\in\oB^J_{v,w}$. That is a contradiction. 

Suppose every element in $\oB^J_{v,w}$ is of the form $\dot s_ih'\JB^+/\JB^+$ for some $h'\in B^+\dot s_i\dot w\JB^+$. Then we have $h'\JB^+/\JB^+\in\oB^J_{v,s_iw}\cup\oB^J_{s_iv,s_iw}$. First assume that $h'\JB^+/\JB^+\in\oB^J_{v,s_iw}$. Since $\ell^J(s_iw)=\ell^J(w)-1=\ell^J(v)$, we have $\oB^J_{v,s_iw}\neq\emptyset$ if and only if $v = s_iw$, in which case, $\dot s_ih'\JB^+/\JB^+ = \dot w\JB^+/\JB^+\in\oB^J_{v,w}$. That is a contradiction. Thus, we have $h'\JB^+/\JB^+\in\oB^J_{s_iv,s_iw}$, and $\oB^J_{v,w}\simeq\oB^J_{s_iv,s_iw}$. Induction applies.
\end{proof}

\end{proposition}

We have the following dimension formula of $\oB^J_{v,w}$, generalizing \S\ref{sec:flag} (b). 

\begin{proposition} \label{dimension_formula}
Let $v,w\in W$ such that $v\leq^J w$. Then the variety $\oB^J_{v,w}$ is of dimension $\ell^J(w)-\ell^J(v)$.

\begin{proof}
Recall the isomorphism $$\sigma_r^J:\oB^J_{v,w}\cap\dot r\JU^-\JB^+/\JB^+\simeq\oB^J_{v,r}\times\oB^J_{r,w}.$$
It follows from \Cref{one_dim_KL} and induction that
\begin{enumerate}[label = (\alph*)]
    \item $\dim\oB^J_{v,w}\geq \ell^J(w)-\ell^J(v)$ for every $v,w\in W$ such that $v\leq^J w$.
\end{enumerate}

Next we show that 
\begin{enumerate}[label = (\alph*)]
    \setcounter{enumi}{1}
    \item $\dim\oB^J_{v,w'} \leq \ell^J(w')-\ell^J(v)$ for $w'\in W^J$.
\end{enumerate}
Let $\pi:G/\JB^+\to G/P_J^+$ be the natural projection. We have $$\pi(\oB^J_{v,w'})\subseteq B^+\dot w' P_J^+/P_J^+\cap B^-\dot v^JP_J^+/P_J^+.$$ Thus $\dim\pi(\oB^J_{v,w'})\leq \ell(w')-\ell(v^J)$. Let $g\in U^-\dot v^J$, then every element in $\pi^{-1}(gP_J^+/P_J^+)$ can be written as $gl\JB^+/\JB^+$ for some $l\in L_J$. By the Bruhat decomposition of $L_J$, we have $gl\JB^+/\JB^+\in\oB^{J,v}$ if and only if $lB_J^-\in B_J^-\dot v_JB_J^-/B_J^-$. This shows $\dim\left(\pi^{-1}(gP_J^+/P_J^+)\bigcap\oB^J_{v,w}\right)\leq \ell(v_J)$. Thus, $$\dim\oB^J_{v,w'}\leq \ell(w')-\ell(v^J)+\ell(v^J) = \ell^J(w')-\ell^J(v).$$

Finally, note that for $w\in W^J$, we have $w\leq^J w^J$. Consider the isomorphism $$\sigma_w^J:\oB^J_{v,w^J}\cap\dot w\JU^-\JB^+/\JB^+\simeq\oB^J_{v,w}\times\oB^J_{w,w^J}.$$
Together with (b), it implies 
\begin{enumerate}[label = (\alph*)]
    \setcounter{enumi}{2}
    \item $\dim\oB^J_{v,w}\leq \ell^J(w)-\ell^J(v)$ for every $v,w\in W$ such that $v\leq^J w$.
\end{enumerate}

The proposition now follows from (a) and (c).
\end{proof}

\end{proposition}

\subsection{Proof of \Cref{thm:J_product} (1)} \label{component_product_structure}
The proof is similar to \cite[Theorem 3.5]{BH24}. First, by \Cref{preserve_connected_component}, for every $r$ such that $v\leq^Jr\leq^J w$, $Y_{v,r} = \sigma_{r,+}^J(Y_{v,w})$ is a connected component of $\oB^J_{v,r}(\mathbb R)$ and $Y_{r,w} = \sigma_{r,-}^J(Y_{v,w})$ is a connected component of $\oB^J_{r,w}(\mathbb R)$.  

By our assumption on $Y_{v',w}$ and \Cref{preserve_connected_component}, we have 
$$\overline{Y_{v',w}}\bigcap\oB^J_{v',w'}(\mathbb R) = \sigma^J_{w',+}(Y_{v',w}).$$ 
Applying \Cref{alg_cut} to $Y_{v,w}$, we obtain
$$Y_{v',w'} = \overline{\sigma^J_{v',-}(Y_{v,w})}\bigcap\oB^J_{v',w'}(\mathbb R) = \overline{Y_{v',w}}\bigcap\oB^J_{v',w'} = \sigma^J_{w',+}(\sigma^J_{v',-}(Y_{v,w})).$$
In particular, $Y_{v',w'}$ is a connected component of $\oB^J_{v',w'}(\mathbb R)$. Similarly, using \Cref{alg_cut} for $v^{\prime\prime},w^{\prime\prime}$ such that $v'\leq^Jv^{\prime\prime}\leq^Jw^{\prime\prime}\leq^Jw'$, we have $Y_{v^{\prime\prime},w^{\prime\prime}} = \overline{Y_{v',w'}}\bigcap\oB^J_{v^{\prime\prime},w^{\prime\prime}}(\mathbb R)$.

The statement now follows from the fact that $\sigma^J_r$ restricts to an isomorphism $$\oB^J_{v',w'}\cap\dot r\JU^-\JB^+/\JB^+\xrightarrow[]{\sim}\oB^J_{v',r}\times\oB^J_{r,w'}$$ for every $(v,w)$ with $v'\leq^Jr\leq^Jw'$.

\subsection{Proof of \Cref{thm:J_product} (2)} \label{general_remarkable}

In \S\ref{component_product_structure}, we established
that $Y_{v',w'}$ is a connected component of $\oB^J_{v',w'}(\mathbb R)$.

By \Cref{KL_closure}, we have
$$\overline{Y_{v',w'}} = \bigsqcup_{v'\leq^Jv^{\prime\prime}\leq^Jw^{\prime\prime}\leq^Jw'}\left(\overline{Y_{v^{\prime},w^{\prime}}}\bigcap\oB^J_{v^{\prime\prime},w^{\prime\prime}}\right) = \bigsqcup_{v'\leq^Jv^{\prime\prime}\leq^Jw^{\prime\prime}\leq^Jw'}Y_{v^{\prime\prime},w^{\prime\prime}}.$$
The last equality was established in \S\ref{component_product_structure}, verifying the closure relation.

Finally, we show that $Y_{v',w'}\simeq\mathbb R_{>0}^{\ell^J(w')-\ell^J(v')}$. By \Cref{one_dim_KL}, when $\ell^J(w')-\ell^J(v')=1$, we have $\oB^J_{v',w'}(\mathbb R)\simeq\mathbb R^\times$, so $Y_{v',w'}\simeq\mathbb R_{>0}$. The general case follows by induction using the product structure from \S\ref{component_product_structure}.

\subsection{Contractive Flows, Links and Cones} \label{links}
Suppose $v'\in W$ and $v\leq^J v'\leq^J w$. We first embed $\mathcal B^J_{v,w}\cap U^-\dot v'\JB^+/\JB^+$ into a finite-dimensional vector space. 

Let $X^{+}$ be the set of dominant weights of $G$. For $\lambda \in X^+$, let $\Lambda_\lambda$ denote the highest weight simple $G$-module with highest weight $\lambda$ and a fixed highest weight vector $\eta_\lambda$. Let ${}^\omega \Lambda_\lambda$ denote the lowest weight simple $G$-module with lowest weight $-\lambda$ and a fixed lowest weight vector $\xi_{-\lambda}$.

For $K\subseteq I$, let $\lambda_K$ be a dominant weight such that the stabilizer of $\lambda_K$ in $W$ is the standard parabolic subgroup $W_K$. Then the morphism $$\JU^-\to\Lambda_{\lambda_J}\otimes \prescript{\omega}{}{\Lambda_{\lambda_{(I\setminus J)}}},\ \ u\mapsto u\cdot(\eta_{\lambda_J}\otimes\xi_{-\lambda_{(I\setminus J)}})$$ 
is an embedding.

For $v,w,v'\in W$ such that $v\leq^J v'\leq^J w$, we consider the following embedding
\begin{align*}
    \mathcal B^J_{v,w}\cap U^-\dot v'\JB^+/\JB^+&\xrightarrow{(\dot v')^{-1}\cdot-} \JU^-\JB^+/\JB^+\simeq\JU^- \\
    &\xrightarrow{u\mapsto u\cdot(\eta_{\lambda_J}\otimes\xi_{-\lambda_{(I\setminus J)}})}\Lambda_{\lambda_J}\otimes {}^{\omega}{}{\Lambda_{\lambda_{(I\setminus J)}}}
\end{align*}

\begin{lemma} \label{finite_roots}
Let $v\leq^Jv'\leq^J w$. There exist finite subsets $\{\beta_1,\beta_2,\cdots,\beta_k\}\subseteq \Phi^+\setminus\Phi^-_J$ and $\{\beta_{k+1},\beta_{k+2},\cdots,\beta_m\}\subseteq\Phi_J^+$ such that $$(\dot v')^{-1}\mathcal B^J_{v,w}\cap \JU^-\JB^+/\JB^+\subseteq\prod_{i=1}^m U_{\beta_i}\JB^+/\JB^+.$$

\begin{proof}
Consider the partial flag variety $\mathcal P_J = G/P_J^+$ and the natural projection $\pi:\mathcal B^J\to\mathcal P_J$. There exists $z\in W^J$ depending on $w$ and $v'$, such that
$$\pi\left((\dot v')^{-1}\mathcal B^J_{v,w}\cap \JU^-\JB^+/\JB^+\right)\subseteq\overline{B^+\dot z P_J^+/P_J^+}\cap U^-P_J^+/P_J^+.$$
By \cite[Theorem 5]{BD94}, there exists a finite subset $\{\beta_1,\beta_2,\cdots,\beta_k\}\subseteq\Phi^-\setminus\Phi^-_J$, such that 
$$\pi\left((\dot v')^{-1}\mathcal B^J_{v,w}\cap\JU^-\JB^+/\JB^+\right)\subseteq\prod_{i=1}^k U_{\beta_i}P_J^+/P_J^+.$$
For $g\JB^+/\JB^+\in (\dot v')^{-1}\mathcal B^J_{v,w}\cap\JU^-\JB^+/\JB^+$, there exists $u\in\prod_{i=1}^kU_{\beta_i}$, such that $$u^{-1}g\JB^+/\JB^+\in U_J^+\JB^+/\JB^+\cap U^-(\dot v')^{-1}\overline{U^-\dot v\JB^+/\JB^+}.$$ 
By \cite[Corollary 1]{BD94} and \Cref{nonempty_pattern}, there exists $z'\in W_J$ depending only on $v$ and $v'$ (and independent of  $g$) such that 
$$u^{-1}g\JB^+/\JB^+\in U_J^+\JB^+/\JB^+\cap\overline{U_J^-\dot z'\JB^+/\JB^+}.$$
By \cite[Theorem 5]{BD94}, there exists a finite subset $\{\beta_{k+1},\cdots,\beta_m\}\subseteq\Phi^+_J$, depending only on $v$ and $v'$ (and independent of $g$) such that $$u^{-1}g\JB^+/\JB^+\in\prod_{i=k+1}^mU_{\beta_i}\JB^+/\JB^+.$$ The result follows.
\end{proof}

\end{lemma}

Let $L^{v'}_{v,w}$ be the vector subspace of $\Lambda_{\lambda_J}(\mathbb R)\otimes \prescript{\omega}{}{\Lambda_{\lambda_{(I\setminus J)}}}(\mathbb R)$ spanned by vectors of the form $u\cdot(\eta_{\lambda_J}\otimes\xi_{-\lambda_{(I\setminus J)}})$ for some $u\in\prod_{i=1}^m U_{\beta_i}$. The vector space $L^{v'}_{v,w}$ is finite-dimensional. The image of $\mathcal B^J_{v,w}(\mathbb R)\cap U^-\dot v'\JB^+/\JB^+$ lies in $L^{v'}_{v,w}$. We identify $\mathcal B^J_{v,w}(\mathbb R)\cap U^-\dot v'\JB^+/\JB^+$ with its image in $L^{v'}_{v,w}$. We pick a basis for $L^{v'}_{v,w}$, and equip it with an Euclidean norm $\lVert\cdot\rVert$.

Recall the assumption in \Cref{thm:J_product}. We have $Y_{v,w}$ a connected component  of $\oB^J_{v,w}(\BR)$, and for any $ v \le v'\le w' \le w$, we have
\[
	Y_{v', w'}  = \overline{Y_{v,w}}  \bigcap \oB^J_{v', w'}.
\]
We assume that for any $ v \le^J v'\le^J r \le^J w' \le^J w$, we have $Y_{v', w'} \subseteq  \dot{r} \JB^- \JB^+ /\JB^+$.

We define the links $Lk_? = Lk_?(\overline{Y_{v,w}})$ of $\overline{Y_{v,w}}$ by
$$Lk_{v',w'} = Y_{v',w'}\bigcap\left\{x\in L^{v'}_{v,w}\Big|\lVert x\rVert = 1\right\},\text{ for }w'\text{ such that }v'<^J w'\leq^J w,$$
$$Lk_{v'} = \bigsqcup_{v'<^Jw'\leq^J w}Y_{v',w'} = \overline{Y_{v,w}}\bigcap\left\{x\in L^{v'}_{v,w}\Big|\lVert x\rVert = 1\right\}.$$ 

Let $\{\beta_i\}_{i=1}^m$ be the (finite) subset of roots as in \Cref{finite_roots}. There exists a coweight $\mu$ such that $$\lim_{a\to 0}\mu(a)\left(\prod_{i=1}^m U_{\beta_i}\right)\mu(a)^{-1} = e,\ \lim_{a\to 0}\mu(a)\cdot(\eta_{\lambda_J}\otimes\xi_{-\lambda_{(I\setminus J)}}) = 0$$
Let $\theta_\mu$ be the $\mathbb R^\times$-action on $L^{v'}_{v,w}$ via $\mu$. Since $\oB^J_{v,w}(\mathbb R)$ is stable under the action of $T(\mathbb R)$, and every connected component of $\oB^J_{v,w}(\mathbb R)$ is stable under the action of $T_{>0}$, the map $\theta_\mu$ defines a contractive flow on the vector space $L^{v'}_{v,w}$ in the sense of \cite[Definition 2.2]{GKL22}. According to \cite[Lemma 3.4]{GKL22}, we have
\begin{enumerate}[label = (\alph*)]
    \item \label{scaling_number} For every $x\in\overline{Y_{v,w}}\cap U^-\dot v'\JB^+/\JB^+$ with $x\neq\dot v'\JB^+/\JB^+$, there exists a unique $t(x)\in\mathbb R_{>0}$ such that $\theta_\mu(t(x))x\in Lk_{v'}$. Moreover, the map $x\mapsto t(x)$ is continuous.
\end{enumerate} 
Given a topological space $A$, the cone over $A$ is defined as $\text{Cone}(A) := (A\times\mathbb R_{\geq0})/(A\times\{0\})$. Let $D^n$ be the $n$-dimensional closed ball. Note that $\text{Cone}(D^n)\simeq\mathbb R^n\times\mathbb R_{\geq0}$.

Consider the map $\overline{Y_{v,w}}\cap U^-\dot v'\JB^+/\JB^+\to\text{Cone}(Lk_{v'})$ defined by mapping $\dot v'\JB^+/\JB^+$ to the cone point and 
$$Y_{v',w'}\to Lk_{v',w'}\times\mathbb R_{>0},\ x\to (\theta_\mu(t(x))x,t(x)),\text{ for }w'\neq v'.$$
This map is bijective by \ref{scaling_number} and stratified. Following the argument in the proof of \cite[Proposition 3.5]{GKL22}, we have
\begin{enumerate}[label = (\alph*)] 
  \setcounter{enumi}{1}
  \item \label{cone_over_link} the map $\overline{Y_{v,w}}\cap U^-\dot v'\JB^+/\JB^+\to\text{Cone}(Lk_{v'})$ defined above is a stratified isomorphism. 
\end{enumerate}

\subsection{Regularity of links}
In this subsection, we show that $Lk_{v'}$ is a regular CW complex.
\begin{proposition} \label{link_isomorphism}
For $v'\leq^Jr<^Jw$, we have stratified isomorphisms
\begin{align*}
Lk_{v'}\cap\dot r\JU^-\JB^+/\JB^+ &= \bigsqcup_{r\leq^Jw'\leq^J w} Lk_{v',w'} \\
&\simeq Lk_{v',r}\times (\overline{Y_{v,w}}\cap U^-\dot r\JB^+/\JB^+) \\ & \simeq Lk_{v',r}\times\textup{Cone}(Lk_r).
\end{align*}

\begin{proof}
By our assumption on $Y$, we have $Lk_{v'}\cap\dot r\JU^-\JB^+/\JB^+ = \bigsqcup_{r\leq^Jw'\leq^J w} Lk_{v',w'}$. The last isomorphism follows from \S\ref{links} \ref{cone_over_link}.  We construct the second isomorphism.

Define a morphism $\alpha$ as the composition
\begin{align*}
Lk_{v',r}\times (\overline{Y_{v,w}}\cap U^-\dot r\JB^+/\JB^+)&\hookrightarrow Y_{v',r}\times (\overline{Y_{v,w}}\cap U^-\dot r\JB^+/\JB^+) \\
\xrightarrow[]{(\sigma_r^J)^{-1}} \bigsqcup_{r\leq^Jw'\leq^J w} Y_{v',w'}&\xrightarrow[]{x\mapsto \theta_\mu(t(x))x}\bigsqcup_{r\leq^Jw'\leq^J w} Lk_{v',w'}.
\end{align*}
Define $\beta$ as the composition
\begin{align*}
 \bigsqcup_{r\leq^Jw'\leq^J w} Lk_{v',w'}&\hookrightarrow \bigsqcup_{r\leq^Jw'\leq^J w} Y_{v',w'} \xrightarrow[]{\sigma_r^J} Y_{v',r}\times (\overline{Y_{v,w}}\cap U^-\dot r\JB^+/\JB^+) \\ & \xrightarrow[]{\phi} Lk_{v',r}\times(\overline{Y_{v,w}}\cap U^-\dot r\JB^+/\JB^+),
\end{align*}
where $\phi(x,y) = (\theta_\mu(t(x))x,\theta_\mu(t(x))y)$. We claim that $\beta$ is the second isomorphism in the statement, with the inverse $\alpha$. 

Let $(x,y)\in Lk_{v',r}\times (\overline{Y_{v,w}}\cap U^-\dot r\JB^+/\JB^+)$ and $z = (\sigma_r^J)^{-1}(x,y)$. Then $\alpha(x,y) = \theta_\mu(t(z))z$. Since $\sigma_r^J$ is $T$-equivariant, we have $$\sigma_r^J(\theta_\mu(t(z))z) = (\theta_\mu(t(z))x,\theta_\mu(t(z))y).$$ 
By the uniqueness of \S\ref{links}\ref{scaling_number}, we have $t(\theta_\mu(t(z))x) = t(z)^{-1}$. Therefore, $$\phi(\theta_\mu(t(z))x,\theta_\mu(t(z))y) = \beta(z) = (x,y).$$ So $\beta\circ\alpha = \text{id}$. The claim for $\alpha\circ\beta= \text{id}$ is similar.
\end{proof}

\end{proposition}

\begin{corollary} \label{link_cell}
Let $v'\leq^J w'$. We have $Lk_{v',w'}\simeq\mathbb R_{>0}^{\ell^J(w')-\ell^J(v')-1}$.

\begin{proof}
Thanks to \Cref{thm:J_product} (1) proved in \S\ref{component_product_structure} and \Cref{link_isomorphism}, it suffices to show $Lk_{v',w'}$ is a point when $\ell^J(w')-\ell^J(v')=1$. The latter statement follows from a direct computation.
\end{proof}

\end{corollary}

\begin{proposition} \label{link_regularity}
For $v\leq^Jv'\leq^J w$, the space $Lk_{v'} = \bigsqcup_{v\leq^Jw'\leq^J w}Lk_{v',w'}$ is a regular CW complex homeomorphic to a closed ball of dimension $\ell^J(w')-\ell^J(v')-1$.

\begin{proof}
We prove by induction on $\ell^J(w)-\ell^J(v')$. The base case $\ell^J(w)-\ell^J(v) = 1$ follows from \Cref{link_cell}. In the induction step, we assume that the statement holds for $Lk_?(\overline{Y_{v,w'}})$ with $w'\leq^J w$. Note that $Lk_{v'}(\overline{Y_{v,w'}})$ is a subspace of $Lk_{v'}(\overline{Y_{v,w}})$ and $Lk_{v',w^{\prime\prime}}(\overline{Y_{v,w'}}) = Lk_{v,w^{\prime\prime}}(\overline{Y_{v,w}})$ for $v'<^Jw^{\prime\prime}<^Jw'$.

We first show that
\begin{enumerate}[label = (\alph*)]
    \item $Lk_{v'}$ is a topological manifold with boundary $\partial Lk_{v'} = \bigsqcup_{v'<^J w'<^J w}Lk_{v',w'}$.
\end{enumerate}
Note that $Lk_{v',w} = Lk_{v'}\cap\dot w\JU^-\JB^+/\JB^+\simeq\mathbb R^{\ell^J(w)-\ell^J(v')-1}_{>0}$. For any $r\in W$ with $v'<^Jr<^J w$, applying the stratified isomorphism from \Cref{link_isomorphism} yields:
\begin{align*}
    Lk_{v'}\cap \dot r\JU^-\JB^+/\JB^+ &= \bigsqcup_{r\leq^J w'\leq^J w}Lk_{v',w'} \\ & \simeq Lk_{v',r}\times\text{Cone}(Lk_r) \\
    &\simeq\mathbb R^{\ell^J(r)-\ell^J(v')-1}_{>0}\times\text{Cone}(D^{\ell^J(w)-\ell^J(r)-1}) \\
    &\simeq\mathbb R^{\ell^J(r)-\ell^J(v')-2}\times\mathbb R_{\geq0},
\end{align*}
where $Lk_r\simeq D^{\ell^J(w)-\ell^J(r)-1}$ is obtained by the induction hypothesis. This shows that $Lk_{v'}$ is a topological manifold with boundary, and $Lk_{v',r}$ lies on the boundary for $r\neq w$. This proves (a).

Next, we show 
\begin{enumerate}[label = (\alph*)]
\setcounter{enumi}{1}
    \item $\partial Lk_{v'} = \bigsqcup_{v'<^J w'<^J w}Lk_{v',w'}$ is a regular CW complex homeomorphic to a sphere of dimension $\ell^J(w)-\ell^J(v)-2$.
\end{enumerate}
By the induction hypothesis, $Lk_{v'}(\overline{Y_{v,w'}})$ is a regular CW complex homeomorphic to a closed ball of dimension $\ell^J(w')-\ell^J(v)-1$ for every $w'$ with $v'\leq^J w'\leq^J w$. Therefore $\partial Lk_{v'}$ is a regular CW complex with face poset $(\{w'|v'<^J w'<^J w\},\leq^J)$. After adding a new minimum $\hat 0$ and maximum $\hat 1$, the poset becomes $\{w'|v'\leq^Jw'\leq^J w\}$, which is graded, thin, and shellable by \Cref{Weyl_shellable}. Thus, by \Cref{shellable_regular}, $\partial Lk_{v'}$ is a regular CW complex homeomorphic to a sphere of dimension $\ell^J(w)-\ell^J(v)-2$, proving (b)

Finally, the statement follows from (a), (b) and \Cref{thm:poincare}.
\end{proof}

\end{proposition}

\subsection{Proof of \Cref{thm:J_product} (3)} \label{general_regularity}

The outline of the proof is similar to \Cref{link_regularity} and \cite[\S4.2]{BH24}. We proceed by induction on $\ell^J(w)-\ell^J(v)$. The base case $\dim\oB^J_{v,w}(\mathbb R)= 0$ occurs when $\ell^J(w)-\ell^J(v)=0$, in which case $\overline{Y_{v,w}}$ is a single point. 

We first prove

\begin{enumerate}[label = (\alph*)]
    \item $\overline{Y_{v,w}}$ is a topological manifold with boundary $$\partial \overline{Y_{v,w}} = \bigsqcup_{v\leq^Jv'\leq^J w'\leq^Jw,(v',w')\neq(v,w)}Y_{v',w'}.$$
\end{enumerate}

By \S\ref{general_remarkable}, we have $Y_{v,w}\simeq\mathbb R_{>0}^{\ell^J(w)-\ell^J(v)}$. By \Cref{link_regularity} and \S\ref{links}\ref{cone_over_link}, we have 
$$\overline{Y_{v,w}}\bigcap\dot v\JU^-\JB^+/\JB^+ = \bigsqcup_{v\leq^J w'\leq^J w}Y_{v,w'}\simeq\text{Cone}(D^{\ell^J(w)-\ell^J(v)-1}).$$
Thus, we have $Y_{v,w'}\subseteq\partial\overline{Y_{v,w}}$ for every $w'$ such that $v\leq^J w'<^J w$.

Let $v'\in W$ with $v<^Jv'<^J w$. Applying the statement of \Cref{thm:J_product} (1) proved in \S\ref{component_product_structure}, we have 
\begin{align*}
    \overline{Y_{v,w}}\cap\dot v'\JU^-\JB^+/\JB^+ &= \bigsqcup_{v\leq^Jv^{\prime\prime}\leq^Jv'\leq^Jw^{\prime\prime}\leq^J w}Y_{v^{\prime\prime},w^{\prime\prime}} \\
    &\simeq \bigsqcup_{v\leq^Jv^{\prime\prime}\leq^Jv'}Y_{v^{\prime\prime},v'} \times \bigsqcup_{v'\leq^Jw^{\prime\prime}\leq^J w}Y_{v',w^{\prime\prime}}.
\end{align*}
By the induction hypothesis, $\overline{Y_{v,v'}}$ and $\overline{Y_{v',w}}$ are regular CW complexes homeomorphic to closed balls of dimensions $\ell^J(v')-\ell^J(v)-1$ and $\ell^J(w)-\ell^J(v')-1$, respectively. Therefore, 
\begin{align*}
    \overline{Y_{v,w}}\cap\dot v'\JU^-\JB^+/\JB^+ &\simeq \text{Cone}(D^{\ell^J(v')-\ell^J(v)-1})\times \text{Cone}(D^{\ell^J(w)-\ell^J(v')-1}) \\
    &=\mathbb R^{\ell^J(w)-\ell^J(v)-1}\times\mathbb R_{\geq0}.
\end{align*}
This shows that $Y_{v',w'}\subseteq\partial\overline{Y_{v,w}}$ for every $(v',w')$ with $v<^Jv'\leq^J w'\leq^J w$ and $v'\neq w$.

A similar argument, using a variation of \Cref{link_regularity} and \S\ref{links}\ref{cone_over_link}, shows that $Y_{v',w}\subseteq\partial\overline{Y_{v,w}}$ for every $v'$ such that $v\leq^J v'\leq^J w$. This proves (a).

By \Cref{Weyl_shellable} and \Cref{shellable_regular}, the boundary $\partial\overline{Y_{v,w}}$ is a regular CW complex homeomorphic to a sphere of dimension $\ell^J(w)-\ell^J(v)-1$. The result now follows from \Cref{thm:poincare}.

\section{Totally Nonnegative Twisted Flag Varieties} \label{TNN_twsited}
The totally nonnegative twisted flag variety is defined to be
$$\CB^J_{\geq0} = \overline{G_{\geq0}\JB^+/\JB^+}\text{  (Hausdorff closure in $\CB^J(\mathbb R)$)}.$$
Let $\CB^J_{v,w,>0}:=\CB^J_{\geq0}\bigcap\oB^J_{v,w}(\mathbb R)$. Then $\CB^J_{\geq0} = \bigsqcup_{v\leq^Jw}\CB^J_{v,w,>0}$. This section is devoted to the study of $\CB^J_{\geq0}$ with this decomposition.

\subsection{Marsh-Rietsch parametrization}\label{subsec:marsh-rietsch}
A major difficulty in applying Theorem~\ref{thm:J_product} to the study of $\CB^J_{\geq0}$ is verifying the condition $\CB^J_{v, w, >0} \subseteq \dot r\,^J U^-\,^J B^+/\,^J B^+$ for all $v \leq^J r \leq^J w$. In the ordinary flag variety case, this condition is verified using the Marsh-Rietsch parametrization \cite{MR04} of totally positive cells. We now recall this construction, which will serve as motivation for our generalization in the twisted setting.

An \textit{expression} is a sequence $\underline w = (t_1, t_2,\cdots, t_n)$, where $t_j\in \{s_i:i\in I\}\sqcup\{1\}$. We write $w = t_1t_2\cdots t_n\in W$ and say that $\underline w$ is an expression of $w$. The expression is \emph{reduced} if $n = \ell(w)$.

A \emph{subexpression} of $\underline w$ is a sequence $\underline w' = (t_1',t_2',\cdots, t_n')$ with $t_i'\in\{t_i,1\}$. If $\underline w$ is a reduced expression, the subexpression $\underline w'$ is called \emph{positive} if 
$t_1' t_2'\cdots t_{j-1}' < t_1' t_2'\cdots t_{j-1}' t_j$ for all $j$.

By \cite[Lemma 3.5]{MR04}, for any $v\leq w$ and any reduced expression $\underline{w}$ of $w$, there exists a unique positive subexpression for $v$ in $\underline{w}$, denoted by $\underline{v}_+$.

Let $v\leq w$ and fix a reduced expression $\underline w$ for $w$. Let $\underline v_+$ be the positive subexpression for $v$ in $\underline{w}$. Define
\[
G^-_{\underline v_+,\underline w,>0} := \left\{g_1g_2\cdots g_n \,\middle|\, 
\begin{array}{ll}
g_j = \dot s_{i_j}, & \text{if }t_j = s_{i_j}; \\
g_j\in y_{i_j}(\mathbb R_{>0}), & \text{if }t_j = 1
\end{array}
\right\}.
\]

It was shown in \cite[Theorem 11.3]{MR04} for reductive groups and in \cite[Theorem 4.10]{BH21a} for Kac-Moody groups that 
\[
\mathcal B^+_{v,w,>0} = G^-_{\underline v_+,\underline w,>0}B^+/B^+.
\]
The proof is based on the representation-theoretic interpretation of totally nonnegative flag varieties, involving the theory of the canonical basis. 

We also record some related notation for future use:
\begin{gather*} G^+_{\underline v_+,\underline w,>0} := \{g_1g_2\cdots g_n\ |\ g_j = \dot s_{i_j}\text{ if }t_j = s_{i_j}; g_j\in x_{i_j}(\mathbb R_{>0})\text{ if }t_j = 1\},\\
G^-_{\underline v_+,\underline w,\neq0} := \{g_1g_2\cdots g_n\ |\ g_j = \dot s_{i_j}\text{ if }t_j = s_{i_j}; g_j\in y_{i_j}(\mathbb R^\times)\text{ if }t_j = 1\},\\
G^+_{\underline v_+,\underline w,\neq0} := \{g_1g_2\cdots g_n\ |\ g_j = \dot s_{i_j}\text{ if }t_j = s_{i_j}; g_j\in x_{i_j}(\mathbb R^\times)\text{ if }t_j = 1\}.
\end{gather*}

\subsection{The set $G^J_{(v, w), >0}$}\label{sec:J-par}
We introduce a parametrizing set $G^J_{(v,w),>0}\subseteq G$ in analogue with the Marsh-Rietsch parametrization. In the untwisted case, the Marsh-Rietsch parametrization uses a single reduced expression for $w$. For the $J$-twisted case, our parametrization must account for the coset structure $W/W_J$. Our parametrizing set decomposes as a product of two factors:
\begin{itemize}
\item The factor $G^-_{\underline{v^J c}_+,\underline{w^J},>0}$ handles contributions from $W^J$, and
\item The factor $G^+_{\underline{w_J}_+,\underline{c^{-1}v_J},>0}$ handles contributions within the parabolic subgroup $W_J$.
\end{itemize}

From this point forward, we assume that for each $w \in W$, a reduced expression $\underline w$ is chosen.

For $v\leq^J w$, there exists $u\in W_J$ such that $v^Ju\leq w^J$ and $w_J\leq u^{-1}v_J$. By \cite[Lemma 1.4]{He07}, there is a unique minimal such $u$. We denote this element by 
\[
c = \min\{u\in W_J : v^Ju\leq w^J \text{ and } w_J\leq u^{-1}v_J\}.
\]
This minimal element satisfies the length relations:
\begin{equation}\tag{a}
\ell(cw_J) = \ell(w_J)-\ell(c) \quad \text{and} \quad \ell(c^{-1}v_J) = \ell(c^{-1})+\ell(v_J).
\end{equation}

\begin{definition}
For $v\leq^J w$, we choose reduced expressions  $\underline{w^J},\underline{v_J}$. For any reduced expression $\underline{c^{-1}}$, put $\underline{c^{-1}v_J} = (\underline{c^{-1}},\underline{v_J})$. We define
\begin{align*}
G^J_{(v,w),\neq0} &:= G^-_{\underline{v^Jc}_+,\underline{w^J},\neq0} \cdot G^+_{\underline{w_J}_+,\underline{c^{-1}v_J},\neq0}, \\
G^J_{(v,w),>0} &:= G^-_{\underline{v^Jc}_+,\underline{w^J},>0} \cdot G^+_{\underline{w_J}_+,\underline{c^{-1}v_J},>0}.
\end{align*}
\end{definition}

By the length relations~(a), we have $G^J_{(v,w),\neq0}\simeq (\mathbb R^\times)^{\ell^J(w)-\ell^J(v)}$ and $G^J_{(v,w),>0}\simeq (\mathbb R_{>0})^{\ell^J(w)-\ell^J(v)}$. Moreover, by our choice of $\underline{c^{-1}v_J}$, we have 
$$G^+_{\underline{w_J}_+,\underline{c^{-1}v_J},\neq0} = \dot c^{-1}G^+_{\underline{cw_J}_+,\underline{v_J},\neq0},$$ 
and thus these sets do not dependent on the choice of $\underline{c^{-1}}$.

\begin{lemma}\label{lem:injective}
The map 
\[
G^J_{(v,w),\neq 0} \to \mathcal{B}^J, \quad g \mapsto g\,^J B^+ / \,^J B^+
\]
is injective. Moreover, we have the inclusion
\[
G^J_{(v,w),\neq 0} \,^J B^+ / \,^J B^+ \subseteq \oB^J_{v,w}.
\]

\begin{proof}
We first prove injectivity. 

Suppose $g_1, g_2 \in G^-_{\underline{v^J c}_+, \underline{w^J}, \neq 0}$, $h_1, h_2 \in G^+_{\underline{w_J}_+, \underline{c^{-1} v_J}, \neq 0}$ such that $g_1 h_1 \,^J B^+ / \,^J B^+ = g_2 h_2 \,^J B^+ / \,^J B^+$. Then there exist $b \in B_J^-$ and $u \in U_{P_J^+}$ such that $g_1 h_1 = g_2 h_2 b u$.

Projecting to $G / P_J^+$, we obtain $g_1 P_J^+ = g_2 P_J^+$. By \cite[Proposition 5.2]{MR04}, the map $G^-_{\underline{v^J c}_+, \underline{w^J}, \neq 0} \to G / P_J^+$ is injective, hence $g_1 = g_2$.

Now we have $h_1 = h_2 b u$. Since $h_1, h_2, b \in L_J$ and $u \in U_{P_J^+}$ with $U_{P_J^+} \cap L_J = \{e\}$, it follows that $u = e$. Therefore, $h_1 = h_2 b$, and projecting to $L_J / B_J^-$ yields $h_1 B_J^- = h_2 B_J^-$. Again by \cite[Proposition 5.2]{MR04} (applied to $L_J$), the map $G^+_{\underline{w_J}_+, \underline{c^{-1} v_J}, \neq 0} \to L_J / B_J^-$ is injective, so $h_1 = h_2$. This establishes the injectivity of the map.

For the inclusion, we use the following containment from \cite[Proposition 5.2]{MR04}:
\begin{align*}
G^-_{\underline{v^J c}_+, \underline{w^J}, \neq 0} &\subseteq B^+ \dot{w}^J B^+ \cap U^- \dot{v}^J \dot{c}, \\
G^+_{\underline{w_J}_+, \underline{c^{-1} v_J}, \neq 0} &= \dot{c}^{-1} G^+_{\underline{c w_J}_+, \underline{v_J}, \neq 0} \subseteq \dot{c}^{-1} \left( B_J^- \dot{v}_J B_J^- \cap U_J^+ \dot{w}_J \right).
\end{align*}
It follows that 
\[
G^J_{(v,w),\neq 0} \,^J B^+ / \,^J B^+ \subseteq \oB^J_{v,w},
\]
completing the proof.
\end{proof}

\end{lemma}

By definition, the sets $G^J_{(v,w),\neq0}\JB^+/\JB^+$ and $G^J_{(v,w),>0}\JB^+/\JB^+$ depend a priori on the choice of reduced expressions $\underline{w^J}$ and $\underline{v_J}$. However, the Marsh-Rietsch parametrization applying to $(L_J/B_J)_{\geq0}$ yields that $G^+_{\underline{w_J}_+,\underline{c^{-1}v_J},>0}\JB^+/\JB^+$ does not depend on the choice of $\underline{c^{-1}v_J}$. Therefore, the set $G^J_{(v,w),>0}\JB^+/\JB^+$ does not depend on the choice of $\underline{v_J}$, and we can even remove the assumption that $\underline{c^{-1}v_J}$ is of the form $(\underline{c^{-1}},\underline{v_J})$. We will show in \S\ref{proof_parametrization} that $\CB^J_{v,w,>0} = G^J_{(v,w),>0}\JB^+/\JB^+$, and thus, it does not dependent on the choice of $\underline{w^J}$ either.

\begin{remark} \label{one_dim_KL_parametrization}
By \Cref{one_dim_KL} and its proof, we have $$\oB^J_{v,w}(\mathbb R) = G^J_{(v,w),\neq0}\JB^+/\JB^+$$ when $\ell^J(w)-\ell^J(v) = 1$.
\end{remark} 

\subsection{Main result}

We state the main results on the totally nonnegative twisted flag variety $\CB^J_{\geq0}$.

\begin{theorem}\label{thm:J-flag-main}
Let $v,w\in W$ with $v\leq^J w$. Then 
\begin{enumerate}
    \item For any choices of reduced expressions $\underline{w^J}$ and $\underline{v_J}$, we have
$$\mathcal B^J_{v,w,>0} = G^J_{(v,w),>0}\JB^+/\JB^+.$$
\item For any $r\in W$ such that $v\leq^Jr\leq^J w$, the map $\sigma_r^J$ restricts to an isomorphism
    $$\sigma_r^J:\mathcal B^J_{v,w,>0}\simeq\mathcal B^J_{v,r,>0}\times\mathcal B^J_{r,w,>0}.$$
    \item $\mathcal B^J_{\geq0}$ is a remarkable polyhedral space with regularity property.
\end{enumerate}
\end{theorem}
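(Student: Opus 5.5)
The plan is to prove (1) and (2) together by a staged induction, following the strategy announced in the introduction, and then obtain (3) from \Cref{thm:J_product}. Write $Y_{v,w}:=G^J_{(v,w),>0}\JB^+/\JB^+$. By \Cref{lem:injective}, $Y_{v,w}\subseteq\oB^J_{v,w}(\mathbb R)$ and $Y_{v,w}\simeq\mathbb R_{>0}^{\ell^J(w)-\ell^J(v)}$. The target statement at each stage is the following:
\begin{enumerate}
\item[(i)] $Y_{v,w}$ is a connected component of $\oB^J_{v,w}(\mathbb R)$ and equals $\CB^J_{v,w,>0}$;
\item[(ii)] $Y_{v,w}\subseteq\dot r\JU^-\JB^+/\JB^+$ for every $v\le^J r\le^J w$, and $\sigma^J_r(Y_{v,w})=Y_{v,r}\times Y_{r,w}$.
\end{enumerate}

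\textbf{Stage 1: $v\in W_J$ and $w\in W^J$.} Here $v^J=1$, $w_J=1$ and $c=1$, so $G^J_{(v,w),>0}=G^-_{\underline{1}_+,\underline{w},>0}\cdot G^+_{\underline{1}_+,\underline{v},>0}$, which is (totally positive part of $U^-_w$)$\cdot$(totally positive part of $U^+_{J,v}$). Both factors lie in $G_{\geq0}$, so $Y_{v,w}\subseteq\CB^J_{\geq0}$. To verify (ii), I would compute directly. Any $r$ with $v\le^J r\le^J w$ factors as $r=r^Jr_J$ with $r^J\le w$ and $r_J\le v$ (the condition involving $u$ collapses here). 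I would then split the product using the Marsh--Rietsch factorizations: the ordinary result \cite{MR04,BH21a} for $\CB^+_{1,w,>0}$ gives containment in $\dot r^J U^- B^+$, and the same result applied in $L_J$ gives containment in $\dot r_J\ldots$. Combining the two yields $Y_{v,w}\subseteq\dot r\JU^-\JB^+/\JB^+$, with $\sigma^J_r$ splitting the parameters as in \cite{BH24}. I expect this stage to be the main obstacle, because the conjugation of the $W_J$-part past the $W^J$-part must stay inside $\JU^-$. If the direct computation fails, I would try first to reduce to the product structure on $G/B^+$ and on $L_J/B_J^-$ separately, using the fibration $\pi:\CB^J\to G/P^+_J$.

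\textbf{Equality with $\CB^J_{v,w,>0}$ in Stage 1.} Given (ii) for all smaller pairs, \Cref{thm:J_product} applies to the component containing $Y_{v,w}$. Since $\CB^J_{\geq0}$ is a closure of $G_{\geq0}$-orbits, I would show that $\bigsqcup_{v,w}Y_{v,w}$ is closed and contains $G_{\geq0}\JB^+/\JB^+$. Closedness follows from the closure relation of \Cref{thm:J_product}(2). Containment follows from the stability of the union under left multiplication by $x_i(a)$, $y_i(a)$ and $T_{>0}$; I would check this via standard rank-one moves, just as Lusztig does in the ordinary case.

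\textbf{Stage 2 ($v\in W$, $w\in W^J$) and Stage 3 (general).} To pass from Stage 1 to Stage 2, I would use $\sigma^J_{v_J}$, noting that $v\le^J v_J\le^J w$ after adjusting. For Stage 3, I would use $\sigma^J_{w^J}$ together with $w\le^J w^J$, in the same way that $\oB^J_{v,w}$ embeds into $\oB^J_{v,w^J}$ in the proof of \Cref{dimension_formula}. Concretely, by Stage 2 and \Cref{preserve_connected_component}, $\sigma^J_{w^J,+}$ maps the known cell $Y_{v,w^J}$ onto a component of $\oB^J_{v,w}$. I would identify this component with $Y_{v,w}$ by comparing parametrizations through the factor $c$, using the rewriting $G^+_{\underline{w_J}_+,\underline{c^{-1}v_J}}=\dot c^{-1}G^+_{\underline{cw_J}_+,\underline{v_J}}$. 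This also gives the independence of the choice of reduced expressions. Then \Cref{alg_cut} yields (ii) for the new pairs, since they appear as faces of cells for which (ii) is already known.

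Finally, (3) follows by applying \Cref{thm:J_product} to each $\CB^J_{v,w,>0}$.
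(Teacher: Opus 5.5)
Your three-stage outline matches the paper's, but the proposal does not supply the substance of any stage, and the steps you do propose would fail as stated.

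\textbf{Stage 1 inclusion property.} Your combinatorial reduction is false. With $v\in W_J$ and $w\in W^J$, the relation $v\le^J r\le^J w$ does not force $r_J\le v$. Take $J=\{j\}$, $i\notin J$ with $a_{ij}\neq0$, $v=1$, $w=s_js_i$ and $r=s_js_is_j$. Then $u=s_j$ witnesses $1\le^J r$ and $u=1$ witnesses $r\le^J w$, yet $r_J=s_j\not\le 1$.

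The ordinary results for $G/B^+$ and for $L_J/B_J^-$ also cannot simply be combined. Write $g\in U^-_{w,>0}$ as $\dot r^Ju^-b$ with $u^-\in U^-$ and $b\in B^+$. The $L_J$-part of $u^-b$ is not totally nonnegative, and it gets multiplied into $h\in U^+_{v,>0}$. So the $L_J$-statement says nothing about $\dot r^{-1}gh$.

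The paper proves \Cref{key_assumption_1} by induction on $\ell(w)$ instead:
\begin{itemize}
\item strip off $y_{i_1}(a)$;
\item replace $r,v$ by $s_{i_1}\circ^J_l r$ and $s_{i_1}\circ^J_l v$, which remain ordered by \Cref{twisted_downward};
\item rewrite $y_{i_1}(a)$ by a rank-one identity so that a factor $\dot r^{-1}x_{i_1}(a^{-1})\dot r\in\JU^-$ splits off;
\item reabsorb the leftover $x_{i_1}(a)$ into a smaller candidate cell via Lusztig's relation $U^-_{w_1,>0}T_{>0}U^+_{w_2,>0}=U^+_{w_2,>0}T_{>0}U^-_{w_1,>0}$.
\end{itemize}

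\textbf{Identification with $\CB^J_{v,w,>0}$.} Your route here is circular. Showing that $\bigsqcup_{v,w}G^J_{(v,w),>0}\JB^+/\JB^+$ is $G_{\ge0}$-stable by rank-one moves is exactly what is unavailable before the parametrization. The $x_i(a)$-action when $s_iv<^Jv$ is obtained in \Cref{x_action} only as a consequence of \Cref{three_sets_match}. The $y_i(a)$-action on candidate cells is never controlled directly. Likewise, applying \Cref{thm:J_product} to ``the component containing $Y_{v,w}$'' requires the inclusion property for all faces $\overline{Y}\cap\oB^J_{v',w'}$. These faces are neither known to be candidate cells nor Stage~1 pairs.

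The paper replaces this with a comparison of connected components, resting on three facts:
\begin{itemize}
\item Every candidate cell lies in $\CB^J_{\ge0}$ (\Cref{candidates_are_nonnegative}), via $a\to0^+$ in $x_{i_k}(a)\dot s_{i_k}h$ and \Cref{springer_fiber_KL}.
\item For $w\in W^J$, the candidate cell is open and closed in $\oB^J_{v,w}(\mathbb R)$ (\Cref{lem:openness}, \Cref{closedness}). Closedness goes through the maps $\phi_{w,s_i}$ to $G/B^+$ and Marsh--Rietsch's non-positive Deodhar components.
\item $\CB^J_{v,w,>0}$ is a component because it is a face of a Stage~1 cell, by \Cref{preserve_connected_component} and \Cref{alg_cut}. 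The relevant relation is $v_J\le^J v$, not $v\le^J v_J$; the latter fails unless $v^J=1$.
\end{itemize}

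\textbf{General $w$.} ``Comparing parametrizations through $c$'' is not an argument. The relevant map is $\sigma^J_{w,+}$, not $\sigma^J_{w^J,+}$, and it cannot be computed explicitly on $G^J_{(v,w^J),>0}$. Moreover, the general candidate cell is not known to be closed. The paper instead proves $\CB^J_{v,w,>0}\subseteq G^J_{(v,w),>0}\JB^+/\JB^+$ by induction on $\ell(w^J)$. It takes limits of sequences in $U^-_{w^J,>0}U^+_{J,\ge0}\JB^+/\JB^+$ and rules out sign-wrong limit points using \Cref{pass_to_partial_flag} and \Cref{positive_with_top_Deodhar}.
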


The difficulty in establishing this theorem is twofold. On one hand, when $W_J$ is infinite, the theory of ''twisted" highest weight representations does not exist. Consequently, we cannot adapt the strategy from \cite{MR04} to establish the parametrization. On the other hand, without the parametrization result at hand, we are not able to describe the action of $G_{\geq0}$ on the candidate set $G^J_{(v,w),>0}\JB^+/\JB^+$, which is important for us to check the key condition
$$G^J_{(v,w),>0}\JB^+/\JB^+\subseteq\dot r\JU^-\JB^+/\JB^+,\text{ for }v\leq^Jr\leq^Jw.$$

Our strategy is to take three steps which depend on the restriction we post on indexing pair $(v,w)$:
\begin{enumerate}
    \item Step 1: $v\in W_J$, $w\in W^J$.
    \item Step 2: $w\in W^J$ and no restriction on $v$.
    \item Step 3: general situation.
\end{enumerate}
At each step, we show that:
\begin{itemize}

\item the candidate set $G^J_{(v,w),>0}\JB^+/\JB^+$ is a connected component of $\oB^J_{v,w}(\mathbb R)$;

\item the candidate set $G^J_{(v,w),>0}\JB^+/\JB^+$ equals to $ \CB^J_{v,w,>0}$ and thus is independent of the choices of reduced expressions $\underline{w^J}$ and $\underline{v_J}$;

\item the candidate set satisfies the inclusion property
$$ G^J_{(v,w),>0}\JB^+/\JB^+\subseteq\dot r\JU^-\JB^+/\JB^+, \quad \text{for all } v\leq^J r\leq^J w.$$

\end{itemize}
These properties at each step enables us to apply the machinery of product structure to establish the same properties in the next step. 

We remark that, in the proof of \Cref{thm:J-flag-main}, we use the parametrization for positive cells of $\CB^+_{\geq0}$ established in \cite{MR04,BH21a}. However, the proof is independent of the fact that $\CB^+_{\geq0}$ is a remarkable polyhedral space with regularity property.

\subsection{Some technical lemmas} \label{tech}
In this subsection, we collect some technical lemmas.

\begin{lemma} \label{conj_positive}
Let $i,j\in I$ and $w\in W$. If $\dot wx_i(a)\dot w^{-1} = x_j(b)$, then $ab\geq0$. Similarly, if $\dot wy_i(a)\dot w^{-1} = y_j(b)$, then $ab\geq0$.

\begin{proof}
It is clear that $a = 0$ if and only if $b = 0$. Without loss of generality, we assume that $a>0$. Then $x_j(b)\dot wB^+/B^+ = \dot wx_i(a)B^+/B^+\in\mathcal B_{w,ws_i,>0}^+$. By the parametrization of positive cells in \cite[Theorem 11.3]{MR04}, it follows that $b > 0$.
\end{proof}

\end{lemma}

\begin{lemma} \label{springer_fiber_KL}
Let $i_0\in I$ such that $u^{-1}(\alpha_{i_0}) > 0$ for every $v^Jc \leq u \leq w^J$. We have
\begin{enumerate}
    \item If $s_{i_0}v^J \in W^J$, then 
    \[
    x_{i_0}(a)h\,^J B^+/\,^J B^+ = h\,^J B^+/\,^J B^+\text{, for all } h \in G^J_{(v,w),>0},\ a \in \mathbb{R}.
    \]
    
    \item If $s_{i_0}v^J = v^Js_j$ for some $j \in J$, then for all $g \in G^-_{\underline{v^Jc}_+,\underline{w^J},>0}$, $g' \in G^+_{\underline{cw_J}_+,\underline{v_J},>0}$, and $a \in \mathbb{R}^\times$,
    \[
    x_{i_0}(a)g\dot{c}^{-1}g'\,^J B^+/\,^J B^+ \in g\dot{c}^{-1}x_j(a\mathbb{R}_{>0})g'\,^J B^+/\,^J B^+.
    \]
    In particular, if $s_jcw_J<s_jv_J$ and $u^{\prime,-1}(\alpha_j) > 0$ for every $s_jcw_J \leq u' \leq s_jv_J$, then
    \[
    x_{i_0}(a)h\,^J B^+/\,^J B^+ = h\,^J B^+/\,^J B^+ \quad \text{for all } h \in G^J_{(v,w),>0},\ a \in \mathbb{R}.
    \]
\end{enumerate}

\begin{remark}
For $v \leq w$ and a reduced expression $\underline{w}$ for $w$, the condition that $u^{-1}(\alpha_{i_0}) > 0$ for every $u$ with $v \leq u \leq w$ is equivalent to $(s_{i_0},\underline{v}_+)$ being a positive subexpression in $(s_{i_0},\underline{w})$.
\end{remark}

\begin{proof}
Let $\underline{w^J} = (s_{i_1}, s_{i_2}, \cdots, s_{i_n})$ and $\underline{v^Jc}_+ = (t_1, t_2, \cdots, t_n)$. Define $v_{(k)} := t_1t_2\cdots t_k$ and $K_{\underline{v^Jc}_+} := \{k \mid v_{(k+1)} = v_{(k)}\}$.

Let $p_J \colon U^+ \to U^+_J$ be the natural projection. Our assumption on $i_0$ implies by \cite[Lemma 2.2.4]{He05} that 
\[
g^{-1}x_{i_0}(a)g \in U^+ \quad \text{for all } a \in \mathbb{R},\ g \in G^-_{\underline{v^Jc}_+,\underline{w^J},>0}.
\]
Since $g'U_{P_J^+}g^{\prime,-1} = U_{P_J^+}$ for $g'\in G^+_{\underline{w_J}_+,\underline{c^{-1}v_J},>0}\subseteq L_J$, we shall consider the interaction between $p_J(g^{-1}x_{i_0}(a)g)$ and $G^+_{\underline{w_J},\underline{c^{-1}v_J},>0}$. The element $p_J(g^{-1}x_{i_0}(a)g)\in U_J^+$ can be written as product of root group elements for some roots in $\Phi_J^+$, and we shall examine the roots appearing in such a product.

Consider
$$R_{\underline{v^Jc}_+,\underline{w^J}} := \left\{\beta\in\mathbb N_+\Phi\bigg|\ \beta =(v^Jc)^{-1}\left(n_0\alpha_{i_0}-\sum_{k\in K_{\underline{v^Jc}_+}}n_kv_{(k)}(\alpha_{i_k})\right),n_0\in\mathbb N_+, n_k\in\mathbb N\right\}.$$
By \cite[Lemma 2.2.4]{He05}, we have
$g^{-1}x_{i_0}(a)g\in\prod_{\beta\in R_{\underline{ v^Jc}_+,\underline{w^J}}\bigcap\Phi^+}U_\beta$.

Our assumptions yield:
\begin{itemize}
    \item since $\underline{v^Jc}_+$ is a positive subexpression in $\underline{w}$, we have $v_{(k)}(\alpha_{i_k}) > 0$ for all $k$;
    \item since $(s_{i_0},\underline{v^Jc}_+)$ is a positive subexpression in $(s_{i_0},\underline{w})$, we have $v_{(k)}(\alpha_{i_k}) \neq \alpha_{i_0}$ for all $k$.
\end{itemize}

Now suppose $\beta \in R_{\underline{v^Jc}_+,\underline{w^J}} \cap \Phi_J^+$. Then
\[
v^Jc(\beta) = n_0\alpha_{i_0} - \sum_{k \in K_{\underline{v^Jc}_+}} n_k v_{(k)}(\alpha_{i_k}).
\]
We have $v^Jc(\beta) > 0$ if and only if $n_0 = 1$ and $n_k = 0$ for all $k$. Otherwise $v^Jc(\beta) < 0$, and then $c(\beta) \in \Phi_J^-$.

By assumption, $s_{i_0}v^Jc>v^Jc$. We consider two cases:
\begin{enumerate}[label=(\roman*)]
    \item $s_{i_0}v^J > v^J$ and $s_{i_0}v^J \in W^J$. Then for $\beta \in \Phi_J^+$, we have $v^Jc(\beta) \neq \alpha_{i_0}$. Thus $\beta \in \Phi_J^- \cap c^{-1}(\Phi_J^-)$. Part (1) now follows from \cite[Lemma 11.8]{MR04} or \cite[Proposition 2.2.3]{He05}.
    
    \item $s_{i_0}v^J = v^Js_j$ for some $j\in J$ and $s_jc > c$. Then for $\beta \in \Phi_J^+$ with $v^Jc(\beta) > 0$, we must have $c(\beta) = \alpha_j$. Thus $\beta \in \Phi_J^+ \cap (s_jc)^{-1}(\Phi_J^-)$. By \cite[Lemma 11.8]{MR04} or \cite[Proposition 2.2.3]{He05}, for every $a \in \mathbb{R}^\times$, there exists $b \in \mathbb{R}^\times$ such that
    \[
    x_{i_0}(a)G^J_{(v,w),>0}\,^J B^+/\,^J B^+ \subseteq G^-_{\underline{v^Jc}_+,\underline{w^J},>0} \dot{c}^{-1} x_j(b) G^+_{\underline{cw_J}_+,\underline{v_J},>0}\,^J B^+/\,^J B^+.
    \]
    Moreover, $ab > 0$ by \Cref{conj_positive}.
\end{enumerate}

The final statement of part (2) also follows from \cite[Lemma 11.8]{MR04} or \cite[Proposition 2.2.3]{He05}.
\end{proof}

\end{lemma}

\begin{corollary} \label{x_action_part_1}
Let $v,w\in W$ such that $v\leq^J w$. Suppose $i\in I$ and $v<^Js_iv$. Then for $a>0$, we have
$$x_i(a)G^J_{(v,w),>0}\JB^+/\JB^+\subseteq G^J_{(v,w),>0}\JB^+/\JB^+.$$
\begin{proof}
Let $c = \min\{c'\in W_J|w_J\leq (c')^{-1}v_J\}$. Then $v^Jc\leq w^J$. By assumption, either $s_iv^J>v^J$ and $s_iv^J\in W^J$, or $s_iv^J = v^Js_j$ and $s_jv_J<v_J$. Since $\ell(c^{-1}v_J) = \ell(c^{-1})+\ell(v_J)$, we have $s_jc>c$. In either case, $s_iv^Jc>v^Jc$.

We fix a reduced expression $\underline{w^J} = (s_{i_1},s_{i_2},\cdots, s_{i_n})$. Denote $\underline{v^Jc}_+ = (t_1,t_2,\cdots,t_n)$. Let $g\in G^-_{\underline{v^Jc}_+,\underline{w^J},>0}$ and $g'\in G_{\underline{w_J},\underline{c^{-1}v_J},>0}^+$. We consider two cases:
\begin{enumerate}[label = (\roman*)]
    \item $s_iv^Jc\nleq w^J$. Then it follows from \cite[Corollary 1]{He09} that $s_iw^J>w^J$ and $s_iv^Jc\leq s_iw^J$, and from \cite[\S1.9(a)]{L21} that $(s_i,\underline{v^Jc}_+)$ is a positive subexpression of $(s_i,\underline{w^J})$. If $s_iv^J\in W^J$, the conclusion follows from \Cref{springer_fiber_KL} (1). Otherwise, $s_iv^J = v^Js_j$ for some $j\in J$ and $s_jv_J<v_J$. By \Cref{springer_fiber_KL} (2), there is $b>0$, such that $x_i(a)g\dot c^{-1}g'\JB^+/\JB^+ = g\dot c^{-1}x_j(b)g'\JB^+/\JB^+$. Now it is sufficient to show that 
    $$x_j(b)g'B_J^-/B_J^-\in G^+_{\underline{cw_J}_+,\underline{v_J},>0}B_J^-/B_J^- = (L_J/B_J)_{cw_J,v_J,>0}.$$ 
    This follows from \cite[Proposition 5.2]{BH21a} (see also \cite[\S1.10]{L21}).
    
    \item $s_iv^Jc\leq w^J$. Let $j$ be the smallest number such that $t_j = s_{i_j}$ and $t_k = 1$ for $k<j$. Let $w_1 = s_{i_1}s_{i_2}\cdots s_{i_{j-1}}$ and $w_2 = s_{i_j}s_{i_{j+1}}\cdots s_{i_n}$. Then there is $b>0$ such that
    $$x_i(a)gh\JB^+/\JB^+\in U^-_{w_1,>0}x_i(b)G^J_{(v,w_2w_J),>0}\JB^+/\JB^+.$$
    It is clear that $s_iv^Jc\nleq w_2$. The conclusion from the previous case applies.
\end{enumerate}
The proof is complete.
\end{proof}
\end{corollary}

Next, we consider an operation on the Weyl group. For $i\in I$ and $w\in W$, we define $s_i\circ^J_lw$ by
\begin{equation*}
    s_i\circ^J_l w := \left\{
\begin{aligned}
    &s_iw &&\text{ if }s_iw\leq^J w,  \\
    & w &&\text{ if }w\leq^J s_iw.
\end{aligned}
\right.
\end{equation*}
It follows from definition that
\begin{itemize}
    \item if $s_i\circ_l^J w = s_iw$, then $w^{-1}(\alpha_i)\in\Phi_J^+\sqcup(\Phi^-\setminus\Phi_J^-)$;
    \item if $s_i\circ_l^J w = w$, then $w^{-1}(\alpha_i)\in\Phi_J^-\sqcup(\Phi^+\setminus\Phi_J^+)$.
\end{itemize}
This combinatorial operation has a geometric description. Write $w' = s_i\circ_l^J w$. It follows from \cite[Corollary 1]{BD94} that
$$\overline{B^-\dot s_iB^-\dot w\JB^+/\JB^+} = \overline{B^-\dot w'\JB^+/\JB^+}.$$

\begin{lemma} \label{twisted_downward}
Let $v,w\in W$ such that $v\leq^J w$ and $i\in I$. Then $s_i\circ^J_l v\leq^J s_i\circ_l^Jw$.

\begin{proof}
Write $v' = s_i\circ_l^J v$ and $w' = s_i\circ_l^Jw$.

Since $v\leq^J w$, we have $B^-\dot v\JB^+/\JB^+\subseteq\overline{B^-\dot w\JB^+/\JB^+}$. For $b\in B^-$, we have
$$b\dot s_iB^-\dot v\JB^+/\JB^+\subseteq b\dot s_i\overline{B^-\dot w\JB^+/\JB^+} = \overline{b\dot s_iB^-\dot w\JB^+/\JB^+}\subseteq\overline{B^-\dot w'\JB^+/\JB^+}.$$
Therefore, $B^-\dot s_iB^-\dot v\JB^+/\JB^+\subseteq\overline{B^-\dot w'\JB^+/\JB^+}$, and then $\overline{B^-v'\JB^+/\JB^+}\subseteq\overline{B^-\dot w'\JB^+/\JB^+}$. We conclude that $v'\leq^J w'$.
\end{proof}

\end{lemma}

\subsection{Candidate sets are nonnegative}
Before tackling the general parametrization, we verify that our candidate sets $G^J_{(v,w),>0}\JB^+/\JB^+$ are contained in $\mathcal{B}^J_{\geq 0}$.

\begin{proposition} \label{candidates_are_nonnegative}
For $v,w\in W$ such that $v\leq^Jw$, we have $$G^J_{(v,w),>0}\JB^+/\JB^+\subseteq\mathcal{B}^J_{\geq 0}.$$

\begin{proof}
We proceed by induction on $\ell(w^J)$. 
For the base case $\ell(w^J) = 0$, we have $w = w_J\in W_J$ and $v = v_J\in W_J$. By \cite[Theorem 4.10]{BH21a} applying to $(L_J/B_J)_{\geq0}$, we have $G^+_{\underline{w_J},\underline{v_J},>0}\JB^+/\JB^+\subseteq\overline{U_{v_J,>0}^+\JB^+/\JB^+}$.

We assume that $\ell(w^J)>0$. Let $c$ be the minimal element in $W_J$ such that $w_J\leq c^{-1}v_J$. Then $v^Jc\leq w^J$. We write $\underline{w} = (s_{i_1},s_{i_2},\cdots,s_{i_n})$ and $\underline{v^Jc}_+ = (t_1,t_2,\cdots,t_n)$. Let $w_k = s_{i_k}s_{i_{k+1}}\cdots s_{i_n}w_J$, $w_k = (w_k)^Jw_J$ and $v_k = t_kt_{k+1}\cdots t_nc^{-1}v_J$ for $k = 1,2,\cdots, n$. Then $c_k = (t_kt_{k+1}\cdots t_n)_J$ and $(v_k)_J = c_kc^{-1}v_J$. Note that $c_{k+1}$ is a suffix of $c_k$, for $k=1,2,\cdots, n-1$.

Set $w_{n+1} = w_J$ and $v_{n+1} = c^{-1}v_J$, we have
$$G^J_{(v_{n+1},w_{n+1}),>0}\JB^+/\JB^+ = G^+_{\underline{w_J},\underline{c^{-1}v_J},>0}\JB^+/\JB^+\subseteq\CB^J_{\geq0}.$$

We assume that $G_{(v_{k+1},w_{k+1}),>0}^J\JB^+/\JB^+\subseteq\CB^J_{\geq0}$. If $t_k = 1$, we have 
$$G^J_{(v_k,w_k),>0}\JB^+/\JB^+ = y_{i_k}(\mathbb R_{>0})G^J_{(v_{k+1},w_{k+1}),>0}\JB^+/\JB^+\subseteq\CB^J_{\geq0}.$$
Now suppose $t_k = s_{i_k}$. Let $h\in G^J_{(v_{k+1},w_{k+1}),>0}$. For $a>0$, consider
$$x_{i_k}(a)\dot s_{i_k}h\JB^+/\JB^+ = \alpha_{i_k}^\vee(a)y_{i_k}(a)x_{i_k}(-a^{-1})h\JB^+/\JB^+.$$

We have two cases:
\begin{enumerate}[label=(\roman*)]
    \item $s_{i_k}v_k^J>v_k^J$. It follows from \Cref{springer_fiber_KL} (1) that 
    $$x_{i_k}(-a^{-1})h\JB^+/\JB^+ = h\JB^+/\JB^+.$$
  \item $s_{i_k}v_{k+1}^J = v_{k+1}^Js_j$ for some $j\in J$, and $s_jc_{k+1} = c_k$. Since $c_{k+1}$ is a suffix of $c_k$ and they are both suffixes of $c$, it follows from \Cref{springer_fiber_KL} (2) that
   $$x_{i_k}(-a^{-1})h\JB^+/\JB^+ = h\JB^+/\JB^+.$$
\end{enumerate}

In both cases, we have by induction:
\[
x_{i_k}(a) \dot{s}_{i_k} h\,^J B^+/\,^J B^+ = \alpha_{i_k}^\vee(a) y_{i_k}(a) h\,^J B^+/\,^J B^+ \in \mathcal{B}^J_{\geq 0}.
\]

Taking the limit as $a \to 0^+$ on the left-hand side, we have $\dot{s}_{i_1} h\,^J B^+/\,^J B^+ \in \mathcal{B}^J_{\geq 0}$, completing the induction.
\end{proof}
\end{proposition}

\subsection{Inclusion property in the case $v\in W_J$ and $w\in W^J$} \label{inclusion_1}
The goal in this subsection is to show that, for $v\in W_J$ and $w\in W^J$, we have
$$G^J_{(v,w),>0}\JB^+/\JB^+\subseteq\dot r\JU^-\JB^+/\JB^+,\text{ for any }v\leq^Jr\leq^Jw.$$

We begin by recalling some fundamental notations and results about totally positive cells in the monoid $G_{\geq0}$. Let $T_{>0}$ be the identity component of $T(\mathbb R)$.

For $w \in W$ with a fixed reduced expression $\underline{w} = s_{i_1}s_{i_2}\cdots s_{i_n}$, define:
\begin{align*}
U^-_{w,>0} &= \{y_{i_1}(a_1)y_{i_2}(a_2)\cdots y_{i_n}(a_n) \mid a_j \in \mathbb{R}_{>0}\}, \\
U^+_{w,>0} &= \{x_{i_1}(a_1)x_{i_2}(a_2)\cdots x_{i_n}(a_n) \mid a_j \in \mathbb{R}_{>0}\}.
\end{align*}
Note that, by definition, $U^\pm_{w,>0} = G^\pm_{\underline{1}_+,\underline{w},>0}$.

Lusztig \cite{L94} showed that these sets are independent of the choice of reduced expression $\underline{w}$, and we have the decompositions:
\[
U^-_{\geq 0} = \bigsqcup_{w \in W} U^-_{w,>0}, \quad U^+_{\geq 0} = \bigsqcup_{w \in W} U^+_{w,>0}.
\]
In particular, when $v\in W_J$ and $w\in W^J$, we have $G_{(v,w)>0}^J = U_{w,>0}^-U_{v,>0}^+$ is independent of the choices of reduced expressions $\underline{w}$ and $\underline{v}$. Moreover, for $w_1, w_2\in W$, we have
\begin{equation}\tag{*}
   U_{w_1,>0}^-T_{>0}U^+_{w_2,>0} = U_{w_2,>0}^+T_{>0}U_{w_1,>0}^-.  
\end{equation}

Now we prove the main result of this subsection.

\begin{proposition} \label{key_assumption_1}
Suppose that $v\in W_J$ and $w\in W^J$. Then for every $r\in W$ such that $v\leq^Jr\leq^J w$, we have 
$$G^J_{(v,w),>0}\JB^+/\JB^+\subseteq\dot r\JU^-\JB^+/\JB^+.$$

\begin{proof}
We argue by induction on $\ell(w)$. For $\ell(w) = 0$, we have $r^J = 1$ and $r_J\leq v$, and the statement follows from \cite[Lemma 5.1]{BH24}. 

Fix a reduced expression $\underline{w} = (s_{i_1},s_{i_2},\cdots,s_{i_n})$. Set $w' = s_{i_1}w$, $r' = s_{i_1}\circ_l^J r$, and $v' = s_{i_1}\circ_l^J v$ (\S\ref{tech}). It follows from \Cref{twisted_downward} that $v'\leq^J r'\leq^J w'$. Let $g\in G_{(v,w),>0}^J$.

We divide the computation into two cases.
\begin{enumerate}[label = (\roman*)]
    \item $r'= r$. Then $v\leq^J r\leq^J w'$, and $\dot r^{-1}y_{i_1}(\mathbb R^\times)\dot r\in\JU^-$. We have  
    $$\dot r^{-1}G_{(v,w),>0}^J\subseteq\JU^-\dot r^{-1}G^J_{(v,w'),>0}.$$
    \item $r' = s_{i_1}r$. Then $\dot r^{-1}x_{i_1}(\mathbb R^\times)\dot r\subseteq\JU^-$. Write $g = y_{i_1}(a)g'$ with $a\in\mathbb R_{>0}$ and $g'\in G_{(v,w'),>0}^J$. Using the identity
    $$y_{i_1}(a) = x_{i}(a^{-1})\dot s_{i_1}x_{i_1}(a)\alpha^\vee_{i_1}(a),$$
    we have
    \begin{align*}
        \dot r^{-1}y_{i_1}(a)g'\JB^+/\JB^+ &=  \left(\dot r^{-1}x_{i_1}(a^{-1})\dot r\right)(\dot r')^{-1}x_{i_1}(a)\alpha^\vee_{i_1}(a)g'\JB^+/\JB^+\\
        &\in\JU^-(\dot r')^{-1}x_{i_1}(a)U_{w',>0}^-U_{v,>0}^+\JB^+/\JB^+.
    \end{align*}
    By \S\ref{inclusion_1} (*), we have
    $x_{i_1}(a)U_{w',>0}^-U_{v,>0}^+\JB^+/\JB^+\subseteq G_{(v',w'),>0}^J\JB^+/\JB^+$, and thus, $\dot r^{-1}g\JB^+/\JB^+ \in\JU^-(\dot r')^{-1}G_{(v',w'),>0}^J\JB^+/\JB^+$.
\end{enumerate}
The statement follows from induction assumption.
\end{proof}

\end{proposition}

\subsection{Connected component in the case $w \in W^J$} \label{special_case}
In this subsection, we show that for $w\in W^J$ and $v\in W$ with $v\leq^Jw$, 
\begin{enumerate}[label = (\alph*)]
\item $G^J_{(v,w),>0}\JB^+/\JB^+$ is a connected component of $\oB^J_{v,w}(\mathbb R)$. \label{special_connected_component}
\end{enumerate}

We establish this result through a series of lemmas, first proving openness and then closedness in the Hausdorff topology of $\oB^J_{v,w}(\mathbb{R})$.

\begin{lemma}\label{lem:openness}
Suppose that $w\in W^J$. The set $G^J_{(v,w),>0}\JB^+/\JB^+$ is an open subset in $\oB^J_{v,w}(\mathbb R)$.  
\begin{proof}
Consider the natural projection $\pi_J \colon \mathcal{B}^J \to \mathcal{P}_J$ and its restriction
\[
\pi'_J \colon \oB^J_{v,w} \to B^+ \dot{w} P_J^+ / P_J^+ \cap B^- \dot{v}^J P_J^+ / P_J^+.
\]
We have
\[
\pi_J(G^J_{(v,w),>0}\,^J B^+/\,^J B^+) = G^-_{\underline{v^J}_+,\underline{w},>0} P_J^+ / P_J^+,
\]
which is open in $B^+ \dot{w} P_J^+ / P_J^+ \cap B^- \dot{v}^J P_J^+ / P_J^+$. 

Therefore, the preimage $(\pi'_J)^{-1}(G^-_{\underline{v^J}_+,\underline{w},>0} P_J^+ / P_J^+)$ is open in $\oB^J_{v,w}(\mathbb{R})$.

Using the Bruhat decomposition $L_J = \bigsqcup_{u \in W_J} B^-_J \dot{u} B_J^-$ and Birkhoff decomposition $L_J = \bigsqcup_{u \in W_J} B^+_J \dot{u} B_J^-$, we obtain the inclusion:
\[
(\pi'_J)^{-1}(G^-_{\underline{v^J}_+,\underline{w},>0} P_J^+ / P_J^+) \subseteq G^-_{\underline{v^J}_+,\underline{w},>0} (U^+_J \cap B^-_J \dot{v}_J B_J^-)\,^J B^+/\,^J B^+.
\]

Since $\pi_J^{-1}(g P_J^+ / P_J^+) \simeq L_J / B_J^-$ for any $g \in G$, the map
\[
G^-_{\underline{v^J}_+,\underline{w},>0} \times (U_J^+ \cap B_J^+ \dot{v}_J B_J^+) \to (\pi'_J)^{-1}(G^-_{\underline{v^J}_+,\underline{w},>0} P_J^+ / P_J^+), \quad (g,h) \mapsto g h\,^J B^+/\,^J B^+
\]
is an isomorphism. Note that $U_{v_J,>0}^+$ is an open subset of $U_J^+ \cap B_J^+ \dot{v}_J B_J^+$, and its image under this map is exactly $G^J_{(v,w),>0}\,^J B^+/\,^J B^+$. 

We conclude that $G^J_{(v,w),>0}\,^J B^+/\,^J B^+$ is open in $(\pi'_J)^{-1}(G^-_{\underline{v^J}_+,\underline{w},>0} P_J^+ / P_J^+)$, and hence open in $\oB^J_{v,w}(\mathbb{R})$.
\end{proof}
\end{lemma}

To proceed, we introduce an auxiliary map that will help us study the closeness properties. For $w\in W^J$ and $z\in W_J$, let
$$\phi_{w,z}:B^+\dot w \JB^+/\JB^+\to B^+\dot w\dot z B^+/B^+,\ \ b\dot w \JB^+/\JB^+\mapsto b\dot w\dot z B^+/B^+.$$

\begin{lemma}\label{lem:well-defined}
The map $\phi_{w,z}$ is well-defined.
\begin{proof}
Suppose $u_1, u_2 \in U^+$ satisfy $u_1 \dot{w}\,^J B^+/\,^J B^+ = u_2 \dot{w}\,^J B^+/\,^J B^+$. Then $u_1^{-1} u_2 \in U^+ \cap \dot{w}\,^J U^+ \dot{w}^{-1}$. For any $\alpha \in \Phi_J^-$, we have $\alpha \notin w^{-1}(\Phi^+)$ since $w \in W^J$. This implies:
\[
(U^+ \cap \dot{w}\,^J U^+ \dot{w}^{-1}) \dot{w} \dot{z} = \dot{w} (\dot{w}^{-1} U^+ \dot{w} \cap \,^J U^+) \dot{z} \subseteq \dot{w} U_{P_J^+} \dot{z} \subseteq \dot{w} \dot{z} B^+.
\]
Therefore, $u_1 \dot{w} \dot{z} B^+ / B^+ = u_2 \dot{w} \dot{z} B^+ / B^+$, proving that $\phi_{w,z}$ is well-defined.
\end{proof}
\end{lemma}

\begin{lemma}\label{twisted_reduction}
Suppose that $w \in W^J$ and $z \in W_J$. For any $g \in B^+ \dot{w} B^+$, we have
\[
\phi_{w,z}(g\,^J B^+/\,^J B^+) = g \dot{z} B^+ / B^+.
\]
\begin{proof}
Note that $B^+ \dot{w} B^+ = B^+ \dot{w} U_{P_J^+}$. Write $g = b \dot{w} u$ with $b \in B^+$ and $u \in U_{P_J^+}$. Then $g\,^J B^+/\,^J B^+ = b \dot{w}\,^J B^+/\,^J B^+$, and
\[
\phi_{w,z}(g\,^J B^+/\,^J B^+) = b \dot{w} \dot{z} B^+ / B^+ = b \dot{w} \dot{z} (\dot{z}^{-1} u \dot{z}) B^+ / B^+ = g \dot{z} B^+ / B^+,
\]
where the last equality follows since $\dot{z}^{-1} u \dot{z} \in U_{P_J^+} \subseteq B^+$.
\end{proof}
\end{lemma}

We now establish the closeness of our candidate set.

\begin{lemma} \label{closedness}
Suppose that $w\in W^J$. The set $G^J_{(v,w),>0}\JB^+/\JB^+$ is a closed subset in $\oB^J_{v,w}(\mathbb R)$.

\begin{proof}
Consider the projection $\pi_J \colon \mathcal{B}^J \to \mathcal{P}_J$ and its restriction
\[
\pi_J^{\prime\prime} \colon \oB^J_w = B^+ \dot{w}\,^J B^+/\,^J B^+ \to B^+ \dot{w} P_J^+ / P_J^+.
\]
Note that $\pi_J^{-1}(g P_J^+ / P_J^+) \simeq L_J / B_J^-$ for any $g \in G$. By the Schubert decomposition of $L_J / B_J^-$, we have
\[
(\pi_J^{\prime\prime})^{-1}(g P_J^+) = g U_J^+\,^J B^+/\,^J B^+.
\]

By the parametrization of positive cells \cite[Theorem 11.3]{MR04}, we have
\[
\mathcal{P}_{J,\geq 0} \cap B^+ \dot{w} P_J^+ / P_J^+ = \bigsqcup_{v \leq w} G^-_{\underline{v}_+,\underline{w},>0} P_J^+ / P_J^+,
\]
which is closed in $B^+ \dot{w} P_J^+ / P_J^+$. Thus,
\[
C:= (\pi_J^{\prime\prime})^{-1}(\mathcal{P}_{J,\geq 0} \cap B^+ \dot{w} P_J^+ / P_J^+) = \bigsqcup_{v\leq w}G^-_{\underline{v}_+,\underline{w},>0}U_J^+\JB^+/\JB^+
\]
is closed in $\oB^J_w(\mathbb{R})$.

For $w'\in W$, let $\CB^+_{w',\geq 0} := \mathcal{B}^+_{\geq 0} \cap B^+ \dot{w}' B^+ / B^+$. For $i\in J$, let $p_i:U_J^+\to U_{\alpha_i}$ be the natural projection. Denote $(U_J^+)^{p\geq0} =\bigcap_{i\in J}p_i^{-1}(U_{\alpha_i}\cap U^+_{\geq0})$. We claim that
\begin{equation}\tag = {*}
C \cap \bigcap_{i \in J} \phi_{w,s_i}^{-1}(\mathcal{B}^+_{w s_i,\geq 0}) = \bigsqcup_{v_1 \in W^J, v_1 \leq w} G^-_{\underline{v_1}_+,\underline{w},>0} (U_J^+)^{p\geq0}\,^J B^+/\,^J B^+.
\end{equation}

Take $g \in G^-_{\underline{v}_+,\underline{w},>0}$, with $v_J \neq e$ and $h\in U^+_J$. Choose $i \in J$ such that $v_J s_i < s_i$. By Lemma~\ref{twisted_reduction},
\[
\phi_{w,s_i}(gh\,^J B^+/\,^J B^+) \in G^-_{\underline{v}_+,\underline{w},>0} x_i(\mathbb R) \dot{s}_i B^+ / B^+.
\]
This set lies in a Deodhar component of $\oB_{v s_i, w s_i}^+$ that is not of top dimension (see \cite[Definition 4.1 \& Proposition 5.2]{MR04}). By \cite[Lemma 11.6]{MR04}, $\phi_{w,s_i}(g\,^J B^+/\,^J B^+) \notin \CB^+_{w,\geq 0}$. On the other hand, suppose that $g\in G_{\underline{v_1},+,\underline{w},>0}^-$ for $v_1\in W^J$. Then
$$\phi_{w,s_i}(gh\JB^+/\JB^+) \in G^-_{\underline{v_1}_+,\underline{w},>0}p_i(h)\dot s_iB^+/B^+,$$
and $\phi_{w,s_i}(gh\JB^+/\JB^+)\in\CB_{ws_i,\geq0}^+$ if and only if $p_i(h)\in U_{\alpha_i}\cap U^+_{\geq0}$. The claim (*) is proved.

Now, for $v\leq^J w$, let $$C_{v,w}:=\oB^J_{v,w}(\mathbb R)\cap C\cap\bigcap_{i\in J}\phi^{-1}_{w.s_i}(\CB_{ws_i,\geq0}^+).$$ 
Since $C\cap\bigcap_{i\in J}\phi_{w,s_i}^{-1}(\CB^+_{ws_i,\geq0})$ 
is a closed subset of $\oB^J_w(\mathbb R)$, we know that $C_{v,w}$ is a closed subset of $\oB^J_{v,w}(\mathbb R)$. Using the decomposition $L_J/B_J^- = \bigsqcup_{z\in W_J} B^-_J\dot zB_J^-/B_J^-$, we see that
\begin{align*}
C_{v,w} &= G^-_{\underline{v^J}_+,\underline{w},>0}\left((U_J^+)^{p\geq0}\cap B^-_J\dot v_JB_J^-\right)\JB^+/\JB^+ \\
&\simeq G^-_{\underline{v^J}_+,\underline{w},>0}\times \left((U_J^+)^{p\geq0}\cap B^-_J\dot v_JB_J^-\right).
\end{align*}
Since $U_{v_J,>0}^+$ is a closed subset in $U^+_J\cap B_J^-\dot v_J B_J^-$ and $U_{v_J,>0}^+\subseteq (U_J^+)^{p\geq0}$, it is also a closed subset in $(U_J^+)^{p\geq0}\cap B_J^-\dot vB_J^-$. We conclude that $G^-_{\underline{v^J}_+,\underline{w},>0}U^+_{v_J,>0}\JB^+/\JB^+$ is a closed subset in $\oB^J_{v,w}(\mathbb R)$.
\end{proof}

\end{lemma}

Since $G^J_{(v,w),>0}\,^J B^+/\,^J B^+ \simeq \mathbb{R}_{>0}^{\ell^J(w) - \ell^J(v)}$ is connected, the combination of Lemmas~\ref{lem:openness} and~\ref{closedness} establishes claim~\ref{special_connected_component}.

\subsection{parametrization in the case $w \in W^J$}
It follows from \cite[\S 2.12]{L94} that we can express the totally nonnegative monoid as:
\[
G_{\geq 0} = \bigsqcup_{w' \in W} U^-_{w',>0} U^+_{\geq 0} T_{>0} = \bigsqcup_{w \in W^J} U^-_{w,>0} U^+_{\geq 0} (U^-_{\geq 0} \cap U_J^-) T_{>0}.
\]

Set $U_{J,\geq 0}^+ := U_J^+ \cap U_{\geq 0}^+$. It is straightforward to verify that $U^+_{\geq 0}\,^J B^+/\,^J B^+ = U^+_{J,\geq 0}\,^J B^+/\,^J B^+$. We conclude that:
\[
\mathcal{B}^J_{\geq 0} = \overline{\bigsqcup_{w \in W^J} U^-_{w,>0} U^+_{J,\geq 0}\,^J B^+/\,^J B^+} = \overline{\bigsqcup_{\substack{w \in W^J \\ v \in W_J}} U^-_{w,>0} U^+_{v,>0}\,^J B^+/\,^J B^+}.
\]

We now establish the parametrization in the special case $w \in W^J$.

\begin{proposition} \label{three_sets_match}
Let $w \in W^J$ and $v \in W$ with $v \leq^J w$. The following three sets are equal:
\begin{enumerate}
    \item $\overline{U^-_{w,>0} U^+_{v',>0}\,^J B^+/\,^J B^+} \cap \oB^J_{v,w}(\mathbb{R})$ for any $v' \in W_J$ with $v' \leq^J v$;
    \item $\mathcal{B}^J_{v,w,>0}$;
    \item $G^J_{(v,w),>0}\,^J B^+/\,^J B^+$.
\end{enumerate}
In particular, the set $G^J_{(v,w),>0}\,^J B^+/\,^J B^+$ is independent of the choices of reduced expressions $\underline{w}$ and $\underline{v_J}$.

\begin{proof}
We prove the equalities through a cycle of inclusions.

\medskip\noindent
\textbf{(1) $=$ (2):} By definition,
\[
\mathcal{B}^J_{v,w,>0} = \overline{\bigcup_{\substack{w \in W^J \\ v' \in W_J}} U^-_{w,>0} U^+_{v',>0}\,^J B^+/\,^J B^+} \bigcap \oB^J_{v,w}(\mathbb{R}),
\]
and (1) is clearly contained in (2).

Now consider $v' \in W_J$ and $w' \in W^J$ such that $v' \leq^J v \leq^J w \leq^J w'$. By \Cref{key_assumption_1} and \Cref{preserve_connected_component}, the set $\overline{U^-_{w',>0} U^+_{v',>0}\,^J B^+/\,^J B^+} \cap \oB^J_{v',w}(\mathbb{R})$ is a connected component of $\oB^J_{v',w}(\mathbb{R})$, and it clearly contains $U^-_{w,>0} U^+_{v',>0}\,^J B^+/\,^J B^+$. Since the latter is also a connected component of $\oB^J_{v',w}(\mathbb{R})$ as in \S\ref{special_case}, we have:
\begin{align*}
\overline{U^-_{w',>0} U^+_{v',>0}\,^J B^+/\,^J B^+} \cap \oB^J_{v',w}(\mathbb{R}) 
&= \sigma^J_{w,+}(U^-_{w',>0} U^+_{v',>0}\,^J B^+/\,^J B^+) \\
&= U^-_{w,>0} U^+_{v',>0}\,^J B^+/\,^J B^+.
\end{align*}

By \Cref{key_assumption_1} and \Cref{preserve_connected_component} again, we have
\[
\dot{w}\,^J B^+/\,^J B^+ \in \overline{U^-_{w',>0} U^+_{v',>0}\,^J B^+/\,^J B^+}.
\]
Then by Lemma~\ref{alg_cut}, we obtain:
\[
\overline{U^-_{w',>0} U^+_{v',>0}\,^J B^+/\,^J B^+} \cap \oB^J_{v,w}(\mathbb{R}) = \overline{U^-_{w,>0} U^+_{v',>0}\,^J B^+/\,^J B^+} \cap \oB^J_{v,w}(\mathbb{R}).
\]
This shows that (1) $=$ (2), and in particular, it is a connected component of $\oB^J_{v,w}(\mathbb R)$.

\medskip\noindent
\textbf{(2) $=$ (3):} 
By \Cref{candidates_are_nonnegative}, we have (3) $\subseteq $ (2). By our proof of (1) $=$ (2), the set in (2) is a connected component of $\oB^J_{v,w}(\mathbb{R})$. Since (3) is also a connected component of $\oB^J_{v,w}(\mathbb{R})$, we conclude that (2) $=$ (3), completing the proof.
\end{proof}
\end{proposition}

\subsection{Inclusion property in the case $w\in W^J$} In this subsection, we show the inclusion property in the case $w\in W^J$. Recall the operation $\circ_l^J$ from \S\ref{tech}. 
\begin{lemma}\label{x_action}
Let $w\in W^J$, $v\in W$ such that $v\leq^J w$. For $i\in I$ and $a>0$, we have
$$x_i(a)G^J_{(v,w),>0}\JB^+/\JB^+\subseteq G^J_{(s_i\circ_l^Jv,w),>0}\JB^+/\JB^+.$$

\begin{proof}
Let $v' = s_i\circ_l^J v$. If $v' = v$, the conclusion follows from \Cref{x_action_part_1}. We assume that $v' = s_iv$.
By definition, we have $x_i(a)\CB^J_{\geq0}\subseteq\CB^J_{\geq0}$. By \cite[Corollary 1]{BD94}, we have $x_i(a)\oB^J_{v,w}(\mathbb R)\subseteq\oB^J_{v',w}(\mathbb R)$ Therefore,  $x_i(a)\CB^J_{v,w,>0}\subseteq\CB^J_{v',w,>0}$. The statement now follows from \Cref{three_sets_match}.
\end{proof}

\end{lemma}

\begin{proposition} \label{key_assumption_2}
Suppose that $w\in W^J$. Then for every $r\in W$ such that $v\leq^Jr\leq^J w$, we have 
$$G^J_{(v,w),>0}\JB^+/\JB^+\subseteq\dot r\JU^-\JB^+/\JB^+.$$

\begin{proof}
The proof is similar to \Cref{key_assumption_1}, except that there is one more case to deal with. We argue by induction on $\ell(w)$. For $\ell(w) = 0$, the statement follows from \cite[Lemma 5.1]{BH24}. 

Fix a reduced expression $\underline{w} = (s_{i_1},s_{i_2},\cdots,s_{i_n})$. Set $w' = s_{i_1}w$, $r' = s_{i_1}\circ_l^J r$, and $v' = s_{i_1}\circ_l^J v$. It follows from \Cref{twisted_downward} that $v'\leq^J r'\leq^J w'$. 

Write $g = g_1g_2\cdots g_nh\in G^-_{\underline{v^J}_+,\underline{w},>0}U^+_{v_J,>0} = G^J_{(v,w),>0}$. We consider three cases.
\begin{enumerate}[label = (\roman*)]
    \item $r'= r$. Then $v^J\leq r^J\leq s_{i_1}w$. Consequently, $v\leq^J r\leq^J w'$ and $g_1\in y_{i_1}(\mathbb R_{>0})$. Similar to (i) in the proof of \Cref{key_assumption_1}, we have  
    $$\dot r^{-1}G_{(v,w),>0}^J\subseteq\JU^-\dot r^{-1}G^J_{(v,w'),>0}.$$
    \item $r' = s_{i_1}r$ and $g_1 = y_{i_1}(a)$. Similar to (ii) in the proof of \Cref{key_assumption_1}, we have
    $$\dot r^{-1}g\JB^+/\JB^+\in\JU^-(\dot r')^{-1}x_{i_1}(a)G^J_{(v,w'),>0}\JB^+/\JB^+.$$
    By \Cref{x_action}, we have
    $x_{i_1}(a)G^J_{(v,w'),>0}\JB^+/\JB^+\subseteq G_{(v',w'),>0}^J\JB^+/\JB^+$, and thus $\dot r^{-1}g\JB^+/\JB^+ \in\JU^-(\dot r')^{-1}G_{(v',w'),>0}^J\JB^+/\JB^+$.
    \item $r' = s_{i_1}r$ and $g_1 = \dot s_{i_1}$. Then $s_{i_1}v^J<v^J$, and thus $v' = s_{i_1}v$. Therefore, $g' = g_1^{-1}g\in G^J_{(v',w'),>0}$, and we have
    $$\dot r^{-1}g = (\dot r')^{-1}g'\in(\dot r')^{-1}G^J_{(v',w'),>0}.$$
\end{enumerate}
The statement follows from induction assumption.
\end{proof}

\end{proposition}

\subsection{Connected component in general situation}
In the following subsections, we remove the assumption $w \in W^J$ and consider the general case. 

\begin{proposition} \label{positive_connected_component}
The subset $\mathcal B^J_{v,w,>0}$ is a connected component of $\oB^J_{v,w}(\mathbb R)$. Moreover, $\mathcal B^J_{v,w,>0} = \overline{\mathcal B^J_{v,w^J,>0}}\cap\oB^J_{v,w}(\mathbb R)$.

\begin{proof}
By \Cref{three_sets_match}, $\mathcal{B}^J_{v,w^J,>0}$ is a connected component of $\oB^J_{v,w^J}(\mathbb{R})$. \Cref{key_assumption_2} and \Cref{preserve_connected_component} imply that $\overline{\mathcal{B}^J_{v,w^J,>0}} \cap \oB^J_{v,w}(\mathbb{R})$
is a connected component of $\oB^J_{v,w}(\mathbb{R})$.

Now consider $v' \in W_J$ and $w' \in W^J$ such that $v' \leq^J v \leq^J w \leq^J w'$. Note that $w^J \leq^J w'$. By \Cref{key_assumption_2} and \Cref{preserve_connected_component}, we have:
\[
\dot{w}^J\,^J B^+/\,^J B^+ \in \overline{U^-_{w',>0} U^+_{v',>0}\,^J B^+/\,^J B^+}
\]
and
\[
\dot{v}\,^J B^+/\,^J B^+ \in \overline{U^-_{w^J,>0} U^+_{v',>0}\,^J B^+/\,^J B^+}.
\]

Applying Lemma~\ref{alg_cut}, we obtain:
\begin{align*}
\overline{U^-_{w',>0} U^+_{v',>0}\,^J B^+/\,^J B^+} \cap \oB^J_{v,w}(\mathbb{R}) 
&= \overline{U^-_{w^J,>0} U^+_{v',>0}\,^J B^+/\,^J B^+} \cap \oB^J_{v,w}(\mathbb{R}) \\
&= \overline{\mathcal{B}^J_{v,w^J,>0}} \cap \oB^J_{v,w}(\mathbb{R}).
\end{align*}

We conclude that $\mathcal{B}^J_{v,w,>0} = \overline{\mathcal{B}^J_{v,w^J,>0}} \cap \oB^J_{v,w}(\mathbb{R})$ and that it is a connected component of $\oB^J_{v,w}(\mathbb{R})$.
\end{proof}
\end{proposition}

\begin{corollary}\label{cor:one-dim-positive}
Suppose $\ell^J(w) - \ell^J(v) = 1$. Then
\[
\mathcal{B}^J_{v,w,>0} = G^J_{(v,w),>0}\,^J B^+/\,^J B^+.
\]

\begin{proof}
Since $\ell^J(w) - \ell^J(v) = 1$, \Cref{one_dim_KL} and \Cref{one_dim_KL_parametrization} imply:
\[
\oB^J_{v,w}(\mathbb{R}) = G^J_{(v,w),\neq 0}\,^J B^+/\,^J B^+ \simeq \mathbb{R}^\times.
\]
By \Cref{candidates_are_nonnegative}, we have $G^J_{(v,w),>0}\,^J B^+/\,^J B^+ \subseteq \mathcal{B}^J_{v,w,>0}$. Since $\mathcal{B}^J_{v,w,>0}$ is a connected component of $\oB^J_{v,w}(\mathbb{R})$, the equality follows.
\end{proof}
\end{corollary}


\begin{corollary} \label{positive_with_top_Deodhar}
Let $v,w\in W$ with $v\leq^J w$. Then
$$\mathcal B^J_{v,w,>0}\cap G^J_{(v,w),\neq0}\JB^+/\JB^+\subseteq G^J_{(v,w),>0}\JB^+/\JB^+.$$

\begin{proof}
Consider the partial flag variety $\mathcal{P}_J := G/P_J^+$ and its totally nonnegative part $\mathcal{P}_{J,\geq 0} = \overline{G_{\geq 0} P_J^+ / P_J^+}$. The natural projection $\pi \colon \mathcal{B}^J \to \mathcal{P}_J$ is proper, so $\pi(\mathcal{B}^J_{\geq 0}) = \mathcal{P}_{J,\geq 0}$.

We have the inclusion:
\[
\pi\left(\mathcal{B}^J_{v,w,>0} \cap G^J_{(v,w),\neq 0}\,^J B^+/\,^J B^+\right) \subseteq \mathcal{P}_{J,\geq 0} \cap G^-_{\underline{v^J c}_+,\underline{w^J},\neq 0} P_J^+ / P_J^+.
\]

By \cite[Proposition 5.5(1)]{BH24}, the right-hand side equals $G^-_{\underline{v^J c}_+,\underline{w^J},>0} P_J^+ / P_J^+$. Therefore,
\[
\mathcal{B}^J_{v,w,>0} \cap G^J_{(v,w),\neq 0}\,^J B^+/\,^J B^+ \subseteq G^-_{\underline{v^J c}_+,\underline{w^J},>0} \dot{c}^{-1} G^+_{\underline{c w_J}_+,\underline{v_J},\neq 0}\,^J B^+/\,^J B^+.
\]

Suppose, for contradiction, that there exist $g \in G^-_{\underline{v^J c}_+,\underline{w^J},>0}$ and $h \in G^+_{\underline{c w_J}_+,\underline{v_J},\neq 0} \setminus G^+_{\underline{c w_J}_+,\underline{v_J},>0}$ such that
\[
g \dot{c}^{-1} h\,^J B^+/\,^J B^+ \in \mathcal{B}^J_{v,w,>0}.
\]
Let $C$ be the connected component of $G^+_{\underline{c w_J}_+,\underline{v_J},\neq 0}$ containing $h$. Since $\mathcal{B}^J_{v,w,>0}$ is a connected component of $\oB^J_{v,w}(\mathbb{R})$, we must have:
\[
G^-_{\underline{v^J c}_+,\underline{w^J},>0} \dot{c}^{-1} C\,^J B^+/\,^J B^+ \subseteq \mathcal{B}^J_{v,w,>0} \subseteq \mathcal{B}^J_{\geq 0}.
\]

It follows that $\dot{v}^J C\,^J B^+/\,^J B^+ \subseteq \overline{G^-_{\underline{v^J c}_+,\underline{w^J},>0} \dot{c}^{-1} C\,^J B^+/\,^J B^+} \subseteq \mathcal{B}^J_{\geq 0}$.

Let $(a_1, a_2, \cdots, a_l)$ be the natural coordinate system on $C$. By assumption, there exists $1 \leq j \leq l$ such that $a_j < 0$. Taking the limits $a_i \to 0$ for $i < j$ and $a_i \to \infty$ for $i > j$, we find $v'_J, w'_J \in W_J$ with $w'_J \leq v'_J$ such that:
\[
\dot{v}^J G^+_{w'_J, v'_J, < 0}\,^J B^+/\,^J B^+ \subseteq \mathcal{B}^J_{\geq 0}.
\]
This contradicts Corollary~\ref{cor:one-dim-positive}, completing the proof.
\end{proof}

\end{corollary}

We require one more technical lemma for the proof of Theorem~\ref{thm:J-flag-main} (1).

\begin{lemma} \label{pass_to_partial_flag}
Suppose $w \in W^J$ with reduced expression $\underline{w} = (s_{i_1}, s_{i_2}, \cdots, s_{i_n})$, and let $w' = s_{i_1} w$. If
\[
\dot{s}_{i_1} G^-_{\underline{v}_+,\underline{w'},>0} P_J^+ / P_J^+ \cap \mathcal{P}_{J,\geq 0} \neq \emptyset,
\]
then $(s_{i_1}, \underline{v}_+)$ is a positive subexpression of $s_{i_1} v$ in $\underline{w}$.

\begin{proof}
We first show that $\dot{s}_{i_1} G^-_{\underline{v}_+,\underline{w'},>0} B^+ / B^+$ lies in a single open Richardson variety. Clearly, $\dot{s}_{i_1} G^-_{\underline{v}_+,\underline{w'},>0} B^+ / B^+ \subseteq \oB^+_w(\mathbb{R})$. It remains to show this set lies in a single opposite Bruhat cell.

Let $\underline{v}_+ = (t_2, t_3, \cdots, t_n)$ and define $v_{(j)} = t_2 t_3 \cdots t_j$. For each $\alpha \in \Phi$, fix an isomorphism $u_\alpha \colon \mathbb{R} \xrightarrow{\sim} U_\alpha$, with $u_{\alpha_i} = x_i$ and $u_{-\alpha_i} = y_i$ for every simple root $\alpha_i$. 

An element of $\dot{s}_{i_1} G^-_{v,w',>0} B^+ / B^+$ can be written as:
\[
h B^+ / B^+ = \dot{s}_{i_1} \prod_{\substack{1 \leq j \leq n-1 \\ v_{(j)} = v_{(j+1)}}} u_{-v_j(\alpha_{i_{j+1}})}(b_j) \dot{v} B^+ / B^+.
\]
Since $\underline{v}_+$ is positive, we have $v_{(j)}(\alpha_{i_{j+1}}) > 0$ for all $2 \leq j \leq n$. We have the following three cases:

\begin{enumerate}[label = (\roman*)]
\item $v_{(j)}(\alpha_{i_{j+1}}) \neq \alpha_{i_1}$ for all $1 \leq j \leq n-1$. Then
    \[
    \dot{s}_{i_1} \prod u_{-v_{(j)}(\alpha_{i_{j+1}})}(b_j) \dot{v} B^+ / B^+ \in U^- \dot{s}_{i_1} \dot{v} B^+ / B^+.
    \]
    This condition is equivalent to $(s_{i_1}, \underline{v}_+)$ being a positive subexpression in $\underline{w}$.

\item $v_{(j)}(\alpha_{i_{j+1}}) = \alpha_{i_1}$ for some $j$, and $s_{i_1} v > v$. In this case, we must have $v_j = v_{j+1}$ (otherwise $v_{j+1} = v_j s_{i_{j+1}}$ would imply $s_{i_1} v_{j+1} < v_{j+1}$ and hence $s_{i_1} v < v$). Moreover, $b_j > 0$ by Lemma~\ref{conj_positive}. Then:
    \begin{align*}
        h B^+ / B^+ &\in U^- \dot{s}_{i_1} y_{i_1}(b) \dot{v} B^+ / B^+ \\
        &= U^- y_{i_1}(-b) x_{i_1}(b^{-1}) \dot{v} B^+ / B^+ \\
        &\subseteq U^- \dot{v} B^+ / B^+
    \end{align*}
for some $b>0$.

\item $v_{(j)}(\alpha_{i_{j+1}}) = \alpha_{i_1}$ for some $j$, and $s_{i_1} v < v$. Then
    \[
    h B^+ / B^+ \in \dot{s}_{i_1} U^- \dot{v} B^+ / B^+ \subseteq U^- \dot{s}_{i_1} \dot{v} B^+ / B^+.
    \]
\end{enumerate}

In each case, there exists $v' \leq w$ such that:
\[
\dot{s}_{i_1} G^-_{\underline{v}_+,\underline{w'},>0} P_J^+ / P_J^+ \subseteq \pi(\oB^+_{v',w}(\mathbb{R})).
\]

By assumption, $\dot{s}_{i_1} G^-_{\underline{v}_+,\underline{w'},>0} P_J^+ / P_J^+ \cap \mathcal{P}_{J,\geq 0} \neq \emptyset$. Since the left-hand side is connected and $\pi(\oB^+_{v',w}(\mathbb{R})) \cap \mathcal{P}_{J,\geq 0}$ is a connected component of $\pi(\oB^+_{v',w}(\mathbb{R}))$ (see \cite[\S 5.2]{BH24}), we have:
\[
\dot{s}_{i_1} G^-_{\underline{v}_+,\underline{w'},>0} P_J^+ / P_J^+ \subseteq \pi(\oB^+_{v',w}(\mathbb{R})) \cap \mathcal{P}_{J,\geq 0} \subseteq \mathcal{P}_{J,\geq 0}.
\]

We now show that only case (i) can occur. Suppose instead we are in case (ii) or (iii). Let $j$ be the smallest index such that $v_{(j)}(\alpha_{i_{j+1}}) = \alpha_{i_1}$. Then:
\[
(s_{i_1}, t_2, \cdots, t_{j-1}, s_{i_j}, s_{i_{j+1}}, \cdots, s_{i_n})
\]
is a distinguished but non-positive subexpression in $\underline{w}$ (see \cite[Definition 3.3]{MR04}). Let $D$ be the corresponding Deodhar component (see \cite[Definition 4.1]{MR04}). Then:
\[
\overline{\dot{s}_{i_1} G^-_{\underline{v}_+,\underline{w'},>0} P_J^+ / P_J^+} \cap D \neq \emptyset.
\]
But we have shown $\dot{s}_{i_1} G^-_{\underline{v}_+,\underline{w'},>0} P_J^+ / P_J^+ \subseteq \mathcal{P}_{J,\geq 0}$, and since $\mathcal{P}_{J,\geq 0}$ is closed, we get $D \cap \mathcal{P}_{J,\geq 0} \neq \emptyset$, contradicting \cite[Lemma 11.5]{MR04}.
\end{proof}

\end{lemma}

\subsection{Proof of Theorem \ref{thm:J-flag-main} (1)} \label{proof_parametrization}
We first note that $\mathcal B^J_{v,w,>0}$ is a connected component of $\oB^J_{v,w}(\mathbb R)$ by \Cref{positive_connected_component}, and $G^J_{(v,w),>0}\JB^+/\JB^+$ is a connected subset of $\oB^J_{v,w}(\mathbb R)$. To establish their equality, it suffices to prove the inclusion $\mathcal B^J_{v,w,>0}\subseteq G^J_{(v,w),>0}\JB^+/\JB^+$. 

By \Cref{three_sets_match} and \Cref{positive_connected_component}, we have:
\[
\mathcal{B}^J_{v,w,>0} \subseteq \overline{\mathcal{B}^J_{v,w^J,>0}} \subseteq \overline{\mathcal{B}^J_{v_J,w^J,>0}}.
\]
Thus it suffices to show that:
\[
\overline{U^-_{w^J,>0} U^+_{J,\geq 0}\,^J B^+/\,^J B^+} \cap B^+ \dot{w}\,^J B^+/\,^J B^+ \subseteq \bigsqcup_{v \leq^J w} G^J_{(v,w),>0}\,^J B^+/\,^J B^+.
\]

We proceed by induction on $\ell(w^J)$. 

\noindent
\textbf{Base case:} When $\ell(w^J) = 0$, we have the identification $L_J\,^J B^+/\,^J B^+ \simeq L_J / B_J^-$, and the claim follows from the corresponding result for ordinary totally nonnegative flag varieties \cite[\S 5.1]{BH24}.

\noindent
\textbf{Inductive step:} Suppose $\underline{w^J} = (s_{i_1}, s_{i_2}, \cdots, s_{i_n})$. Consider a sequence:
\[
g_k\,^J B^+ = y_{i_1}(a_{1,k}) y_{i_2}(a_{2,k}) \cdots y_{i_n}(a_{n,k}) u_k\,^J B^+
\]
$U^-_{w^J,>0} U^+_{J,\geq 0} \,^J B^+/\JB^+$. Passing to a subsequence if necessary, we may assume $\lim_{k \to \infty} a_{1,k}$ exists in $\mathbb{R}_{\geq 0} \cup \{\infty\}$.

If $\lim_{k \to \infty} a_{1,k} \in \mathbb{R}_{\geq 0}$, then:
    \[
    \lim_{k \to \infty} g_k B^+ \in y_{i_1}\left(\lim_{k \to \infty} a_{1,k}\right) \overline{U^-_{s_{i_1} w^J,>0} U^+_{J,\geq 0}\,^J B^+/\,^J B^+}.
    \]
    Since $s_{i_1} w^J < w^J$ and $s_{i_1} w^J \in W^J$, the claim follows by induction.
    
If $\lim_{k \to \infty} a_{1,k} = \infty$, using the identity:
    \[
    y_i(a) = x_i(a^{-1}) \dot{s}_i \alpha^\vee_i(a) x_i(a^{-1}),
    \]
    we obtain:
    \[
    g_k\,^J B^+/\,^J B^+ \in x_{i_1}(a_{1,k}^{-1}) \dot{s}_{i_1} U^-_{s_{i_1} w^J,>0} U^+_{J,\geq 0}\,^J B^+/\,^J B^+.
    \]
    By the induction hypothesis, we have:
    \begin{align*}
        g\,^J B^+/\,^J B^+ &:= \lim_{k \to \infty} g_k\,^J B^+/\,^J B^+ \in \dot{s}_{i_1} G^J_{(v',w'),>0}\,^J B^+/\,^J B^+ \\
        &= \dot{s}_{i_1} G^-_{\underline{(v')^J c'}_+,\underline{(w')^J},>0} G^+_{\underline{w'_J}_+,\underline{c^{\prime,-1} v'_J},>0}\,^J B^+/\,^J B^+,
    \end{align*}
    for some $v' \leq^J w'$. Since we only consider limits inside $B^+ \dot{w}\,^J B^+/\,^J B^+$, we have $w' = s_{i_1} w$.

Recall that the natural projection $\pi:G/\JB^+\to G/P_J^+$ preserves the totally nonnegative parts. By \Cref{pass_to_partial_flag},  $(s_{i_1},\underline{(v')^Jc'}_+)$ is a positive subexpression in $\underline{w^J}$. In particular, $s_{i_1}v^Jc>v^Jc$.

If $s_{i_1} v^J \in W^J$, we take $v = s_{i_1} v'$. Then clearly:
    \[
    g\,^J B^+/\,^J B^+ \in G^J_{(v,w),>0}\,^J B^+/\,^J B^+.
    \]

Otherwise, $s_{i_1} v^J = v^J s_j$ and $s_j c' > c'$ for some $j\in J$. Note that:
    \[
    G^+_{\underline{w_J}_+,\underline{c^{\prime,-1} v'_J},>0}\,^J B^+/\,^J B^+ = \dot{c}^{\prime,-1} G^+_{\underline{c' w_J}_+,\underline{v'_J},>0}\,^J B^+/\,^J B^+,
    \]
    and by \cite[Theorem 11.3]{MR04}, this set is independent of the choice of reduced expression.

Let $\underline{v'_J} = (s_{i_1}', s_{i_2}', \cdots, s_{i_m}')$ and $\underline{c' w_J}_+ = (t_1', t_2', \cdots, t_m')$. We have the following three cases:

\begin{enumerate}[label=(\roman*)]
        \item $s_{i_1}' = t_1' = s_j$. Let $v = (v')^J s_j v'_J$. Then:
        \[
        \dot{s}_{i_1} G^-_{\underline{(v')^J c'}_+,\underline{(w')^J},>0} G^+_{\underline{w'_J}_+,\underline{c^{\prime,-1} v'_J},>0}\,^J B^+/\,^J B^+ = G^J_{(v,w),>0}\,^J B^+/\,^J B^+.
        \]
        
        \item $s_{i_1}' = s_j$ and $t_1' = 1$.
        
        \item $s_{i_1}' \neq s_j$, in which case we may assume $s_j v'_J > v'_J$.
\end{enumerate}

We now show that cases (ii) and (iii) lead to contradictions. Suppose we are in case (ii) or (iii). Let $v_J = \min\{s_j v'_J, v'_J\}$. Then there exists $b \geq 0$ such that:
\[
g\,^J B^+/\,^J B^+ \in \dot{s}_{i_1} G^-_{\underline{(v')^J c'}_+,\underline{(w')^J},>0} \dot{c}^{\prime,-1} x_j(b) G^+_{\underline{c' w_J}_+,\underline{v_J},>0}\,^J B^+/\,^J B^+.
\]

For $t > 0$, we have $x_{i_1}(t) g\,^J B^+/\,^J B^+ \in \mathcal{B}^J_{\geq 0}$. Using the identity:
\[
x_i(t) \dot{s}_i = y_i(t^{-1}) x_i(-t) \alpha^\vee_i(t)
\]
and \Cref{springer_fiber_KL} (2), we obtain:
\begin{align*}
x_{i_1}(t)g\,^J B^+/\,^J B^+ &\in y_{i_1}(t^{-1})x_{i_1}(-t)\alpha^\vee_{i_1}(t)G^-_{\underline{(v')^J c'}_+,\underline{(w')^J},>0} \dot{c}^{\prime,-1} x_j(b) G^+_{\underline{c' w_J}_+,\underline{v_J},>0}\,^J B^+/\,^J B^+.
\\&= y_{i_1}(t^{-1})x_{i_1}(-t)G^-_{\underline{(v')^J c'}_+,\underline{(w')^J},>0} \dot{c}^{\prime,-1} x_j(bk_1^2t^2) G^+_{\underline{c' w_J}_+,\underline{v_J},>0}\,^J B^+/\,^J B^+
\\&=y_{i_1}(t^{-1})G^-_{\underline{(v')^J c'}_+,\underline{(w')^J},>0} \dot{c}^{\prime,-1} x_j(bk_1^2t^2-k_2t) G^+_{\underline{c' w_J}_+,\underline{v_J},>0}\,^J B^+/\,^J B^+
\\&\subseteq G^J_{((v')^Js_jv_J,w),\neq0}\JB^+/\JB^+.
\end{align*}
for some constants $k_1\neq0$ and $k_2>0$.

Taking $t > 0$ sufficiently small so that $b k_1^2 t^2 - k_2 t < 0$, we have $$x_{i_1}(t)g\JB^+/\JB^+\in\CB^J_{\geq0}\bigcap \left(G^J_{((v')^Js_jv_J,w),\neq0}\setminus G^J_{((v')^Js_jv_J,w),>0}\right)\JB^+/\JB^+.$$
However, this contradicts Corollary~\ref{positive_with_top_Deodhar}. The proof of Theorem~\ref{thm:J-flag-main} (1) is complete.

\subsection{Inclusion property in general}
With the parametrization result at hand, we are able to show the inclusion property in its full generality.

\begin{proposition} \label{key_assumption_3}
Let $v,w\in W$ such that $v\leq^J w$. For every $r\in W$ such that $v\leq^Jr\leq^J w$, we have 
$$G^J_{(v,w),>0}\JB^+/\JB^+\subseteq\dot r\JU^-\JB^+/\JB^+.$$

\begin{proof}
The proof is almost identical to \Cref{key_assumption_2}. First note that, we have proved that $\CB^J_{v,w,>0} = G^J_{(v,w),>0}\JB^+/\JB^+$. Similar to \Cref{x_action}, we have
$$x_i(a)G^J_{(v,w),>0}\JB^+/\JB^+\subseteq G^J_{(s_i\circ_l^Jv,w),>0}\JB^+/\JB^+.$$

Now we turn to the proof. We argue by induction on $\ell(w)$. For $\ell(w) = 0$, the statement follows from \cite[Lemma 5.1]{BH24}. 

Let $c = \min\{c'\in W_J|w_J\leq c^{-1}v_J\}$. Then $v^Jc\leq w^J$ since $v\leq^J w$. Fix a reduced expression $\underline{w^J} = (s_{i_1},s_{i_2},\cdots,s_{i_n})$. 
Set $w' = s_{i_1}w$, $r' = s_{i_1}\circ_l^J r$, and $v' = s_{i_1}\circ_l^J v$. It follows from \Cref{twisted_downward} that $v'\leq^J r'\leq^J w'$. 

Write $g = g_1g_2\cdots g_nh\in G^J_{(v,w),>0}$. We consider three cases.
\begin{enumerate}[label = (\roman*)]
    \item $r'= r$. Since $v\leq^J r= r'\leq^J w'$, we have $v^Jc\leq (w')^J = s_{i_1}w^J$. Consequently, $g_1\in y_{i_1}(\mathbb R_{>0})$. Similar to (i) in the proof of \Cref{key_assumption_1}, we have  
    $\dot r^{-1}G_{(v,w),>0}^J\subseteq\JU^-\dot r^{-1}G^J_{(v,w'),>0}$.
    \item $r' = s_{i_1}r$ and $g_1 = y_{i_1}(a)$. Similar to (ii) in the proof of \Cref{key_assumption_1}, we have
    $$\dot r^{-1}g\JB^+/\JB^+\in\JU^-(\dot r')^{-1}G_{(v',w'),>0}^J\JB^+/\JB^+.$$
    \item $r' = s_{i_1}r$ and $g_1 = \dot s_{i_1}$. Then $s_{i_1}v^Jc<v^Jc$. We have either $s_{i_1}v^J<v^J$ or $s_{i_1}v^J = v^Js_j$ for some $j\in J$ and $s_jc<c$. In the first case, it is clear that $v' = v$. In the second case, since $\ell(c^{-1}v_J) = \ell(c^{-1})+\ell(v_J)$ and $s_jc<c$, we have $s_jv_J>v_J$, and thus $v' = v$ as well. Therefore, $g' = g_1^{-1}g\in G^J_{(v',w'),>0}$, and we have $\dot r^{-1}g = (\dot r')^{-1}g'\in(\dot r')^{-1}G^J_{(v',w'),>0}$.
\end{enumerate}
The statement follows from induction assumption.
\end{proof}

\end{proposition}

\subsection{Proof of Theorem \ref{thm:J-flag-main} (2) \& (3)} \label{proof_J_main_prop}
Fix $v,w\in W$ such that $v\leq^Jw$. Then $v\leq^J w^J$. We first consider $\overline{\CB^J_{v,w^J,>0}}$.

By \Cref{positive_connected_component}, we have $\overline{\CB^J_{v,w^J,>0}} = \bigsqcup_{v\leq^Jv'\leq^Jw'\leq^J w^J}\CB^J_{v',w',>0}$. Moreover, each $\CB^J_{v',w',>0}$ is a connected component of $\oB^J_{v',w'}(\mathbb R)$. Together with \Cref{thm:J-flag-main}(1) and \Cref{key_assumption_3}, we see that
\[
\CB^J_{v',w',>0} \subseteq \dot{r}\,^J U^-\,^J B^+/\,^J B^+ \quad \text{for all } v' \leq^J r \leq^J w'.
\]
Applying \Cref{thm:J_product} (2) to $\overline{\CB^J_{v,w^J,>0}}$, we have that
$$\overline{\CB^J_{v,w,>0}} = \bigsqcup_{v\leq^Jv'\leq^J w'\leq^J w}\CB^J_{v',w',>0}.$$

Note that, by definition, $\CB^J_{\geq0}$ is a remarkable polyhedral space with regularity property if each $\overline{\CB^J_{v,w,>0}}$ is. We apply \Cref{thm:J_product} again to $\overline{\CB^J_{v,w,>0}}$, and
deduce \Cref{thm:J-flag-main} (2) and (3).

\section{Applications}

\subsection{Double flag variety as twisted product}
The main objects here are the double flag variety and their stratification with respect to $G_{\text{diag}}$-orbits. For the convenience of applying the results from \S\ref{TNN_twsited}, we first introduce another formulation of the double flag varieties.

We consider the action of $B^+$ on $G \times G/B^\pm$ by $b \cdot (g, g' B^\pm)=(g b \i, b g' B^\pm)$. Let
$$\CZ^{+,+} = G\times^{B^+}G/B^+,\ \ \CZ^{+,-} = G\times^{B^+}G/B^-$$ be the quotient spaces of $G \times G/B^\pm$ by the action of $B^+$. 

Define the convolution product $$m: G \times^{B^+} G/B^\pm \to G/B^\pm, \quad (g, g' B^\pm) \mapsto g g' B^\pm.$$

Let $w, u, v \in W$. We define 
$$\oZ^{+, \pm}_{w,v} = (B^+\dot wB^+,B^+\dot vB^\pm/B^\pm), \qquad \oZ^{+, \pm; \, u} = m^{-1}(B^-\dot uB^\pm/B^\pm).$$


Let $\oZ_{w,v}^{+, \pm; \, u} = \oZ^{+, \pm}_{w,v}\cap \oZ^{+, \pm; \, u}$. We have $$\mathcal Z^{+, \pm} = \bigsqcup\oZ_{w,v}^{+, \pm; \, u}$$ as stratified spaces. 

There exists a natural isomorphism of varieties $$\CZ^{+, \pm} \to G/B^+ \times G/B^\pm, \qquad (g, g' B^\pm) \mapsto (g B^+/B^+, g g' B^\pm/B^\pm).$$ We have the stratification $G/B^+=\bigsqcup \, \oB^+_{x, y}$, where $\oB^+_{x,y} = B^+\dot xB^+/B^+\cap B^-\dot yB^+/B^+$ is the corresponding open Richardson variety in $G/B^+$. Similarly, we have the stratification $G/B^-=\bigsqcup \, \oB^-_{x', y'}$, where $\oB^-_{x', y'}=B^- \dot x' B^-/B^- \cap B^+ \dot y' B^-/B^-$ is the corresponding open Richardson variety of $G/B^-$. The variety $G/B^+ \times G/B^\pm$ has a natural stratification $\bigsqcup \oB_{x, y}^+ \times \oB^\pm_{x',y'}$. The isomorphism $\CZ^{+, \pm} \to G/B^+ \times G/B^\pm$ above is not compatible with these stratifications. Instead, the isomorphism takes 
$\oZ^{+, \pm; \, u}_{w,v}$ to $(B^+\dot wB^+/B^+,B^-\dot uB^\pm/B^\pm)\bigcap G_{\text{diag}}\cdot(B^+/B^+,\dot vB^\pm/B^\pm)$. Thus, they serve as the models for the double flag varieties $G/B^+\times G/B^\pm$.

The stratified space $\CZ^{+, +}$ was studied in \cite{BH24}. In this section, we mainly focus on $\CZ^{+, -}$. We will simply write $\CZ$ for $\CZ^{+, -}$, and $\oZ^u_{w, v}$ for $\oZ^{+, -; \, u}_{w, v}$, etc. 

By \cite[Lemma 1.4]{He07}, the set $\{w'v|w'\leq w\}$ contains a unique minimal element (with respect to Bruhat order), we denote this minimal element by $w\circ_l v$ and call it the \textit{left downward Demazure product} of $w$ and $v$. We have the following lemma describing the nonempty pattern of $\oZ^u_{w,v}$.

\begin{lemma}
The subvariety $\oZ^u_{w,v}$ is nonempty if and only if $w\circ_l v\leq u$.

\begin{proof}
Let $w' = w\circ_l v$. Then by \cite[Corollary 1]{BD94}, we have 
$$B^+\dot w'B^-/B^-\subseteq m(\oZ_{v,w}^u)\subseteq\bigsqcup_{w'\leq w''}B^+\dot w''B^-/B^-.$$
We have the equivalence of the following three conditions:
\begin{align*}
w\circ_l v\leq u&\Longleftrightarrow B^-\dot uB^-/B^-\cap B^+\dot w'B^-/B^-\neq\emptyset \\    
&\Longleftrightarrow B^-\dot uB^-/B^-\cap \left(\sqcup_{w'\leq w''}B^+\dot w''B^-/B^-\right)\neq\emptyset,
\end{align*}
and the statement follows.
\end{proof}

\end{lemma}

\subsection{Total positivity in $\CZ^{+,+}$}
In a joint work of the first author with Bao \cite{BH22}, the ``thickening" method was introduced to study the total positivity in $\CZ^{+,+}$. They constructed a larger group $\tilde G$ (see also \S\ref{thickening}) and a locally closed subvariety $\tilde\CZ^{+,+}$ of $\tilde G/\tilde B^+$ that is a fiber bundle over $\CZ^{+,+}$, and the fiber bundle map is compatible with total positivity. By passing to the totally nonnegative flag variety $(\tilde G/\tilde B^+)_{\geq0}$, they deduced the following theorem \cite[Theorem 1]{BH22}. 
\begin{theorem}
The totally nonnegative double flag variety $\CZ^{+,+}_{\geq0}$ is a remarkable polyhedral space with regularity property.
\end{theorem}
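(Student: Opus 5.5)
The plan is to deduce the statement from \Cref{thm:flag-main} applied to a larger Kac--Moody group, following the thickening method. First construct a thickened group $\tilde G$ with index set $\tilde I = I \sqcup I'$, where $I'$ is a second copy of $I$. The generalized Cartan matrix is chosen so that both the Levi subgroup for $I$ and the one for $I'$ are copies of $G$. The remaining entries (the extra nodes and edges) are chosen so that a distinguished Weyl group element $\tilde w_0$ of $\tilde W$ conjugates the copy $I$ into $I'$. Let $\tilde{\CB}^+ = \tilde G/\tilde B^+$. We then define a locally closed subvariety $\tilde\CZ^{+,+}\subseteq \tilde{\CB}^+$, a union of open Richardson varieties $\oB^+_{\tilde v,\tilde w}$ of $\tilde G$ whose indices have the form $\tilde w = w\,\tilde w_0\, v'$, with $w$ and $v'$ in the two copies of $W$. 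We then write down an explicit morphism $p\colon \tilde\CZ^{+,+}\to \CZ^{+,+}$ of the form
\[
g_1\dot{\tilde w}_0 g_2 \tilde B^+/\tilde B^+ \mapsto (g_1,\ \iota(g_2)B^+/B^+),
\]
where $\iota$ identifies the second copy of $G$ with $G$.

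Second, we verify three properties of $p$. (a) It is a fiber bundle whose fiber is an affine space. (b) It is stratified: $p^{-1}(\oZ^{+,+;\,u}_{w,v})$ is a single open Richardson variety $\oB^+_{\tilde u,\tilde w}$ of $\tilde G$, with $\tilde u$ and $\tilde w$ determined by $(u,w,v)$, and the closure order on strata matches the Bruhat order on the corresponding pairs. (c) It is compatible with positivity: $p(\tilde{\CB}^+_{\tilde u,\tilde w,>0}) = \CZ^{+,+}_{\geq 0}\cap \oZ^{+,+;\,u}_{w,v}$, and the restriction of $p$ to this cell is a trivial bundle with fiber $\mathbb R_{>0}^N$. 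For (c) the plan is to use the Marsh--Rietsch parametrization of $\tilde{\CB}^+_{\tilde u,\tilde w,>0}$ (\cite{MR04,BH21a}), choosing the reduced expression of $\tilde w$ adapted to the factorization $w\,\tilde w_0\,v'$. In this parametrization the factors coming from $\tilde w_0$ are exactly the fiber coordinates, and the remaining factors give $G_{\geq0}$-type data on the base. Positivity on the base should then follow from Lusztig's identities for $U^\pm_{\geq0}T_{>0}$ together with \Cref{conj_positive}.

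Third, we transfer the topology. We construct a continuous section $s$ of $p$ over $\CZ^{+,+}_{\geq0}$ by fixing the fiber coordinates, for example setting them all to $1$, or by sending the base point to a fixed $T_{>0}$-orbit representative. We then show that $s$ extends continuously to closures and that $p\circ s=\mathrm{id}$. With this, $p$ identifies each closed cell $\overline{\CZ^{+,+}_{\geq0}\cap\oZ^{+,+;\,u}_{w,v}}$ with a closed subset of $\overline{\tilde{\CB}^+_{\tilde u,\tilde w,>0}}$, and it is compatible with the stratifications. By \Cref{thm:flag-main}, the larger closed cell is a regular CW complex, and each of its cells is a connected component of its open Richardson variety. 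Pulling back along $s$ gives the three defining properties of a remarkable polyhedral space: connected components, cells of dimension $\dim\oZ$, and closure relations. Regularity should then follow from a Poincar\'e-type argument, as in \Cref{thm:poincare}, once we know that the face poset of $\CZ^{+,+}_{\geq0}$ is an interval poset of the kind covered by \Cref{Weyl_shellable}. Alternatively, if the product structure $\sigma_r$ of \Cref{thm:product} descends along $p$, we can apply \Cref{thm:product} directly on the base.

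The main obstacle is step (c) together with the closure behaviour. We must show that the Hausdorff closure of the positive part commutes with $p$, namely
\[
p\bigl(\overline{\tilde{\CB}^+_{\tilde u,\tilde w,>0}}\cap \tilde\CZ^{+,+}\bigr)=\overline{p(\tilde{\CB}^+_{\tilde u,\tilde w,>0})}.
\]
Since $\tilde\CZ^{+,+}$ is only locally closed, limits could a priori escape to strata outside it, or the fiber coordinates could degenerate. To control this I would first try to prove that $p$ is proper on $\overline{\tilde{\CB}^+_{\geq0}}\cap\tilde\CZ^{+,+}$ relative to the base. The tool for this is the $T_{>0}$-equivariance of $p$ combined with a contracting cocharacter acting only on the fiber, which lets one normalize the fiber coordinates along a convergent sequence.
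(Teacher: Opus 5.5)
\textbf{Gap: the choice of $\tilde w_0$ makes $p$ ill-defined.} Your hypothesis that $\tilde w_0$ conjugates $I$ into $I'$ is meant to let $B^+$ pass across $\dot{\tilde w}_0$ into the second copy $G'$, so that $g_1\dot{\tilde w}_0 g_2$ sees the balancing $(g_1b,b^{-1}g_2)$ of $G\times^{B^+}G/B^+$. But this moves far more than $B^+$. If $\tilde w_0^{-1}(\alpha_i)$ is a root of the second copy for every $i\in I$, then so is $\tilde w_0^{-1}(-\alpha_i)$. Hence $\dot{\tilde w}_0^{-1}U_{\pm\alpha_i}\dot{\tilde w}_0\subseteq G'$, and so $\dot{\tilde w}_0^{-1}G\,\dot{\tilde w}_0\subseteq G'\tilde T$. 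Consequently, for every $h\in G$, not only for $h\in B^+$,
\[
g_1h\,\dot{\tilde w}_0\,g_2\tilde B^+=g_1\,\dot{\tilde w}_0\bigl(\dot{\tilde w}_0^{-1}h\,\dot{\tilde w}_0\bigr)g_2\tilde B^+ .
\]
So $G\dot{\tilde w}_0G'\tilde B^+/\tilde B^+=\dot{\tilde w}_0G'\tilde B^+/\tilde B^+$ is a single copy of the flag variety. The point $g_1\dot{\tilde w}_0g_2\tilde B^+$ remembers only the class of $(\dot{\tilde w}_0^{-1}g_1\dot{\tilde w}_0)g_2$, never $g_1B^+$, so $p$ is not well defined. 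On the Weyl group side, $w\tilde w_0v'=\tilde w_0\,(\tilde w_0^{-1}w\tilde w_0)\,v'$ depends only on a product in $W'$. The indices $w\tilde w_0v'$ therefore cannot separate the pairs $(w,v)$ labelling the strata $\oZ^{+,+;\,u}_{w,v}$. Steps (a)--(c) and the topological transfer have nothing to act on, and there is no affine-space fiber built from the letters of $\tilde w_0$.

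\textbf{What a working thickening needs.} The paper does not reprove this statement; it quotes \cite[Theorem 1]{BH22}. The thickening there, recalled in \S\ref{thickening}, has the opposite property: the new element must be as far as possible from normalizing $W$.
One adds a single node $\infty$ with $\tilde a_{i,\infty}=\tilde a_{\infty,i}=-2$ for all $i\in I$. No $s_i$ commutes with $s_\infty$, so $W\cap s_\infty Ws_\infty=\{1\}$, and $(w,v)\mapsto ws_\infty v$ is injective and length additive.
Instead of the fixed point $\dot s_\infty$, one uses $g_1y_\infty(a)g_2\tilde B^+/\tilde B^+$ with $a\neq 0$. Here $U^+$ commutes with $y_\infty(a)$ and $T$ only rescales $a$; this realizes the balancing without transporting $G$ anywhere. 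The fiber is the single parameter $a$, so the positive cells of $\tilde G/\tilde B^+$ indexed by $(u,ws_\infty v)$ carry exactly one extra coordinate $a>0$.

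\textbf{Closures and the face poset.} The closure issue you raise is handled exactly as in \S\ref{positive_Z} for $\CZ^{+,-}$, with no properness or section argument. The $\mathbb R_{>0}$-action through $\alpha^\vee_\infty$ makes $\overline{\tilde\CB^+_{u,ws_\infty v,>0}}\cap\tilde\CZ^{+,+}$ a principal $\mathbb R_{>0}$-bundle over the closed cell of $\CZ^{+,+}_{\geq0}$. This intersection is the closed ball from \Cref{thm:flag-main} with closed subsets removed, hence a manifold with boundary. Finally, the face poset of $\CZ^{+,+}_{\geq 0}$ is not covered by \Cref{Weyl_shellable}. It is a convex subposet of the interval poset of $\tilde W$, and its thinness and shellability after adjoining $\hat 0$ need their own argument. \cite[Theorem 4.4]{BH22} gives this via an EL-labeling, as in \Cref{Q_thin} and \Cref{prop:shellable_Q}.
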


In our study of the totally nonnegative part of $\mathcal Z^{+,-}$, we use a similar method, except that the fiber bundle is a subvariety of some twisted flag variety $\tilde\CB^{\tilde J}$.

\subsection{The thickening map} \label{thickening}
Consider a group $G$ and its double flag variety $\CZ$. The goal of this subsection is to construct a Kac-Moody group $\tilde G$ and a subvariety $\tilde\CZ$ of some twisted flag variety $\tilde\CB^{\tilde J}$ of $\tilde G$, such that $\tilde\CZ$ is a trivial fiber bundle over $\mathcal Z$ with fiber $\mathbb C^\times$.

\subsubsection{The thickening group} The thickening group was constructed in \cite{BH22}. We recall the construction here.

Let $I$ be the index set of simple roots of $G$. Let $\tilde I = I\sqcup\{\infty\}$. The
Dynkin diagram of $\tilde I$ is obtained from the Dynkin diagram of $I$ by adding an edge $i\Leftrightarrow\infty$
for every $i\in I$. In other words, the generalized Cartan matrix
$\tilde A = (\tilde a_{i,j})_{i,j\in\tilde I}$ is defined as follows:
\begin{itemize}
    \item $\tilde a_{i,j} = a_{i,j}$ for $i,j\in I$;
    \item $\tilde a_{i,\infty} = \tilde a_{\infty,i} = -2$ for $i\in I$;
    \item $\tilde a_{\infty,\infty} = 2$.
\end{itemize}

Let $\tilde G$ be the minimal Kac-Moody group of simply connected type associated to $(\tilde I, \tilde A)$
and $\tilde W$ be its Weyl group. Let $\tilde W_I$ be the parabolic subgroup of $\tilde W$ generated by simple reflections whose index comes from $I$. Let  $\tilde L_I$ be the standard Levi subgroup of the parabolic subgroup $\tilde P_I^+$ of $\tilde G$ that corresponds to $I$. We abuse notations and denote the natural identifications
$$i: W\to\tilde W_I\text{ and }i: G\to \tilde L_I.$$
We fix a pinning $(\tilde T,\tilde B^+,\tilde B^-,x_i,y_i;i\in\tilde I)$ of $\tilde G$ that is compatible with the pinning of $G$ via the identification $i$. We define the thickening maps
$$th : W^2\to \tilde W,\ \  (w,v)\to ws_{\infty}v,$$
$$th:\mathbb C^\times\times G^2\to\tilde G,\ \ (a;g_1,g_2)
\to g_1y_\infty(a)g_2.$$

Note that, $w_1\leq w_2$ and $v_1\geq v_2$ if and only if $th(w_1,v_1)\leq^I th(w_2,v_2)$.

\subsubsection{A fiber bundle}
We regard $I$ as a subset of $\tilde I$, and consider the $I$-twisted flag variety $\tilde{\mathcal B}^{I}$.

\begin{proposition} \label{fiber_bundle}
Set
$$\tilde\CZ := \tilde P_I^-\IB^+/\IB^+\bigcap\tilde P^+_I(\tilde B^+\dot s_\infty\tilde B^+)\tilde P_I^+\IB^+/\IB^+\subseteq\tilde{\mathcal B}^I.$$
Then
\begin{enumerate}
    \item $\tilde\CZ = \bigsqcup_{i(u)\leq^I th(w,v)}\mathring{\tilde{\CB}}^I_{i(u),th(w,v)}$ is a locally closed subvariety of $\tilde{\mathcal B}^I$;
    \item there exist an isomorphism $s:\mathbb C^\times\times\CZ\xrightarrow[]{\sim}\tilde\CZ$ and a projection $f:\tilde\CZ\to\CZ$ realizing $\tilde\CZ$ as a trivial fiber bundle over $\CZ$.
    \item the isomorphism $s$ sends $\mathbb C^\times\times\oZ_{w,v}^u$ to $\mathring{\tilde{\CB}}^I_{i(u),th(w,v)}$, and the projection $f$ sends $\mathring{\tilde{\CB}}^I_{i(u),th(w,v)}$ to $\oZ_{w,v}^u$.
\end{enumerate}

\begin{proof}
It follows from \cite[Corollary 1]{BD94} that
$$\tilde P_I^+\dot s_\infty\tilde P_I^+ = \left(\bigsqcup_{w\in W}\tilde B^+\dot w\dot s_\infty\tilde B^+\right)\left(\bigsqcup_{v\in W}\tilde B^+\dot v\IB^+\right) = \bigsqcup_{w,v\in W}\tilde B^+\dot w\dot s_\infty\dot v\IB^+.$$
Since $\tilde P_I^-\IB^+ = \bigsqcup_{u\in W}\tilde B^-\dot u\IB^+$, we have the decomposition of $\tilde Z$ as in part (1).

Let $\CZ^\prime = \tilde P^+_I\tilde P^+_\infty\tilde P^+_I\IB^+/\IB^+$. Then $\CZ^\prime$ is closed in $\tilde\CB^I$. Moreover, the subset $\tilde P^+_I(\tilde B^+\dot s_\infty\tilde B^+)\tilde P_I^+\IB^+/\IB^+$ is open in $\CZ^\prime$. This shows that $\CZ$ is a locally closed subvariety of $\CB^I$ and completes the proof of part (1).

An element in $\tilde B^+\dot s_\infty\tilde B^+/\tilde B^+$ is either of the form $y_\infty(a)\tilde B^+/\tilde B^+$ for some $a\neq0$ or of the form $\dot s_\infty\tilde B^+/\tilde B^+$. Thus every element in 
$$\tilde P^+_I\dot s_\infty\tilde P_I^+\IB^+/\IB^+ = G(\tilde B^+\dot s_\infty\tilde B^+)\tilde P_I^+/\IB^+$$ can be written as $g_1yg_2\IB^+/\IB^+$, where $g_1,g_2\in G$ and $y=y_\infty(a)$ for some $a\neq0$ or $y = \dot s_\infty$. 

Suppose that $g_1\dot s_\infty g_2\in\tilde P_I^-\IB^+$. Then 
$$\dot s_\infty g_2\in g_1^{-1}\tilde P_I^-\IB^+\cap \dot s_\infty G\subseteq \tilde P_I^-\IB^+\bigcap\left(\bigsqcup_{v\in W}\tilde B^-\dot s_\infty\dot v\IB^+\right) = \emptyset,$$
a contradiction. Thus, $g_1\dot s_\infty g_2\notin\tilde P_I^-\IB^+$ for any $g_1,g_2\in G$. We conclude that every element in $\tilde\CZ$ is of the form $g_1y_\infty(a)g_2\IB^+/\IB^+$ for $g_1,g_2\in G$ and $a\neq0$. 

Suppose that $(g_1,g_2B^-/B^-)\in\oZ^u_{w,v}$. Then $g_1\in B^+\dot wB^+$, $g_2\in B^+\dot vB^-$ and $g_1g_2\in B^-\dot uB^-$. Note that $g\tilde U_{\tilde P_I^-}g^{-1}\subseteq\tilde U^-$ for $g\in G$. Then 
$$g_1y_\infty(a)g_2\IB^+/\IB^+ = g_1y_\infty(a)g_1^{-1}(g_1g_2)\IB^+/\IB^+\in \tilde B^-\dot u\IB^+/\IB^+.$$

Let $b\in B^+$ such that $bg_2\in \dot vB^-$. Note that $b$ commutes with $y_\infty(a)$, we have
$g_1y_\infty(a)g_2\IB^+/\IB^+ = (g_1b)y_\infty(a)\dot v\IB^+/\IB^+$. Using the identity
$$y_\infty(a) = x_\infty(a^{-1})\dot s_\infty x_\infty(-a)\alpha^\vee_\infty(a),$$
we have
\begin{align*}
g_1y_\infty(a)g_2\IB^+/\IB^+ &= (g_1b)x_\infty(a^{-1})(g_1b)^{-1}(g_1b)\dot s_\infty\dot v\IB^+/\IB^+ \\
&\in\tilde B^+\dot wU^+\dot s_\infty\dot v\IB^+/\IB^+.
\end{align*}
Finally, since $\dot s_\infty^{-1} U^+\dot s_\infty\subseteq\tilde U_{\tilde P_I^+}$, we have
$g_1y_\infty(a)g_2\IB^+/\IB^+\in\tilde B^+\dot w\dot s_\infty\dot v\IB^+/\IB^+$. In particular, $g_1y_\infty(a)g_2\IB^+/\IB+ = g_1'y_\infty(a')g_2'\IB^+/\IB^+$ if and only if $a = a'$ and $(g_1,g_2B^-) = (g_1',g_2'B^-)$ in $\CZ$.

The thickening map
$$th:\mathbb C^\times\times G^2\to\tilde\CZ:(a;g_1,g_2)\to g_1y_\infty(a)g_2\IB^+/\IB^+$$
factors through $\mathbb C^\times\times\CZ$ and defines a stratified isomorphism $s:\mathbb C^\times\times\CZ\xrightarrow[]{\sim}\tilde\CZ$, and the projection $f$ given by 
$$f:\tilde\CZ\to\CZ: g_1y_\infty(a)g_2\IB^+/\IB^+ \to (g_1,g_2B^-/B^-)$$
is also stratified. This proves (2) and (3).
\end{proof}

\end{proposition}

The isomorphism $s:\mathbb C^\times\times\CZ\simeq\tilde\CZ$ and the projection $f:\tilde\CZ\to\CZ$ are both defined over $\mathbb R$.

Combining with \Cref{dimension_formula} and \Cref{KL_closure}, \Cref{fiber_bundle} implies the following corollary.

\begin{corollary}
Let $u,v,w\in W$ such that $w\circ_l v\leq u$. Then
\begin{enumerate}
    \item $\dim \oZ^u_{w,v} = \ell(w)+\ell(u)-\ell(v)$;
    \item the Zariski closure of $\oZ^u_{w,v}$ is $\bigsqcup_{w'\leq w,v\leq v',u'\leq u,w'\circ_l v'\leq u'}\oZ^{u'}_{w',v'}$.
\end{enumerate}

\end{corollary}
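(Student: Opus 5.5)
Both parts will be read off from \Cref{fiber_bundle}, which realizes $\tilde\CZ$ as a trivial $\mathbb C^\times$-bundle over $\CZ$ and sends $\mathbb C^\times\times\oZ^u_{w,v}$ isomorphically onto $\mathring{\tilde{\CB}}^I_{i(u),th(w,v)}$. The twisted-flag results for $\tilde G$ with $J=I$, namely \Cref{dimension_formula} and \Cref{KL_closure}, then translate into statements about $\CZ$. Throughout, write $\tilde w=th(w,v)=ws_\infty v$ and $\tilde u=i(u)$.

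\textbf{Part (1).} Since $s$ is an isomorphism,
$$\dim\oZ^u_{w,v}=\dim\mathring{\tilde{\CB}}^I_{\tilde u,\tilde w}-1=\ell^I(\tilde w)-\ell^I(\tilde u)-1.$$
I then compute the $I$-lengths. As $u\in\tilde W_I$, we have $\tilde u^I=1$ and $\tilde u_I=u$, so $\ell^I(\tilde u)=-\ell(u)$. For $\tilde w$, the element $ws_\infty$ is a minimal coset representative in $\tilde W^I$. Indeed, $ws_\infty s_i>ws_\infty$ for $i\in I$, because $(ws_\infty)(\alpha_i)=w(\alpha_i+2\alpha_\infty)$ has positive $\alpha_\infty$-coefficient and hence is positive. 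Also $\ell(ws_\infty)=\ell(w)+1$. This gives $\tilde w^I=ws_\infty$ and $\tilde w_I=v$, so $\ell^I(\tilde w)=\ell(w)+1-\ell(v)$. The dimension is therefore $\ell(w)+1-\ell(v)+\ell(u)-1=\ell(w)+\ell(u)-\ell(v)$. Nonemptiness, i.e. $\tilde u\le^I\tilde w$ exactly when $w\circ_l v\le u$, follows from the preceding lemma together with \Cref{fiber_bundle}(3) and \Cref{nonempty_pattern}.

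\textbf{Part (2).} Take Zariski closures in $\tilde\CZ$. By \Cref{KL_closure},
$$\overline{\mathring{\tilde{\CB}}^I_{\tilde u,\tilde w}}\cap\tilde\CZ=\bigsqcup\mathring{\tilde{\CB}}^I_{\tilde u',\tilde w'},$$
where the union runs over $\tilde u\le^I\tilde u'\le^I\tilde w'\le^I\tilde w$ with the stratum lying in $\tilde\CZ$. By \Cref{fiber_bundle}(1) those strata are exactly $\tilde u'=i(u')$ and $\tilde w'=th(w',v')$. I translate the order conditions as follows. For elements of $\tilde W_I$, $\le^I$ is the reverse Bruhat order, so $i(u)\le^I i(u')$ iff $u'\le u$. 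By the remark after the thickening map, $th(w',v')\le^I th(w,v)$ iff $w'\le w$ and $v\le v'$. The middle condition $i(u')\le^I th(w',v')$ is the nonemptiness condition $w'\circ_l v'\le u'$. Since $s$ is a stratified isomorphism with a $\mathbb C^\times$ factor, closures correspond: the closure of $\mathbb C^\times\times\oZ^u_{w,v}$ is $\mathbb C^\times$ times the closure of $\oZ^u_{w,v}$. Applying $f$ then gives the stated decomposition.

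\textbf{Main obstacle.} The delicate step is this last passage. \Cref{KL_closure} computes closures in all of $\tilde\CB^I$, while $\tilde\CZ$ is only locally closed. Closure within $\tilde\CZ$ is therefore the intersection with $\tilde\CZ$, which is why the union is restricted to strata of the form $(i(u'),th(w',v'))$. One must also confirm that the equivalence $i(u)\le^I i(u')\iff u'\le u$ really follows from the definition of $\le^I$ when both elements lie in $\tilde W_I$, where it reduces to the opposite order with the witness $u$ in that definition taken to be the identity.
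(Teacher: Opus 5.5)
Your proposal is correct and follows the paper's route: the paper obtains this corollary simply by combining \Cref{fiber_bundle} with \Cref{dimension_formula} and \Cref{KL_closure}. Your computations supply the details the paper leaves implicit. These are $ws_\infty\in\tilde W^I$, hence $\ell^I(ws_\infty v)=\ell(w)+1-\ell(v)$ and $\ell^I(i(u))=-\ell(u)$; the observation that closure in the locally closed $\tilde\CZ$ is the ambient closure intersected with $\tilde\CZ$; and the translation of the three $\le^I$ conditions, where the witness is forced to be $1$ for elements of $\tilde W_I$.
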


This also shows that $\CZ = \bigsqcup\oZ^u_{w,v}$ is a stratification.

\subsection{Thinness and Shellability} 
The poset of the stratification of $\mathcal Z$ is
$$Q = \{(w,v,u)\in W^3|w\circ_l v\leq u\},$$
where $(w',v',u')\leq (w,v,u)$ if $w'\leq w$, $v\leq v'$ and $u'\leq u$. Let $\hat Q = Q\sqcup\{\hat0\}$ be the augmented poset with a minimum adjoined. This subsection is devoted to study some combinatorial properties of $\hat Q$.

Let $(\tilde Q^I,\leq_{\tilde Q^I})$ be the poset of intervals in $(\tilde W,\leq^I)$ with respect to inclusion. Namely, 
$$\tilde Q^I = \{[x,y]\in\tilde W\times\tilde W|x\leq^I y\}, $$
and $[x_1,y_1]\leq_{\tilde Q^I}[x_2,y_2]$ if $x_2\leq^I x_1\leq^Iy_1\leq^Iy_2$ in $\tilde W$.

\begin{proposition} \label{Q_thin}
The poset $\hat Q$ is pure and thin.

\begin{proof}
We define
$$h:Q\to\tilde Q^I:(w,v,u)\to [i(u),th(w,v)] = [u,ws_\infty v].$$
The map $h$ is injective and preserves partial orders. We may identify $Q$ with
$$\text{im}(h) = \{[x,y]\in\tilde Q|x\in W, y\in Ws_\infty W\}.$$
It is easy to see that $\text{im}(h)$ is a convex sub-poset of $\tilde Q$, and thus $Q$ is pure and thin. It is also clear that $\hat Q$ is pure.

It remains to show that, for every $q\in Q$ such that the rank of the interval $[\hat 0,q]$ is $2$, there are exactly $2$ elements in $Q$ that are less than $q$. Suppose that $q = (w,v,u)$. Then $\ell(w)+\ell(u)-\ell(v)=1$. Let $(w',v',u')\in Q$ such that $(w',v',u')<(w,v,u)$. Then $w'\leq w$, $v'\geq v$ and $u'\leq u$. Thus $w\circ_l v\leq w'\circ_l v'\leq u'\leq u$. Since $\ell(u) - \ell(w\circ_l v)\leq 1$, we have $u' = w'\circ_l v'$.

Since $w\circ_l v\leq u$, there exists a minimal $c\in W$ such that $c\leq w$, $cv\leq u$ and $\ell(cv) = \ell(v)-\ell(c)$. We have either $\ell(w)-\ell(c)=1$ and $u = cv$, or $w = c$ and $\ell(u)-\ell(cv)=1$. Similarly, there exists a minimal $c'\in W$ such that $c'\leq w'$, $c'v' \leq u'$ and $\ell(c'v') = \ell(v')-\ell(c')$.

Suppose first that $u = cv$. Then $u = u' = c'v' = cv$ and $\ell(w) -\ell(c) = 1$. Since $c'v' = cv$ and $v\leq v'$, we must have $c\leq c'$. Then $c\leq c'\leq w'\leq w$ implies either $c = c' = w'$ or $c' = w' = w$. We have either $(w',v',u') = (c,v,u)$ or $(w',v',u') = (w,w^{-1}u,u)$.

Finally, we suppose that $\ell(u)-\ell(cv) = 1$. Then $c = w$. By assumption, for any $c''<w$, we have $c''v\nleq u$. Since $v'\geq v$, we must have $c = c'=w' = w$. Since $c'v' = wv' = u'$, we have either $(w',v',u') = (w,w^{-1}u,u)$ or $(w',v',u') = (w,v,wv)$.
\end{proof}

\end{proposition}

The rest of this subsection is devoted to prove that $\hat Q$ is shellable. The augmented poset of stratification of $\CZ^{+,+}$ was shown to be shellable in \cite[Theorem 4.4]{BH22}. We follow the method there.

\subsubsection{EL-labeling} \label{EL-labeling}
Suppose that $P$ is a pure poset. An \textit{edge labeling} of $P$ is a map $\lambda$ from the set of all covering relations in $P$ to a poset $\Lambda$. The labeling $\lambda$ sends the maximal chain $(y=z_0\gtrdot z_1\gtrdot z_2\gtrdot\cdots\gtrdot z_n=x)$ of an interval $[x,y]$ of $P$ to a tuple $(\l(z_0,z_1),\l(z_1,z_2),\cdots,\l(z_{n-1},z_n))$ in $\Lambda$. A maximal chain is called increasing if the associated tuple of $\Lambda$ is increasing. An edge labeling of $P$ is called an \textit{EL-labeling}
(edge lexicographical labeling) if
for every interval $[x,y]$, there exists a unique increasing maximal chain, and this maximal chain is lexicographically minimal among all maximal chains of $[x,y]$. If a pure poset admits an EL-labeling, then it is shellable by \cite[Theorem 3.3]{BW82}. We say that such a poset is \textit{EL-shellable}.

Consider a Coxeter system $(W,I)$. Let $J\subseteq I$ and $Q^J$ be the poset of intervals in $(W,\leq^J)$ (with respect to inclusion of intervals). Let $\hat Q^J = Q^J\sqcup\{\hat 0\}$ with $\hat 0$ an adjoined minimal element. We consider EL-labelings on $W$ and $Q^J$ similar to \cite[\S4.5]{BH22}. Denote $\lessdot^J$ the covering relation of $(W,\leq^J)$.

Let $T$ be the set of all reflections in $W$. A \textit{reflection order} is a total order $\prec$ on $T$ such that for every $s,t\in T$, we have either $t\prec sts\prec tstst\prec\cdots $ or $s\prec tst\prec ststs\prec\cdots$. By \cite[\S3]{Dy92}, for $w_1\lessdot^J w_2$, we have $w_2w_1^{-1}\in T$, and the labeling $\lambda_W(w_1\lessdot^J w_2) = w_2w_1^{-1}$ is an EL-labeling on $(W,\leq^J)$.

Now, we consider a labeling for $Q^J$. Fix a reflection order $\prec$ on $T$. Let
$$\Lambda = \{(t,?)|t\in T, ?\in\{l,r\}\}\sqcup\{\emptyset\}.$$
We define a total order $\prec_\Lambda$ on $\Lambda$ such that $(t_1,r)\prec_\Lambda\emptyset\prec_\Lambda(t_2,l)$for every $t_1,t_2\in T$, and that $(t_1,?)\prec_\Lambda(t_2,?)$ if $t_1\prec t_2$ for $?\in\{l,r\}$.

Denote the covering relation in $\hat Q^J$ by $\lessdot_{\hat Q^J}$. Similar to \cite{Wil07}, for $q_1,q_2\in\hat Q^J$ we have $q_1\lessdot_{\hat Q^J} q_2$ if and only if one of three situations holds, for each of which we attach a label.
\begin{itemize}
    \item $q_1 = [x,y_1]$, $q_2 = [x,y_2]$ and $y_1\lessdot^J y_2$ in $W$, and we take $\lambda_{\hat Q^J}(q_1\lessdot_{\hat Q^J} q_2) =( \lambda_W(y_1\lessdot^J y_2),r) = (y_2y_1^{-1},r)$;  
    \item $q_1 = [x_1,y]$, $q_2 = [x_2,y]$ and $x_2\lessdot^J x_1$ in $W$, and we take $\lambda_{\hat Q^J}(q_1\lessdot_{\hat Q^J} q_2) = (\lambda_W(x_2\lessdot^J x_1),l) = (x_1x_2^{-1},l)$;  
    \item $q_1 = \hat 0$ and $q_2  = [x,x]$ for some $x\in W$, and we take $\lambda_{\hat Q^J}(q_1\lessdot_{\hat Q^J} q_2) = \emptyset$.
\end{itemize}
Similar to \cite{Wil07} and \cite[\S4.5.2]{BH22}, $\lambda_{\hat Q^J}$ defines an EL-labeling on $\hat Q^J$.

\subsubsection{Shellability of $\hat Q$} \label{sec:shellable_Q}
We shall construct an EL-labeling on $\hat Q$ to show that, for every $q\in \hat Q$, the interval $[\hat 0, q]$ is EL-shellable, and thus shellable. 

First, we introduce some notations and results on the combinatorics of Coxeter groups. Recall that we have a Coxeter system $(W,I)$ for $G$ and another one $(\tilde W,\tilde I)$ for $\tilde G$. We regard $I$ as a subset of $\tilde I$. Let $\tilde Q^I$ be the poset of intervals in $(\tilde W,\leq^I)$. Let $\check{\tilde Q}^I :=\tilde Q^I\sqcup\{\check 0\}$ be the augmented poset with $\check 0$ an adjoined minimal element.

We remark that the covering relations $\lessdot^I$ within $Ws_\infty W\cup W$ have a simple description. Similar to \cite[Lemma 4.3]{BH21b}, we have
\begin{enumerate}[label = (\alph*)]
    \item for $y\in Ws_\infty W$, the covering relation $x\lessdot^I y$ is of one of the following forms
    \begin{itemize}
        \item $y = ws_\infty v$, $x = w's_\infty v$, and $w'\lessdot w$;
        \item $y = ws_\infty v$, $x = ws_\infty v'$, and $v\lessdot v'$;
        \item $y = ws_\infty v$, $x = wv$, when $\ell(wv) = \ell(v)-\ell(w)$.
    \end{itemize}
\end{enumerate}

Now we consider a specific EL-labeling on $\check{\tilde Q}^I$. By \cite[(2.3)]{Dy93}, there is a reflection order on $\tilde T$ such that
$$t\prec t',\text{ for }t\in \tilde T\cap\tilde W_I,\text{ and }t'\in \tilde T\setminus\tilde W_I.$$
By \cite[\S4.5.3 (a)]{BH22}, this property and \S\ref{sec:shellable_Q} (a) together imply that
\begin{enumerate}[label = (\alph*)]
\setcounter{enumi}{1}
    \item for $y\in Ws_\infty W$, $y_1\lessdot^I y$ and $y_2\lessdot^I y$ with $y_1\in Ws_\infty W$ and $y_2\in W$, we have $yy_1^{-1}\prec yy_2^{-1}$ in $\tilde T$.
\end{enumerate}
This reflection order gives an EL-labeling $\tilde\lambda$ on $\check{\tilde Q}^I$ with label set $\Lambda$ as in \S\ref{EL-labeling}. 

Before we prove the main result of this subsection, we have the following statement \cite[Proposition 2.5]{Bjo80} to check whether a labeling is an EL-labeling or not.

\begin{proposition}\label{check_EL}
Let $\lambda$ be an edge labeling on a pure poset $P$ such that every interval $[x,y]$ contains a unique increasing maximal chain, denoted by $ch(y,x)=(y = z_0\gtrdot z_2\gtrdot\cdots\gtrdot z_n = x)$. Then such labeling is an EL-labeling if and only if for any such $ch(y,x)$ and $y\gtrdot y'\geq x$ we have $\lambda(y,z_1)\prec\lambda(y,y')$.
\end{proposition}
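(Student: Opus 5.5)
The plan is to argue directly from the definitions, with induction on the length of $[x,y]$ for the substantial direction. Both directions use one observation. If $ch(y,x)=(y=z_0\gtrdot z_1\gtrdot\cdots\gtrdot z_n=x)$ is the unique increasing maximal chain of $[x,y]$, then its tail $(z_1\gtrdot\cdots\gtrdot z_n)$ is an increasing maximal chain of $[x,z_1]$. By the uniqueness hypothesis applied to $[x,z_1]$, this tail is $ch(z_1,x)$.

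I read ``lexicographically minimal'' in the definition of an EL-labeling as strict: the increasing chain is lexicographically smaller than every other maximal chain of the interval. I will also use that the labels of the down-edges $y\gtrdot y'$ out of a fixed $y$ are pairwise distinct. This holds for every labeling used in the paper, since $\lambda_W$ and the labelings built from it label $y\gtrdot y'$ by the reflection $yy'^{-1}$, which determines $y'$. Making these two conventions explicit is where I expect the only real care to be needed. Without distinctness, the ``only if'' direction as stated with $\prec$ could fail when $\lambda(y,z_1)=\lambda(y,y')$.

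For the ``only if'' direction, assume $\lambda$ is an EL-labeling and fix $y\gtrdot y'\geq x$ with $y'\neq z_1$. Extend $y\gtrdot y'$ by any maximal chain of $[x,y']$; such chains exist because $P$ is pure. The result is a maximal chain of $[x,y]$ different from $ch(y,x)$. By lexicographic minimality, $\lambda(y,z_1)\preceq\lambda(y,y')$. Distinctness of the labels out of $y$ then upgrades this to $\lambda(y,z_1)\prec\lambda(y,y')$.

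For the ``if'' direction, I will show by induction on the length of $[x,y]$ that $ch(y,x)$ is lexicographically smaller than every other maximal chain $c=(y\gtrdot y'\gtrdot\cdots\gtrdot x)$. The base case, length $\le 1$, is trivial. In the inductive step there are two cases.
\begin{itemize}
\item If $y'\neq z_1$, the hypothesis gives $\lambda(y,z_1)\prec\lambda(y,y')$. The two chains already differ at their first label, so $ch(y,x)$ is lexicographically smaller.
\item If $y'=z_1$, the first labels agree. The remainder of $c$ is a maximal chain of $[x,z_1]$ other than the tail of $ch(y,x)$. By the observation, that tail is $ch(z_1,x)$. The induction hypothesis for the shorter interval $[x,z_1]$ then makes $ch(z_1,x)$ lexicographically smaller than the remainder of $c$.
\end{itemize}
In both cases $ch(y,x)$ is strictly lexicographically minimal. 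Together with the assumed uniqueness of the increasing chain in every interval, this is exactly the EL condition.
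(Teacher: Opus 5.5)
Your ``if'' direction is correct. It is the standard argument: the tail of $ch(y,x)$ is $ch(z_1,x)$, and you induct on the length of $[x,y]$. The paper itself gives no proof and simply cites Bj\"orner.

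The gap is in the ``only if'' direction. You add the hypothesis that the labels of the down-edges out of $y$ are pairwise distinct, and you assert that without it the statement could fail when $\lambda(y,z_1)=\lambda(y,y')$. That assertion is false, and distinctness is not a hypothesis of the proposition. So as written you have proved the ``only if'' direction only for a restricted class of labelings. The information is lost when you extend $y\gtrdot y'$ by an \emph{arbitrary} maximal chain of $[x,y']$. Doing so discards the uniqueness of increasing chains, which is exactly what rules out a tie. It is true that every labeling used in the paper labels down-edges injectively, so your version covers the applications. It still does not prove the proposition as stated.

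The fix is to extend $y\gtrdot y'$ by $ch(y',x)$ itself, with labels $m_2\prec\cdots\prec m_n$; note $n\ge 2$ because $y'\neq z_1$.
\begin{itemize}
\item The resulting chain $c$ differs from $ch(y,x)$, so by uniqueness it is not increasing. Since the label set is totally ordered, this means $m_2\preceq\lambda(y,y')$.
\item Suppose $\lambda(y,z_1)=\lambda(y,y')$. Then lexicographic minimality of $ch(y,x)$ over $c$, even in the weak sense, forces $\lambda(z_1,z_2)\preceq m_2$ at the second position.
\item But $ch(y,x)$ is increasing, so $\lambda(z_1,z_2)\succ\lambda(y,z_1)=\lambda(y,y')\succeq m_2$, a contradiction.
\end{itemize}
Hence the first labels differ, and minimality gives $\lambda(y,z_1)\prec\lambda(y,y')$ with no distinctness assumption. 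The same argument works under either the strict or the weak reading of ``lexicographically minimal.''
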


Now we are ready to prove the EL-shellability of $\hat Q$.

\begin{proposition} \label{prop:shellable_Q}
For every $q\in\hat Q$, the interval $[\hat 0,q]$ is shellable.

\begin{proof}
We define
$$h:\hat Q\to\check{\tilde Q}^I:(w,v,u)\to [i(u),th(w,v)] = [u,ws_\infty v],\ \hat 0\to \check 0.$$
The map identifies $\hat Q$ with a sub-poset of $\check{\tilde Q}^I$, and $h(Q)$ is convex in $\check{\tilde Q}^I$.

We define an edge labeling $\lambda$ on $\hat Q$ by
$$\lambda\left((w',v',u')\lessdot (w,v,u)\right) = \tilde\lambda\left([i(u'),th(w',v')]\lessdot_{\check{\tilde Q}^I}[i(u),th(w,v)]\right),$$
$$\lambda\left(\hat 0\lessdot (w,v,u)\right) = \text{ the first edge label in the chain }ch([i(u),th(w,v)],\check 0)\subseteq\check{\tilde Q}^I\text{ via }\tilde\lambda.$$
Since $\tilde\lambda$ is an EL-labeling on $\check{\tilde Q}^I$, for any $(w',v',u')< (w,v,u)$ in $Q$, there is a unique increasing maximal chain from $(w,v,u)$ to $(w',v',u')$ which is lexicographically minimal. It remains to consider the chain from $(w,v,u)$ to $\hat 0$.

By our construction of the EL-labeling for $\check{\tilde Q}^I$ in \S\ref{EL-labeling}, the unique increasing maximal chain from $\check 0$ to $[i(u), th(w,v)]$ in $\check{\tilde Q}^I$ is 
$$ch([i(u),th(w,v)],\check 0) = \left([i(u),th(w,v)]_{\check{\tilde{Q}}^{I}}\gtrdot[i(u),\tilde w_1]_{\check{\tilde{Q}}^{I}}\gtrdot\cdots_{\check{\tilde{Q}}^{I}}\gtrdot[i(u),i(u)]_{\check{\tilde{Q}}^{I}}\gtrdot\check 0\right).$$
Since $\tilde\lambda$ gives an EL-labeling on $\check{\tilde Q}^I$, it follows from \S\ref{sec:shellable_Q} (b) and \Cref{check_EL}  that $ch([i(u),th(w,v)],[i(u),i(u)])\cap h(Q)$ contains a minimal element in $h(Q)$, denoted by $[i(u),th(w_n,v_n)]$. Then by the convexity of $h(Q)$ in $\tilde Q^I$, we have
\begin{align*}
   ch([i(u),th(w,v)],[i(u),i(u)]) &= \left([i(u),th(w,v)]_{\check{\tilde{Q}}^{I}}\gtrdot[i(u),th(w_1,v_1)]_{\check{\tilde{Q}}^{I}}\gtrdot\cdots\right. \\
&\left._{\check{\tilde{Q}}^I}\gtrdot [i(u),th(w_n,v_n)]_{\check{\tilde{Q}}^{I}}\gtrdot\cdots_{\check{\tilde Q}^I}\gtrdot [i(u),i(u)]\right).
\end{align*}
It is clear that $(w,v,u)\gtrdot(w_1,v_1,u)\gtrdot\cdots (w_n,v_n,u)\gtrdot\hat 0$ is a maximal chain from $(w,v,u)$ to $\hat 0$.

By construction, any increasing maximal chain
from $(w,v,u)$ to $\hat 0$ in $\hat Q$ can be extended to an increasing maximal chain from $[i(u),th(w,v)]$ to $\check 0$ in $\check{\tilde Q}^I$. This implies that there is a unique increasing maximal chain from $(w,v,u)$ to $\hat 0$.

Finally, since $\tilde\lambda$ is an EL-labeling on $\check{\tilde Q}^I$, it follows from \Cref{check_EL} that $\lambda$ is an EL-labeling on $\hat Q$.
\end{proof}

\end{proposition}

\subsection{Total positivity in $\mathcal Z$} \label{positive_Z}
The totally nonnegative double flag variety $\CZ_{\geq0}$ is defined as the Hausdorff closure of $(G_{\geq0},G_{\geq0}B^-/B^-)$ in $\CZ$. We now prove the main results on $\CZ_{\geq0}$.

\begin{theorem} \label{Z_parametrization}
For $v,w,u\in W$ such that $w\circ_l v\leq u$, set $$\oZ_{v,w,>0}^u: = \oZ^u_{v,w}\cap\CZ_{\geq0}.$$  Let $c$ be the minimal element such that $c\leq w$ and $v\leq c^{-1}u$. For any choice of reduced expressions $\underline{w}$ and $\underline{c^{-1}u}$, we have an isomorphism $$G^-_{\underline{c}_+,\underline{w},>0}\times G^+_{\underline{v}_+,\underline{c^{-1}u},>0}\xrightarrow{\sim}\CZ_{w,v,>0}^u,\ \ (g_1,g_2)\to (g_1,g_2B^-).$$

\begin{proof}
For $(w,v,u)\in W^3$ such that $i(u)\leq^I th(w,v)$, we have $$\mathring{\tilde\CB}^I_{i(u),th(w,v)}\subseteq\overline{\mathring{\tilde{\CB}}^I_{i(u),th(w,1)}}.$$
Then
$$\tilde\CB^I_{i(u),th(w,v),>0}\subseteq\overline{U^-_{w,>0}y_\infty(\mathbb R_{>0})U^+_{u,>0}\IB^+/\IB^+}.$$
Therefore
$$\tilde\CB^I_{\geq0}\bigcap\tilde\CZ = \overline{G_{\geq0}y_\infty(\mathbb R_{>0})G_{\geq0}\IB^+/\IB^+}\bigcap\tilde\CZ.$$

Recall that we have the isomorphism
$$s:\mathbb R^\times\times\CZ(\mathbb R)\to\tilde\CZ(\mathbb R),\ \ (a;g_1,g_2B^-)\to g_1y_\infty(a)g_2\IB^+/\IB^+,$$
and the fiberation
$$f:\tilde\CZ(\mathbb R)\to\CZ(\mathbb R),\ \ g_1y_\infty(a)g_2\IB^+/\IB^+\to (g_1,g_2B^-/B^-).$$
We have 
\begin{align*}
    s(\mathbb R_{>0};\CZ_{\geq0}) &= \overline{s(\mathbb R_{>0}; G_{\geq0},G_{\geq0}\IB^+/\IB^+)}=\tilde\CB^I_{\geq0}\bigcap\tilde\CZ \\
    &=\bigsqcup_{(w,v,u)\in W^3, i(u)\leq^Ith(w,v)}\tilde\CB^I_{i(u),th(w,v),>0}
\end{align*}
By \Cref{thm:J-flag-main} (1), we have
$$\CB^I_{i(u),th(w,v),>0} = G^-_{\underline{c}_+,\underline{w},>0}y_\infty(\mathbb R_{>0}) G^+_{\underline{v}_+,\underline{c^{-1}u},>0}\IB^+/\IB^+.$$
By \Cref{fiber_bundle}, we have
$$\mathcal Z^u_{w,v,>0} = f\left(\tilde\CB^I_{i(u),th(w,v),>0}\right) = \left(G^-_{\underline{c}_+,\underline{w},>0},G^+_{\underline{v}_+,\underline{c^{-1}u},>0}B^-/B^-\right).$$
The theorem follows.
\end{proof}

\end{theorem}

In particular, $s:\mathbb R^\times\times\CZ(\mathbb R)\to\tilde{\CZ}(\mathbb R)$ restricts to a stratified isomorphism 
$$s:\mathbb R_{>0}\times\CZ_{\geq0}\simeq\tilde\CB^I_{\geq0}\cap\tilde{\CZ}.$$

\begin{theorem} \label{Z_total_positivity}
The totally nonnegative double flag variety $\CZ_{\geq0} = \bigsqcup\CZ^u_{w,v,>0}$ is a remarkable polyhedral space with regularity property. 

\begin{proof}
It follows from \Cref{Z_parametrization} that $\CZ^u_{w,v,>0}\simeq\mathbb R_{>0}^{\ell(w)+\ell(u)-\ell(v)}$. Since $\tilde\CB^I_{i(u),th(w,v),>0}$ is a connected component of $\mathring{\tilde{\CB}}^I_{i(u),th(w,v)}(\mathbb R)$, and $\mathcal Z^u_{w,v,>0} = f\left(\tilde\CB^I_{i(u),th(w,v),>0}\right)$, we have that $\mathcal Z^u_{w,v,>0}$ is a connected component of $\oZ^u_{w,v}(\mathbb R)$. Since $s$ is stratified, we have
\begin{align*}
    s(\mathbb R_{>0};\overline{\CZ_{w,v>0}^u}) &= \overline{\tilde{\CB}^I_{i(u),th(w,v),>0}}\bigcap\tilde\CZ \\
    &=\bigsqcup_{ i(u)\leq^Ii(u')\leq^Ith(w',v')\leq^I th(w,v)}\tilde\CB^I_{i(u'),th(w',v'),>0}
\end{align*}
Applying $f$, we have
$$\overline{\CZ_{w,v>0}^u} = \bigsqcup_{(w',v',u')\leq (w,v,u)}\CZ_{w',v',>0}^{u'}.$$
This shows that $\CZ_{\geq0}$ is a remarkable polyhedral space.

We next prove
\begin{enumerate}[label = (\alph*)]
    \item $\overline{\CZ_{w,v>0}^u}$ is a topological manifold with boundary $\overline{\CZ_{w,v>0}^u}\setminus\CZ^u_{w,v,>0}$.
\end{enumerate}
For $\tilde w\in\tilde W$, we denote $\tilde\CB^I_{\tilde w,\geq0} = \tilde\CB^I_{\tilde w}\cap\tilde\CB^I_{\geq0}$ and $\tilde\CB^{I,\tilde w}_{\geq0} = \tilde\CB^{I,\tilde w}\cap\tilde\CB^I_{\geq0}$. These are closed subsets of $\tilde\CB^I$.

According to \cite[Lemma 1.4]{He07}, there is a unique maximal element in the set $\{(w')^{-1}u'\in W|w'\leq w,u'\leq u\}$. We denote this element by $w^{-1}*v$. It is clear that for every 
$\tilde w\in Ws_\infty W$ such that $i(u)\leq^I\tilde w\leq^I th(w,v)$, we have $th(1,w^{-1}*u)\leq^I\tilde w$. Similarly, for every $\tilde w\in W$ such that $i(u)\leq^I\tilde w\leq^Ith(w,v)$, we have $\tilde w\leq^I i(w\circ_l v)$.
Then  
$$\overline{\tilde\CB_{i(u),th(w,v),>0}^I}\bigcap\tilde\CZ = \overline{\tilde\CB_{i(u),th(w,v),>0}^I}\big\backslash\left(\tilde\CB^{I,th(1,w^{-1}*u)}_{\geq0}\bigcup\tilde\CB^I_{i(w\circ_l v),\geq0}\right)$$
is an open subset of the topological manifold $\overline{\tilde\CB_{i(u),th(w,v),>0}^I}$, and thus is also a topological manifold.
 
Since the interior of $\overline{\tilde\CB^I_{i(u),th(w,v),>0}}$ is $\tilde\CB^I_{i(u),th(w,v),>0}$ and it lies in $\tilde\CZ$, we know that $$\partial\left(\overline{\tilde 
\CB_{i(u),th(w,v),>0}^I}\bigcap\tilde\CZ\right) = \partial\overline{\tilde \CB_{i(u),th(w,v),>0}^I}\bigcap\tilde\CZ = \bigsqcup_{(w',v',u')<(w,v,u)}\tilde \CB_{i(u'),th(w',v'),>0}^I.$$

The isomorphism $s:\mathbb R^\times\times\CZ(\mathbb R)\to\tilde\CZ(\mathbb R)$ restricts to a stratified isomorphism $$\mathbb R_{>0}\times\overline{\CZ^u_{w,v,>0}}\simeq\overline{\tilde\CB_{i(u),th(w,v),>0}^I}\bigcap\tilde\CZ$$
that is $\mathbb R_{>0}$-equivariant, where $\mathbb R_{>0}$ acts on $\mathbb R_{>0}$ via $a.b = a^{-2}b$, on $\CZ$ trivially, and on $\overline{\tilde\CB^I_{i(u),th(w,v)}}\bigcap\tilde\CZ$ via $\alpha^\vee_\infty$. We see that $\overline{\tilde\CB_{i(u),th(w,v),>0}^I}\bigcap\tilde\CZ$ is a principal $\mathbb R_{>0}$-bundle, and 
$$\overline{\CZ^u_{w,v,>0}}\simeq \left(\overline{\tilde\CB_{i(u),th(w,v),>0}^I}\bigcap\tilde\CZ\right)/\mathbb R_{>0}.$$
This proves (a).

Finally, we show that $\overline{\CZ_{w,v>0}^u}$ is a regular CW complex by induction on the dimension of $\CZ_{w,v>0}^u$. When $\dim\CZ_{w,v,>0}^u = 0$, it is a point. Now by induction assumption, $\partial\overline{\CZ_{w,v>0}^u}$ is a regular CW complex. By \Cref{shellable_regular}, \Cref{Q_thin} and \Cref{prop:shellable_Q}, we have that $\partial\overline{\CZ_{w,v>0}^u}$ is homeomorphic to a sphere of dimension $\dim\CZ_{w,v,>0}^u - 1$, and the result follows from (a) and \Cref{thm:poincare}.
\end{proof}

\end{theorem}

\subsection{Link of identity}
Let $(G/T)_{\geq0}$ be the image of $G_{\geq0}$ under the natural projection $G\to G/T$. In this subsection, we consider the reduced double Bruhat cell $L^{w,u} = (B^+\dot wB^+\cap B^-\dot uB^-)/T\subseteq G/T$ and its totally positive part $L^{w,u}_{>0} := L^{w,u}\cap (G/T)_{\geq0}$. Denote the Hausdorff closure of $L^{w,u}_{>0}$ in $(G/T)(\mathbb R)$ by $L^{w,u}_{\geq0}$. The embedding
$$G/T\to\CZ,\ gT/T\to (g,B^-)$$
identifies $L^{w,u}$ with $\oZ^{u}_{w,1}$ and $L^{w,u}_{>0}$ with $\CZ^u_{w,1,>0}$. We write $\mathcal X := \bigsqcup_{w,u\in W}\oZ^u_{w,1}$. Then we have a stratified isomorphism $G/T\xrightarrow[]{\sim}\mathcal X$, which restricts to a stratified isomorphism $(G/T)_{\geq0}\simeq\mathcal X_{\geq0}: = \mathcal X\cap\mathcal Z_{\geq0}$. For $w,u\in W$, we write $\CX_{w,u,>0}:=\CZ^u_{w,1,>0}$, and $\overline{\CX_{w,u,>0}}$ the Hausdorff closure of $\CX_{w,u,>0}$ in $\CX(\mathbb R)$. 

Let $X^{+}$ be the set of dominant weights of $G$, and $X^{++}\subseteq X^+$ the set of regular dominant weights of $G$. For $\lambda \in X^{+}$, let ${}^\omega \Lambda_\lambda$ denote the simple lowest weight $G$-module with lowest weight $-\lambda$, and $\Lambda_\lambda$ denote the simple highest weight $G$-module with highest weight $\lambda$. Let $\xi_{-\lambda}\in{}^\omega\Lambda_\lambda$ be a lowest weight vector, and $\eta_\lambda\in\Lambda_\lambda$ a highest weight vector. 

For $\lambda\in X^{++}$, we consider the tensor product ${}^\omega \Lambda_\lambda(\mathbb R)\otimes_{\mathbb R}\Lambda_\lambda(\mathbb R)$ equipped with an Euclidean norm $\lVert\cdot\rVert$. There is a well-defined embedding
$$\CX\to {}^\omega \Lambda_\lambda(\mathbb R)\otimes_{\mathbb R}\Lambda_\lambda(\mathbb R),\ (g,B^-)\to g\xi_{-\lambda}\otimes g\eta_{\lambda}.$$
Let $w,u\in W$. We define the link of identity in $L^{w,u}_{\geq0}$ to be
$$Lk^{w,u}_{\geq0} := \left\{x\in\overline{\CX_{w,u,>0}}\Big|\lVert x-\eta_\lambda\otimes\xi_{-\lambda}\rVert=1\right\}.$$
For $w',u'\in W$ such that $w'\leq w$, $u'\leq u$, we write $Lk^{w',u'}_{>0} = Lk^{w,u}_{\geq0}\cap\CX_{w',v',>0}$. Since $\CZ_{w',1,>0}^{u'}\simeq\mathbb R_{>0}^{\ell(w')+\ell(v')}$, we have $Lk^{w',u'}_{>0}\simeq\mathbb R_{>0}^{\ell(w')+\ell(u')-1}$.

\begin{theorem}
Suppose that $(w,u)\neq(1,1)$. The space $Lk^{w,u}_{\geq0} = \bigsqcup_{w'\leq w,u'\leq u}Lk^{w',u'}_{>0}$ is a regular CW complex. In particular, it is homeomorphic to a closed ball.

\begin{proof}
Note that $\mathcal X$ is open in $\mathcal Z$. We have that  $\overline{\CZ_{w,1,>0}^u}\bigcap\CX$ is an open subset of the topological manifold $\overline{\CZ_{w,1,>0}^u}$, and thus is also a topological manifold. Since $\CZ_{w,1,>0}^u\subseteq\overline{\CX_{w,1,>0}^u}$, we have   
$$\partial\overline{\CX_{w,u,>0}} = \partial\overline{\CZ^u_{w,1,>0}}\bigcap\CX = \bigsqcup_{w'\leq w,u'\leq u,(w',u')\neq(w,u)}\CX_{w',u',>0}.$$
This implies that $Lk^{w,u}_{\geq0}$ is a topological manifold with boundary 
$$\partial Lk^{w,u}_{\geq0} = \bigsqcup_{w'\leq w,u'\leq u,(w',u')\neq (w,u)}Lk^{w',u'}_{>0}.$$
We show that $Lk^{w,u}_{\geq0}$ is a regular CW complex by induction on $\ell(w)+\ell(u)$. When $\ell(w)+\ell(u)=1$, it follows by a direct calculation that $Lk^{w,u}_{\geq0}$ is a point. Now by induction assumption, $\partial Lk^{w,u}_{\geq0}$ is a regular CW complex. By \Cref{Weyl_shellable} and \Cref{shellable_regular}, we have that $\partial Lk^{w,u}_{\geq0}$ is homeomorphic to a sphere of dimension $\ell(w)+\ell(u)-2$, and the result follows from \Cref{thm:poincare}.
\end{proof}
    
\end{theorem}

\subsection{Connection to canonical basis} \label{sec:canonical_basis}
In this subsection, we assume that $G$ is simply-laced. 

For $\lambda_1,\lambda_2\in X^+$, the canonical basis $\mathbf B({}^\omega\Lambda_{\lambda_1}\otimes\Lambda_{\lambda_2})$ for the tensor product ${}^\omega\Lambda_{\lambda_1}\otimes\Lambda_{\lambda_2}$ was introduced by Lusztig \cite{L92b}. We consider the projective space $\mathbb P({}^\omega\Lambda_{\lambda_1}\otimes\Lambda_{\lambda_2})$ of ${}^\omega\Lambda_{\lambda_1}\otimes\Lambda_{\lambda_2}$. For $v\in {}^\omega\Lambda_{\lambda_1}\otimes\Lambda_{\lambda_2}$, denote $[v]\in\mathbb P({}^\omega\Lambda_{\lambda_1}\otimes\Lambda_{\lambda_2})$ the corresponding line in the projective space. We have a morphism
$$\rho_{\lambda_1,\lambda_2}:\CZ\to\mathbb P({}^\omega\Lambda_{\lambda_1}\otimes\Lambda_{\lambda_2}),\quad (g_1,g_2B^-/B^-)\to [g_1g_2\xi_{-\lambda_1}\otimes g_1\eta_{\lambda_2}].$$
We simply write $V$ for ${}^\omega\Lambda_{\lambda_1}\otimes\Lambda_{\lambda_2}$ and $\rho$ for $\rho_{\lambda_1,\lambda_2}$.

\begin{proposition} \label{prop_canonical_basis}
Let $\mathbb P(V)_{\geq0}$ be the set of lines spanned by a vector whose coordinates with respect to $\mathbf B(V)$ are all nonnegative. Then we have 
$$\rho(\CZ_{\geq0})\subseteq \mathbb P(V)_{\geq0}.$$

\begin{proof}
Let $\mathfrak g$ be the Lie algebra of $G$, and $\{e_i,h_i,f_i\}_{i\in I}$ be the Chevalley generators of the derived subalgebra of $\mathfrak g$, such that $x_i(a) = \exp(ae_i)$ and $y_i(a) = \exp(af_i)$. According to \cite[Theorem 6.5 (2)]{FH25}, for any $b\in\mathbf B(V)$ we have
$$(e_i\otimes 1+1\otimes e_i)\cdot b\in\mathbb N[\mathbf B(V)],\quad (f_i\otimes 1+1\otimes f_i)\cdot b\in\mathbb N[\mathbf B(V)].$$
Therefore, for $a>0$, we have $$(x_i(a),x_i(a))\cdot \mathbb P(V)_{\geq0} = \exp(a(e_i\otimes 1+1\otimes e_i))\cdot\mathbb P(V)_{\geq0}\subseteq\mathbb P(V)_{\geq0},$$
and similarly, $(y_i(a),y_i(a))\cdot\mathbb P(V)_{\geq0}\subseteq\mathbb P(V)_{\geq0}$.
Moreover, every vector in $\mathbf B(V)$ is a weight vector. We see that $\mathbb P(V)_{\geq0}$ is invariant under the action of $G_{\text{diag},\geq0}$.

Since $\xi_{-\lambda_1}\otimes\eta_{\lambda_2}\in\mathbf B(V)$, we have $(e,B^-/B^-)\in\mathbb P(V)_{\geq0}$. Since $\mathbb P(V)_{\geq0}$ is invariant under the action of $G_{\text{diag},\geq0}$, we have $$(G_{\geq0},G_{\geq0}B^-/B^-)\subseteq\mathbb P(V)_{\geq0},$$ and then 
$$\CZ_{\geq0} = \overline{(G_{\geq0},G_{\geq0}B^-/B^-)}\subseteq\mathbb P(V)_{\geq0}.$$
The conclusion follows.
\end{proof}

\end{proposition}

When $\lambda_1$ and $\lambda_2$ are regular dominant, $\rho$ is an embedding, and \Cref{prop_canonical_basis} verifies one side inclusion of \Cref{conjecture_canonical_basis}.

\end{document}